\documentclass[10pt]{amsart}

\usepackage{amsfonts,amsmath,latexsym,amssymb,verbatim,amsbsy,amsthm}
\usepackage{dsfont,bm}
\usepackage{multirow}
\usepackage{enumitem}
\usepackage{wrapfig}

\usepackage[top=1in, bottom=1in, left=1in, right=1in]{geometry}

\usepackage[dvipsnames]{xcolor} 

\usepackage{tikz}

\usepackage[colorlinks=true, pdfstartview=FitV, linkcolor=RoyalBlue, citecolor=ForestGreen, urlcolor=blue]{hyperref}

\theoremstyle{plain}
\newtheorem{THEOREM}{Theorem}[section]

\newtheorem{theorem}[THEOREM]{Theorem}

\newtheorem{lemma}[THEOREM]{Lemma}
\newtheorem{proposition}[THEOREM]{Proposition}

\theoremstyle{definition}

\newtheorem{definition}[THEOREM]{Definition}

\theoremstyle{remark}

\newtheorem{remark}[THEOREM]{Remark}

\newtheorem{example}[THEOREM]{Example}

\newcommand{\thm}[1]{Theorem~\ref{#1}}
\newcommand{\lem}[1]{Lemma~\ref{#1}}
\newcommand{\defin}[1]{Definition~\ref{#1}}
\newcommand{\rem}[1]{Remark~\ref{#1}}

\newcommand{\prop}[1]{Proposition~\ref{#1}}

\newcommand{\sect}[1]{Section~\ref{#1}}

\newcommand{\fig}[1]{Figure~\ref{#1}}

\newcommand{\R}{\ensuremath{\mathbb{R}}}   
\newcommand{\T}{\ensuremath{\mathbb{T}}}   

\def \a {\alpha}
\def \b {\beta}
\def \g {\gamma}
\def \d {\delta}
\def \e {\varepsilon}
\def \f {\varphi}

\def \l {\lambda}
\def \n {\nabla}

\def \t {\tau}
\def \th {\theta}

\def \w {\omega}

\def \L {\Lambda}
\def \Th {\Theta}
\def \O {\Omega}

\def \ba {{\bf a}}

\def \bc {{\bf c}}

\def \cA {\mathcal{A}}

\def \cC {\mathcal{C}}

\def \cF {\mathcal{F}}

\def \cK {\mathcal{K}}

\def \cLF {\mathcal{LF}}
\def \cM {\mathcal{M}}

\def \cO {\mathcal{O}}
\def \cP {\mathcal{P}}

\def \orho {{\bar{\rho}}}

\def \oH { \overline{H} }

\def \oS { \overline{S} }

\def \oTh {\overline{\Th}}

\def \oTh { \overline{\Th} }

\def \loc {\mathrm{loc}}

\def \one {{\mathds{1}}}
\def \Lip {\mathrm{Lip}}

\newcommand{\jap}[1]{\left\langle #1 \right\rangle}

\DeclareMathOperator{\supp}{supp} %

\def \lan {\langle}
\def \ran {\rangle}

\def \p {\partial}

\def \ss {\subset}

\def \GL {Gr\"onwall's Lemma}
\def \HI {the H\"older inequality}

\renewcommand{\geq}{\geqslant}

\renewcommand{\leq}{\leqslant}

\def \dv  {\, \mbox{d}v}
\def \dx  {\, \mbox{d}x}

\def \dt  {\, \mbox{d}t}

\def \dy  {\, \mbox{d}y}
\def \dz  {\, \mbox{d}z}

\def \dr  {\, \mbox{d}r}
\def \ds  {\, \mbox{d}s}

\def \dmu  {\, \mbox{d}\mu}

\def \dnu  {\, \mbox{d}\nu}

\def \deta  {\, \mbox{d}\eta}

\def \dtau  {\, \mbox{d}\tau}
\def \drho  {\, \mbox{d}\rho}

\def \ddt  {\frac{\mbox{d\,\,}}{\mbox{d}t}}

\def \dSigma  {\, \mbox{d}\Sigma}

\def \domain {{\O \times \R^n}}
\def \ddomain {{\O^2 \times \R^{2n}}}
\def \dddomain {{\O^3 \times \R^{3n}}}

\def \st {\mathrm{s}}

\def \TV {\mathrm{TV}}

\begin{document}

\title[]{On kinetic alignment models with rough communication kernel}

\author{Roman Shvydkoy}
\email{shvydkoy@uic.edu}

\address{Department of Mathematics, Statistics and Computer Science, University of Illinois at Chicago, Chicago, IL 60607, USA}

\subjclass[2020]{35Q83, 35Q84}

\date{\today}

\keywords{}

\thanks{\textbf{Acknowledgment.}  
The work of RS  was  supported in part by NSF
grant DMS-2405326 and Simons Foundation}

\maketitle

\begin{abstract}
We prove the weak maximum principle and the Cucker-Smale flocking theorem for a general class of  kinetic alignment models  with rough communication kernels. This includes the classical Cucker-Smale and Motsch-Tadmor systems with defining kernel $\phi$ assumed to be merely continuous or even $\phi\in L^\infty$ for solutions in the Lebesgue class $f\in L^1$.  For such models the characteristic flow-map is not well-defined and therefore the classical approaches of Cucker-Smale \cite{CS2007a,CS2007b} or Ha-Liu-Tadmor \cite{HL2009,HT2008} break down. We develop a new method adopted to rough settings  based on Ambrossio's probabilistic representation of solutions and construction of dynamic barriers.  All proofs are performed in the framework of so called bounded communication protocols, a non-smooth variant of regular protocols introduced in \cite{Shv-CPAM, Shv-EA} and representing a broad range of models of collective dynamics.
\end{abstract}




\section{Introduction}

In collective dynamics it is widely believed, see for example \cite{VZ2012} that fine features of communication between agents are not important and in fact may not even be precisely determined. Instead, it is macro features such as locality and the strength of long range interactions that play decisive role in collective outcome.

In the context of the classical Cucker-Smale system introduced in \cite{CS2007a,CS2007b}
\begin{equation}\label{e:CS}
\dot{x}_i = v_i, \quad \dot{v}_i =\sum_{j=1}^N m_j \phi(x_i-x_j) (v_j - v_i),
\end{equation}
this thesis was corroborated by Ha and Tadmor \cite{HT2008} and Ha and Liu \cite{HL2009} by extending the famous Cucker-Smale Flocking Theorem from the class of precise power-law kernels\footnote{We use the Japanese brackets notation $\jap{x} = (1+|x|^2)^{1/2}$.} $\phi(x-y) = \frac{H}{\jap{x-y}^\b}$, $\b \leq 1$ to general radially symmetric kernels with heavy tail:  $\int_0^\infty \phi(r)\dr = \infty$. In fact, assuming $\phi>0$ is smooth, fat-tail, and decreasing, every solution to \eqref{e:CS} aligns exponentially fast to the mean velocity $\bar{v} =\frac{1}{N} \sum_{j=1}^N v_j$, while the flock remains bounded  
\[
\max_{i=1,\ldots,N} |v_i - \bar{v}| \leq C e^{-\d t}, \qquad \max_{i,j=1,\ldots,N} |x_i - x_j| \leq \bar{D}.
\]

The Cucker-Smale theorem, later extended to smooth solutions of the corresponding kinetic \cite{CFRT2010,CFTV2010} and hydrodynamic models \cite{TT2014}, relies crucially on the well-posedness of the corresponding characteristic flow-map in the Hadamard sense -- the characteristics \eqref{e:CS} (or their kinetic and hydrodynamic counterparts) need to be global, unique and continuously dependent on the initial data to enable application of the maximum principle on the velocity components and construction of the Lyapunov function as in \cite{HL2009}, or weighted Fiedler number as in  \cite{CS2007a}. Thus, so far the theory targets models based primarily on smooth communication kernels -- a fine feature that leaves out natural examples such as $\phi(x) = e^{-\sqrt{|x|}}$ for $|x|<1$, and any bounded kernel with jump or cusp discontinuity. Yet such kernels are widely used in opinion dynamics for example \cite{HKmodel,MT2014} and other applications.

 Another cornerstone example of communication adapted to the study of heterogeneous flocks is given by the Motsch-Tadmor model \cite{MT2011}:
\begin{equation}\label{e:MT}
\dot{x}_i = v_i, \quad \dot{v}_i =\frac{1}{\sum_{k=1}^N m_k \phi(x_i-x_k)} \sum_{j=1}^N m_j \phi(x_i-x_j) (v_j - v_i),
\end{equation}
Here, the kernel is given by $\phi_\rho(x,y) =  \frac{\phi(x-y)}{\rho \ast \phi(x)}$. If the defining kernel is locally supported, $\phi(x) \geq \l \one_{|x|<r_0}$ this density-dependent communication makes the system \eqref{e:MT} well-posed provided $\phi \in \Lip$, and the Cucker-Smale flocking theorem holds under the same heavy tail condition, see \cite{MT2014}. However, if $\phi$ is below $\Lip$ or for $L^1$-densities in the kinetic context, $\phi_\rho$ may loose regularity and can even become unbounded due to vacuum.

The purpose of this paper is to carry out the classical flocking theory in the context of models with rough communication kernels. We consider a more inclusive description given by the kinetic Vlasov-alignment equation:
\begin{equation}\label{e:VA}
\p_t f + v \cdot \n_x f + \n_v \cdot ( \cA(f) f ) =0.
\end{equation}
The equation is considered on the phase space $\O \times \R^n$, where $\O$ represents a physical environment assumed to be either the periodic box $\T^n$ or open space $\R^n$, and an alignment force  $\cA(f)$ given by
\begin{equation}\label{ }
\cA(f)(x,v) = \int_{\domain} \phi_\rho(x,x') (v' - v) f(x',v') \dv' \dx',
\end{equation}
where $\phi_\rho(x,y) \geq 0$ is a communication kernel of Schur class (see \ref{i:f2} - \ref{i:f3} below) but without any regularity assumptions. Examples of such kernels include Cucker-Smale with merely continuous $\phi \in C(\O)$ or even $\phi \in L^\infty(\O)$ if densities are  Lebesgue  $\rho\in L^1$, and Motsch-Tadmor models under the same assumption on $\phi$. Many other examples will be discussed in \sect{s:applications} and further literature can be found in these surveys \cite{ABFHKPPS,Axel97,Ben2005,Edel2001,Darwin,VZ2012,MT2014,MP2018,Shv-book,Tadmor-notices}. 

We consider \eqref{e:VA} in the context of measure-values weak solutions $f = \mu \in \cP_1(\O\times \R^n)$, in the space of probability measures with bounded first moment, which include discrete systems \eqref{e:CS} and \eqref{e:MT} as a particular realization of \eqref{e:VA} for empirical solutions
\begin{equation}\label{e:empirical}
\mu^N = \sum_{i=1}^N m_i \d_{x_i(t)} \otimes \d_{v_i(t)}.
\end{equation}

In smooth settings justification of \eqref{e:VA} via the mean-field limit $\mu^N \to f \dv\dx$ and Dobrushin stability argument was done for various models in \cite{HL2009,HT2008,MP2018,Shv-book} and in the general framework of regular communication protocols in \cite{Shv-EA}.

We will establish the following general results (see \prop{p:mp} and \prop{p:CS} for precise statements):

$\bullet$ (Weak Maximum Principle) If $\mu \in L^\infty([0,T); \cP_1(\O \times \R^n))$ is a weak solution to \eqref{e:VA} with a compact initial support $\supp \mu_0 \ss \cO \times \cK$, where $\cK \ss \R^n$ is convex and bounded, then  for all $t>0$,
\begin{equation}\label{e:mp}
\supp \mu(t) \ss (\cO +t \cK) \times \cK.
\end{equation}
In particular, the support of the solution in $v$-direction will remain confined to its initial convex hull.
 
$\bullet$ (Cucker-Smale Flocking Theorem) Suppose the kernel has a power lower bound with heavy tail:
\begin{equation}\label{e:kerbelowintro}
\phi_\rho(x,y) \geq \frac{\l}{\jap{x-y}^\b},
\end{equation}
for some $\b \leq 1$ and $\l>0$ and all $\rho\in \cP(\O)$.
Let $\mu \in L^\infty([0,T); \cP_1(\O \times \R^n))$ be any weak solution to \eqref{e:VA} with a compact initial support $\supp \mu_0 \ss \cO \times \cK$. Then the diameter of the flock remains bounded
\[
D(t): =  \sup\{ |x-y|: \ x,y \in \supp \rho(t)\} <\infty,
\]
and exponential alignment holds
\[
A(t): = \sup\{ |v-w|: \  v,w \in \supp_v \mu(t) \} \leq A(0) e^{-\d t}.
\]

Because of the  lack of access to characteristics, our approach to prove these results will be radically different from traditional one. Instead, we adopt  Ambrossio's probabilistic interpretation of the continuity equation \eqref{e:VA}, see \sect{s:transport}, and a construction of dynamic barriers, which are special test-functions that surround the $v$-support of the solution and  ``squeeze" it at an exponential rate, see \sect{s:CSFT}.

In terms of existence of weak solutions, on the discrete level both Cucker-Smale \eqref{e:CS} and Motsch-Tadmor \eqref{e:MT} systems will have a solution from any initial data $\{x_i^0, v_i^0\}_i$ by the Peano Theorem provided the defining kernel $\phi$ is locally supported and continuous. Of course such solutions are not unique, but each one will obey both the maximum principle and the flocking results.
On the kinetic level existence of weak solutions was proved by Karper-Mellet-Trivisa \cite{KMT2013} for Cucker-Smale system with smooth kernel $\phi$ and for Motsch-Tadmor system for smooth and compactly supported kernels:
\begin{equation}\label{e:KMTrRintro}
\l \one_{|x|\leq r} \leq \phi(x) \leq \L \one_{|x|\leq R}.
\end{equation}
This assumption was necessary to ensure that the communication operator $\bc_\rho(u)(x) = \int_\O \phi_\rho(x,y) u(y) \drho(y)$, which is an integral component of the alignment force $\cA(f)$ is bounded on $L^2(\rho)$.  Such restriction on the support excludes heavy-tailed kernels and leaves the model outside the scope of the Cucker-Smale flocking theorem.  We will fill this gap by the following result.

$\bullet$ (Existence of Weak Solutions) If the family of kernels $\phi_\rho$, $\rho\in L^1$, defines a so-called bounded communication protocol, see \defin{d:protocol}, then for any initial compactly supported distribution $f_0\in L^p(B_R \times B_R)$, for $p>1$, there exists a global weak solution $f \in L^\infty([0,T); L^p(B_{R+TR} \times B_R))$ for any $T>0$. See \thm{t:weak} for a precise statement.

We will show that for Motsch-Tadmor and similar singular models the definition of a bounded protocol applies not only to kernels satisfying \eqref{e:KMTrRintro} but also to rough kernels $\phi \in L^\infty$ satisfying power bounds
\begin{equation}\label{e:powerlikeintro}
\frac{\l}{\jap{x}^\b} \leq \phi(x) \leq \frac{\L}{\jap{x}^\b}, \quad \b>0.
\end{equation}
Since $\rho \ast \phi(x) \leq \|\phi\|_\infty$, we have \eqref{e:kerbelowintro} satisfied for such kernels, and consequently, the Cucker-Smale theorem applies if $\b\leq 1$.

In \sect{s:applications} we will present a broad collection of models that fit into the framework of bounded protocols, and hence, to which all the bulleted statements above apply.

\subsection{Notation}
We denote by $\O$ either the periodic domain $\T^n$ or the open space environment $\R^n$ or the direct product of the two. Let $\cM(\O)$ denote the space of Borel measures on $\O$ with bounded total variation we denote $\| \mu \|_\TV$, and $\cP(\O)\ss \cM(\O)$ denote the set probability measures on $\O$. If $\O$ is unbounded, for $q \geq 0$ we denote
\[
\cP_q(\O) =\left\{ \rho \in \cP(\O): \int_\O \jap{x}^q \drho(x) = \| \rho\|_{\cP_q} <\infty \right\}.
\]
On phase space $\domain$ we utilize similar definitions with equal weight in $x$ and $v$:
\begin{equation*}\label{}
\begin{split}
\cP_q(\domain) & =\left\{ \mu \in \cP(\domain): \int_\domain (\jap{x} + \jap{v})^q \dmu = \| \mu \|_{\cP_q} <\infty \right\}, \\
L^p_q(\domain) & =\left\{ f \in L^p(\domain): \int_\domain (\jap{x} + \jap{v})^q  f^p(x,v) \dv \dx = \| f \|_{L^p_q}^p <\infty \right\}, \\
\end{split}
\end{equation*}
 Further, we use the notation $C_b^k(\domain)$ to denote the class of bounded continuously $k$-times differentiable functions,  and $C^0_b = C_b$ for short.  We also denote by $L^p(\rho)$ the Lebesgue space relative to a measure $\rho\in \cP(\O)$.

\section{Bounded communication protocols}
\subsection{Kernel, strength, thickness} \label{s:EA} Informally, a {\em flock} is a probability measure $\rho \in \cP(\O)$ often with special properties, such as densities with compact support or finite higher moment $\rho \in \cP_q(\O)$, or given by an integrable density $\rho \in L^1_q(\O)$,  which reflects all the properties of a flock required in the context of a given model. Formally, we define a flock as a member of a subclass of admissible measures given by $\cF(\O) = \cP(\O) \cap \cLF(\O)$ where $\cLF(\O) \ss \cM(\O)$ is a linear subspace of finite Borel measures endowed with a norm $\| \cdot\|_{\cLF} \geq c \| \cdot \|_{\cM}$. We assume that $\cLF(\O)$ is rich enough to include at least compactly supported flocks in $L^1$: $L^1(B_R) \ss \cLF(\O)$ for all $R$, with the embedding constant possibly depending on $R$.  If $\rho \in \cF$ we simply denote $\|\rho \|_\cF = \| \rho \|_{\cLF}$.

By {\em communication kernel} of a flocking model we understand a family of non-negative Borel functions $\phi_\rho: \O \times \O \to [0,\infty]$, defined for all $\rho \in \cF(\O)$. At the very least the kernels are assumed to be integrable, $\phi_\rho \in L^1(\rho \otimes \rho)$.  However, for a proper analysis of differential models where these kernels are to be used,  more stringent integrability conditions are needed.  We introduce them next.

We first define the marginals of $\phi_\rho$, called {\em communication strength-functions}. Strength is a measure of  total influence of the flock on an agent $x\in \O$ and is defined by the $x$-marginal: 
\begin{equation}\label{e:s}
\st_\rho(x) = \int_\O \phi_\rho(x,y) \drho(y),
\end{equation}
while the adjoint strength is a measure of influence of an agent on the flock, and is defined by the $y$-marginal:
\begin{equation}\label{e:sadj}
\st^*_\rho(y) = \int_\O \phi_\rho(x,y) \drho(x).
\end{equation}

We will impose uniform boundedness of these strengths along with Hilbert-Schmidt condition:  for all $\rho \in \cF(\O)$, there are constants $\oH, \oS, \oS^*> 0$ which depend only on the norm $\| \rho \|_{\cF}$, such that the following bounds hold:
\newlist{kernel}{enumerate}{1}
\setlist[kernel, 1]
{label=($\phi$\arabic{kerneli}), 
leftmargin=30pt,
}

\begin{kernel}
\item \label{i:f1} $\int_{\O \times \O} \phi^2_\rho(x,y) \drho(y) \drho(x)  \leq \oH( \| \rho \|_{\cF})$,
\item \label{i:f2} $\sup_{x\in \O} \st_\rho(x)   \leq \oS(\| \rho \|_{\cF})$,
 \item  \label{i:f3}  $\sup_{y \in \O} \st^*_\rho(y)  \leq  \oS^*(\| \rho \|_{\cF})$.
\end{kernel}
In other words, the kernels remain uniformly Schur and Hilbert-Schmidt on bounded families of flocks in $ \cF$. On compact domains, we require no finite moments and we often set $\cF = \cP$ or $\cF = L^1$. In these cases, since $\| \rho \|_{\cF} = 1$, the constants $\oH, \oS, \oS^*$ are   independent of $\rho$.

Conditions \ref{i:f1} - \ref{i:f3} can be interpreted as variants of the same scaling assumption on the kernel as a function of  density: 
\[
\phi_\rho \lesssim \frac{1}{\rho}.
\]   

Let us  define communication operators
\begin{equation}\label{e:warep}
\bc_\rho(u)(x) = \int_\O \phi_\rho(x,y) u(y) \drho(y),
\end{equation}
and alignment operators, 
\begin{equation}\label{e:amacro}
\ba_\rho(u)(x) = \int_\O \phi_\rho(x,y) (u(y) - u(x)) \drho(y) = \bc_\rho(u) -  \st_\rho u.
\end{equation}
By the classical Schur test,  such operators are bounded on any $L^p(\rho)$, $1\leq p \leq \infty$, uniformly on bounded families of densities  $\rho \in \cF(\O)$:
\begin{equation}\label{ }
 \| \ba_\rho\|_{L^p(\rho) \to L^p(\rho)}  + \|\bc_\rho\|_{L^p(\rho) \to L^p(\rho)} \leq C(\oS,\oS^*).
\end{equation}

We isolate two important classes of kernels:
\begin{align}
\st_\rho &= \st^*_\rho, \quad \rho\text{-almost everywhere},  & & \text{ (conservative) }  \label{e:consdef}\\
\phi_\rho(x,y) &= \phi_\rho(y,x), \quad \rho \otimes \rho \text{-almost everywhere,}  & &\text{ (symmetric) } . \label{e:symm}
\end{align}
Clearly, every symmetric kernel is conservative. There are, however, examples  of non-symmetric conservative kernels, see \cite{Shv-EA}.

Another component of a flocking model is {\em thickness}. Intuitively, thickness as a measure of the mass of the flock around two communicating agents $(x,y)$.  We will define thickness by recording only the minimal set of requirements to encompass models with rough defining kernels $\phi$, yet sufficient for the construction of weak solutions to the Vlasov-alignment equations. So, our definition will be weaker that the one introduced in \cite{Shv-CPAM}.

\newlist{thickness}{enumerate}{1}

\setlist[thickness, 1]
{label=($\th$\arabic{thicknessi}), 
leftmargin=30pt,
}

\begin{definition}\label{d:th}
{\em Thickness} is a uniformly bounded family of lower semi-continuous functions $\th_\rho : \O\times \O \times \cF(\O) \to [0,\oTh]$ satisfying the following conditions:
\begin{thickness}
\item \label{i:th1}  $\rho \otimes \rho ( \{\th_\rho = 0 \} ) = 0$, for all $\rho \in \cF(\O)$;
\item  \label{i:th2} for every $R>0$ there exists a $R'>0$ such that if $m(R',\rho): = \inf_{|z| <R'} \rho(z) >0$, then 
\[
\inf_{|x|<R, |y| <R} \th_\rho(x,y) \geq c(m(R',\rho), R', R) >0;
\] 
\item  \label{i:th3} there exists a $C>0$ such that for all $\rho',\rho''\in \cF(\O)$,
\[
\sup_{\O \times \O} | \th_{\rho'} (x,y) - \th_{\rho''}(x,y)| \leq C\|\rho'-\rho''\|_{\TV}.
\]
\end{thickness}
\end{definition}
If $\O$ is a compact environment, \ref{i:th2} simply means that thickness is positive $\inf \th_\rho >0$ for non-vacuous densities $\inf \rho >0$.

\begin{definition}\label{d:protocol}
We say that a family $\cC(\O) = \{ \phi_\rho, \th_\rho\}_{\rho \in \cF(\O)}$ defines a {\em bounded communication protocol} on $\O$ if the kernel satisfies  \ref{i:f1} - \ref{i:f3}, the thickness satisfies \ref{i:th1} - \ref{i:th3}, and 
\newlist{regularity}{enumerate}{1}

\setlist[regularity, 1]
{label=(b\arabic{regularityi}), 
leftmargin=30pt,
}

\begin{regularity}
\item \label{i:b1}  there is a positive function $C: \R^2_+ \to \R_+$ such that for all $\rho\in \cF(\O)$ 
\[
 | \phi_\rho(x,y)|   \leq C(\th_\rho(x,y), \| \rho \|_{\cF}), \ \forall (x,y)\in \O \times \O,
\]
\item \label{i:b2} the following continuity-in-$\rho$ holds: for any  $\rho',\rho'' \in \cF(\O)$  we have
\[
|\phi_{\rho'} (x,y) -\phi_{\rho''}(x,y)| \leq C(\th_{\rho'}(x,y),\th_{\rho''}(x,y),\| \rho' \|_{\cF},\| \rho'' \|_{\cF})\| \rho' -  \rho''\|_{\TV},\ \forall (x,y)\in \O \times \O,
\]
for some $C: \R_+^4 \to \R_+$.
\end{regularity}

\end{definition}

\subsection{Classical examples} \label{s:examples}

Let us highlight a few classical examples of communication protocols. A much broader discussion will be postponed till \sect{s:examples}.

\subsubsection{Cucker-Smale} The Cucker-Smale model was originally introduced with a specific smooth power-law kernel
\begin{equation}\label{CS-orig}
\phi_\rho(x,y) = \frac{\l}{\jap{x-y}^\b}.
\end{equation}
Since it is smooth, the role of thickness related to boundedness becomes irrelevant, so we can set $ \th_\rho \equiv 1$. The set of admissible flocks is unrestricted $\cF = \cP(\O)$ for any choice of $\O$.

All the same conclusions apply for all bounded continuous convolution kernels $\phi\in C_b(\O)$,
\begin{equation}\label{CS}
\phi_\rho(x,y) = \phi(x-y).
\tag{CS}
\end{equation}

In the context of rougher kernels $\phi \in L^\infty(\O)$, we set $\cF =  L^1 \cap \cP$, so that the pairing between $\phi$ and density $\rho$ makes functional sense. 
The model is clearly symmetric with the communication  given by
\begin{equation}\label{ }
\bc_\rho(u) = (u \rho)\ast \phi, \quad \st_\rho = \rho \ast \phi.
\end{equation}

\subsubsection{Motsch-Tadmor} 
The Motsch-Tadmor model introduced in \cite{MT2011,MT2014} corresponds to the averaged version of the 
\ref{CS}-protocol\footnote{From here on we adopt a convention  that quotients involving density in the denominator are zero outside the support of the denominator.}, 
\begin{equation}\label{MT}
\begin{split}
\phi_\rho(x,y) = \frac{\phi(x-y)}{\rho \ast \phi(x)}, &\quad \th_\rho(x,y) = \rho \ast \phi(x). \\
\bc_\rho(u) = \frac{(u \rho)\ast \phi}{ \rho \ast \phi}, &\quad \st_\rho = \one_{\rho \ast \phi > 0}, \quad \st_\rho^* = \left(  \frac{\rho }{\rho \ast \phi }\right) \ast \phi.
\end{split}\tag{MT}
\end{equation}
Here, the class of admissible flocks depends on the domain $\O$. If $\O$ is compact, we can define $\cF$ exactly as for \ref{CS}-protocol: $\cF =  \cP$ if $\phi \in C_b(\O)$, and $\cF =  L^1 \cap \cP$ if $\phi \in L^\infty(\O)$.  If $\O$ is non-compact, the  kernel $\psi$ is required to be power-like: 
\begin{equation}\label{e:powerlike}
\frac{\l}{\jap{x}^\b} \leq \phi(x) \leq \frac{\L}{\jap{x}^\b},
\end{equation}
while the admissibility class is given by $\cF = L^1_q \cap \cP$ for $q\geq \b$. 

Verification of the axioms of a protocol is somewhat non-trivial for this models and will be postponed till \sect{s:examples} in the context of a much more general class of models.

\subsubsection{Adjoint Motsch-Tadmor}  Considering adjoint communication operator $\bc_\rho^*$ relative to the density-weighted inner product leads us to consider the adjoint protocol:
\begin{equation}\label{MT*}
\phi_\rho(x,y) = \frac{\phi(x-y)}{\rho \ast \phi(y)}, \quad \th_\rho(x,y) = \rho \ast \phi(y) . \tag{MT$^*$}
\end{equation}
Here the formulas for strength-functions switch, and the actual communication operator is given by
\begin{equation}\label{ }
\bc_\rho(u) = \left( \frac{u \rho}{ \rho \ast \phi} \right)\ast \phi.
\end{equation}

Again, \ref{MT*} is a particular case of a much broader class of $\g$-protocols to be discussed in \sect{s:examples}.

\section{Kinetic dynamics: Vlasov-Alignment equation}

Here and throughout we assume that $\cC$ is a bounded communication protocol over an admissible class of flocks $\cF$, although only the Schur assumptions \ref{i:f2} - \ref{i:f3} will be necessary for the results of this section.  
The corresponding Vlasov-Alignment equation  reads
\begin{equation}\label{e:VAmu}
\p_t \mu + v \cdot \n_x \mu +  \n_v \cdot (  \cA(\mu) \mu) = 0, 
\end{equation}
where
\[
\cA(\mu)(x,v) = \int_\domain \phi_\rho(x,x') (v'-v) \dmu(x',v').
\]

\begin{definition}\label{d:weakVA} We say that $\mu \in L^\infty([0,T); \cP_q(\O \times \R^n))$, $q\geq 1$,  is an admissible weak solution to  \eqref{e:VAmu} with initial condition $\mu_0$ if the macroscopic densities are admissible, $\rho \in L^\infty([0,T); \cF(\O))$, and for any test-function $g\in C_b^1([0,T)\times \O \times \R^{n})$ one has, for all $0<t<T$,
	\begin{equation}\label{e:weakVA}
	\begin{split}
\int_{\domain} g(t,x,v)\dmu(t,x,v)  &=	\int_{\domain} g(0,x,v)\dmu_0(x,v) \\
&+ \int_0^t \int_{\domain}  ( \p_s g + v\cdot \n_x  g + \cA(\mu) \cdot \n_v g)\dmu(s,x,v)\ds .
\end{split}
\end{equation}
\end{definition}
\begin{remark}($\cA(\mu) \in L^q(\mu)$) \label{r:P1} Since $\int_\domain \phi_\rho(x,x') \dmu(x',v') = \st_\rho(x) \in L^\infty(\rho)$ we have 
\[
|\cA(\mu)|^q \leq C \int_\domain \phi_\rho(x,x') ( |v|^q+|v'|^q)\dmu(x',v') , \quad \mu-\text{a.e.}
\]
By the Fubini-Tonelli  Theorem,
\begin{equation*}\label{}
\begin{split}
\int_\domain |\cA(\mu)|^q \dmu & \leq  \int_\domain \int_\domain \phi_\rho(x,x') (|v|^q + |v'|^q) \dmu(s,x',v')\dmu(s,x,v)  \\
&= \int_\domain \st_\rho(x) |v|^q \dmu(s,x,v) +  \int_\domain \st^*_\rho(x') |v'|^q \dmu(s,x',v') \leq ( \oS + \oS^*)  \|\mu(s)\|_{\cP_q} < C.
\end{split}
\end{equation*}
This bound ensures that the right hand side of \eqref{e:weakVA} makes functional sense.
\end{remark}

\begin{remark}(Test-functions) \label{r:ext1}
The definition \defin{d:weakVA} extends to include test-functions $g \in C^1_\loc([0,T)\times \O \times \R^{n})$ such that  $\p_t g, \p_x g, \p_v g \in L^\infty([0,T)\times \O \times \R^{n})$ and  $|g(t,x,v)| \leq C \jap{v}$.

The proof is elementary: consider a smooth cut-off function $\w_R = \w(v/R)$ for $R>0$ large, and denote $g_R = g \w_R$. All the terms converge by the Lebesgue Dominated Convergence Theorem, while the remainder term involving $\n_v \w_R = \frac1R \n_v \w(v/R)$ converges to zero in view of \rem{r:P1}, and the fact that $g\n_v \w_R$ is uniformly bounded and supported on $R/2 < |v|<R$.
\end{remark}

\begin{remark}(Narrow continuity) \label{} Following Ambrossio et al \cite{Ambrosio} we say that a map $t \to \mu(t)\in \cP$ is narrowly continuous on $[0,T)$, if 
for any $\f \in C_b(\domain)$, the function $\lan \f, \mu(t) \ran$ is continuous on $[0,T)$. If a family $\mu$ belongs to $L^\infty([0,T); \cP_q(\domain))$ for some $q>0$, then such family is automatically tight, and hence narrow continuity follows from the usual weak$^*$-continuity in the space of bounded measures $\cM(\domain)$. 

As a consequence of the previous remark we conclude that the map $t\to \lan g, \mu(t) \ran$ is continuous (even Lipschitz) for any $g\in C^1_b(\domain)$, and by approximation and tightness for any $g\in C_b(\domain)$. Hence, any weak solution $\mu$ satisfying \defin{d:weakVA} is automatically narrowly continuous in the sense of  \cite{Ambrosio}.  
\end{remark}
\begin{remark}(Absolute continuity) \label{}
Equation \eqref{e:VA} can be interpreted as a continuity equation 
\begin{equation*}\label{}
\begin{split}
\p_t \mu + \n_{x,v}( U \mu)&=0,\\
U(t, x,v) &= \lan v, \bc_\rho(u)(x) - \st_\rho(x) v \ran \in L^\infty([0,T); L^q(\mu)).
\end{split}
\end{equation*}
Consequently, see \cite{Ambrosio}, the curve $t \to \mu(t)$ is absolutely continuous in $\cP_p$ endowed with the Wasserstein metric $W_q$, and $|\mu'(t)| \leq \| U(t) \|_{L^q(\mu(t))}$. In particular, the solution is Lipschitz continuous
\[
W_q(\mu(t),\mu(s)) \leq C|t-s|.
\]
\end{remark}
\begin{remark}(Macroscopic quantities) \label{r:macro}
Let $\mu \in \cP_q$, for $q \geq 1$. Define  the macroscopic density and momentum  by
\begin{equation}\label{ }
\rho(\cdot) = \int_{v\in \R^n}  \mu(\cdot,v), \quad j(\cdot) = \int_{v\in \R^n} v \mu(\cdot,v).
\end{equation}
Since the first moment of $\mu$ is bounded, $j$ defines a measure on $\O$.  This measure is absolutely continuous with respect to $\rho$, $j \ll \rho$, and by the Radon-Nikodym Theorem there exists a field $u \in L^1(\rho)$ such that $j = u\rho$.  

Moreover, if $1 \leq q <\infty$, then for any $w\in L^p(\rho)$ with $\frac1p + \frac1q =1$ we obtain by \HI,
\[
\int_\O w u \drho = \int_\domain w(x) v \dmu \leq \left( \int_\domain |w(x)|^p \dmu\right)^{1/p} \left( \int_\domain |v|^q \dmu\right)^{1/q} \leq \|\mu\|_{\cP_q} \| w\|_{L^p(\rho)}.
\]
So,
\begin{equation}\label{e:umuLp}
\| u\|_{L^q(\rho)} \leq \|\mu\|_{\cP_q}.
\end{equation}

The case $q=\infty$ can be interpreted as $\mu$ having  a compact support in $v$. If $\supp_v \mu$ is confined to a convex set $\cK \ss \R^n$, then we can conclude also that $u\in \cK$, $\rho$-almost everywhere. Indeed, 
 $\cK$ can be represented as an intersection of hyperspaces:
\[
\cK = \bigcap_{\ell \in F \ss \R^n} \{v:  \ell(v) \leq c_\ell\},
\]
for some compact $F \ss \R^n$.  So, for any $\ell \in F$, $\ell( j ) = \int_{v\in \R^n} \ell(v) \mu(\cdot,v) \leq c_\ell \rho$. The Radon-Nikodym Theorem implies that the corresponding density $\ell(u)$ satisfies $\ell(u) \leq c_\ell$, $\rho$-almost everywhere. 

Using the macroscopic velocity and density, we can represent the alignment force as follows
\[
\cA (\mu)(x,v) =   \bc_\rho(u)(x) - v \st_\rho(x).
\]

\end{remark}

One example of a measure-valued weak solution is provided by the empirical measure
 \begin{equation}\label{e:empirical}
\mu^N = \sum_{i=1}^N m_i \d_{x_i(t)} \otimes \d_{v_i(t)},
\end{equation}
where $\{(x_i,v_i)\}_i$ solve the agent-based system 
\begin{equation}\label{e:ABS}
\left\{\begin{split}\dot{x}_i  &= v_i, \\
\dot{v}_i  &= \sum_{j=1}^N m_j \phi_{\rho^N} (x_i,x_j) (v_j - v_i), \quad \rho^N = \sum_{i=1}^N m_i \d_{x_i},
\end{split}\right.
\end{equation}  
provided the protocol $\cC$ admits Dirac densities $\cP_q \ss \cF$. 

If \eqref{e:ABS} is Hadamard well-posed, soltions clearly satisfy the classical maximum principle:
\begin{equation*}\label{}
\begin{split}
\max_i |v_i(t)|  & \leq \max_i |v_i(0)|, \quad t>0,\\
\max_i |x_i(t) - x_i(0)|  & \leq t \max_i |v_i(0)|.
\end{split}
\end{equation*}

It is natural to expect that a similar maximum principle  holds for any weak solution to \eqref{e:VAmu}.  This is not so straightforward, however, knowing that for  rough coefficients the equation has  no well-defined characteristic map  to represent the solution as a push-forward of the initial data. 

Let us discuss the classical picture vs general setting suitable for bounded protocols.

\subsection{Classical transport structure}
Let us suppose for a moment that  for a given weak solution $\mu$ the coefficients are bounded and Lipschitz
\begin{equation}\label{ }
\st_\rho, \bc_\rho(u)  \in L^\infty([0,T); \Lip_\loc(\O)  \cap L^\infty(\O)).
\end{equation}
This would generally be the case, for example, if the kernel $\phi_\rho$ is Lipschitz in $x$ uniformly in $y$.  In this case we have a globally well-posed  characteristic system  for a pair $X = X(t,s,x,v)$, $V = V(t,s,x,v) \in \Lip_\loc(\domain)$ solving
\begin{equation}\label{e:char}
\begin{split}
\p_t X = V, & \quad \p_t V = \bc_\rho(u)(X) - \st_\rho(X) V,\\
X(s,s,x,v)  = x \in \O,  & \quad V(s,s, x,v)  = v \in \R^n.
\end{split}
\end{equation}
As a consequence, we have the classical transport representation of the solution.
\begin{lemma}\label{l:transport}
Suppose $\st_\rho, \bc_\rho(u)  \in L^\infty([0,T); \Lip_\loc \cap L^\infty(\O)))$. Then 
\begin{equation}\label{e:push}
\mu(t) = (X,V) \# \mu(s), \quad 0\leq s<t<T,
\end{equation}
i.e. for any $h\in C_b(\domain)$,
\begin{equation}\label{e:consK}
\int_{\domain} h(x,v) \dmu(t,x,v) = \int_{\domain} h(X(t,s,x,v),V(t,s,x,v)) \dmu(s,x,v).
\end{equation}
\end{lemma}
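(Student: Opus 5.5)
The plan is a duality (backward transport) argument. Since the velocity field is linear in $v$, the weak formulation \eqref{e:weakVA} can be read as a linear continuity equation
\[
\p_t \mu + \n_{x,v}\cdot (U\mu)=0,\qquad U(t,x,v) = (v,\ \bc_\rho(u)(x) - \st_\rho(x) v),
\]
with $\rho,u$ frozen. Under the hypothesis, $U$ is locally Lipschitz in $(x,v)$ and has linear growth in $v$, uniformly in $t$. This makes the flow \eqref{e:char} global, unique, and locally Lipschitz in $(x,v)$. The hypothesis is only $L^\infty$ in time, so $t \mapsto (X,V)$ is absolutely continuous, and the flow identities $\p_s X(t,s,x,v) = -\n_{x,v}X\cdot U(s,x,v)$ (and likewise for $V$) hold for a.e. $s$.

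First I would fix $t$ and $h \in C^1_c(\domain)$ and define the backward solution
\[
g(s,x,v) = h(X(t,s,x,v),V(t,s,x,v)),\qquad s\in[0,t].
\]
This $g$ solves $\p_s g + v\cdot\n_x g + (\bc_\rho(u) - \st_\rho v)\cdot \n_v g = 0$ for a.e. $s$ and a.e. $(x,v)$. Since $h$ has compact support and the flow maps bounded sets to bounded sets over $[0,t]$, $g$ is bounded, compactly supported in $(x,v)$ uniformly in $s$, and Lipschitz in $(s,x,v)$. The aim is then to plug $g$ into \eqref{e:weakVA} on the interval $[s,t]$; the time shift is harmless by narrow continuity. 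Doing so gives $\lan h,\mu(t)\ran = \lan g(s),\mu(s)\ran$, which is \eqref{e:consK}. Passing from $C^1_c$ to $C_b$ test functions then uses a density argument together with the tightness of $\mu$ provided by the $\cP_q$ bound.

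The main obstacle is that $g$ is only Lipschitz, not $C^1_b$, while $\mu$ is a general measure, so the identity "$\p_s g + U\cdot\n g = 0$ a.e." cannot be tested against $\mu$ directly. I would mollify. Take $h$ smooth, and mollify the coefficients $\bc_\rho(u)$ and $\st_\rho$ in $(t,x)$ to get $b_\e,s_\e$, which converge locally uniformly in $x$ for a.e. $t$ while keeping uniform local Lipschitz and $L^\infty$ bounds. The corresponding flows $(X_\e,V_\e)$ are smooth and converge locally uniformly to $(X,V)$ by Gr\"onwall. The functions $g_\e = h\circ(X_\e,V_\e)$ are $C^1$, with support in a fixed compact set, and they solve the mollified transport equation exactly. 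Inserting $g_\e$ into \eqref{e:weakVA} leaves the error term
\[
\int_s^t\!\int (b_\e - \bc_\rho(u) - (s_\e-\st_\rho)v)\cdot \n_v g_\e \, d\mu(\tau)\,d\tau .
\]
The factor $\n_v g_\e$ is uniformly bounded and supported in a fixed compact set, and the coefficient difference tends to $0$ locally uniformly for a.e. $\tau$ while staying bounded, so the error vanishes by dominated convergence. Finally, $g_\e(s)\to g(s)$ uniformly, which yields the identity for the limit flow.
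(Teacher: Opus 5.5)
Your proposal is correct and takes the same duality route as the paper. The paper solves the backward equation \eqref{e:backward} by $g(s,x,v)=h(X(t,s,x,v),V(t,s,x,v))$, plugs it into \eqref{e:weakVA}, and extends from $h\in C^1_0$ to $h\in C_b$ by approximation. It inserts this $g$ directly, without noting that under the stated hypotheses $g$ is only Lipschitz and so is not an admissible $C^1_b$ test function; your mollification of $\bc_\rho(u)$ and $\st_\rho$, with the error term removed by dominated convergence, supplies exactly that missing justification.
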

\begin{proof}
We can see that by solving the backward transport equation
\begin{equation}\label{e:backward}
\p_s g + v\cdot \n_x  g + \cA(\mu) \cdot \n_v g = 0,
\end{equation}
with terminal condition $g(t,x,v) = h(x,v)$ for an arbitrary $h\in C_0^1(\domain)$. Indeed, the solution is given by $g(s,x,v) = h(X(t,s,x,v),V(t,s,x,v))$. 
So, plugging it into \eqref{e:weakVA} we obtain \eqref{e:consK}. Extension to $h\in C_b(\domain)$ is done by the standard approximation.
\end{proof}

Using  \eqref{e:push} we can formally rewrite the characteristic system \eqref{e:char} (here $s=0$) in a form reminiscent of the discrete system \eqref{e:ABS}:
\begin{equation}\label{e:XV}
\p_t X  = V,  \quad  \p_t V  =  \int_\domain \phi_\rho(X,X') (V'-V) \dmu_0',
\end{equation}
where $X= X(t,0,x,v)$, $X' = X(t,0,x',v')$, etc. A transition from \eqref{e:char} to \eqref{e:XV}, however, requires $h(\cdot) = \phi_\rho(X, \cdot)$ to be bounded and continuous.  These assumptions are not always satisfied, especially the boundedness even if the defining kernel is smooth, c.f. the \ref{MT}-protocol on $\R^n$. If $\supp \mu$ is uniformly bounded on $[0,T) \times \domain$, then local continuity of $\phi_\rho$ implies \eqref{e:XV} by the standard truncation argument.
\begin{lemma}\label{l:chartrad}
Let $\st_\rho, \bc_\rho(u)  \in L^\infty([0,T); \Lip_\loc \cap L^\infty(\O)))$, $\phi_\rho (x,\cdot) \in C_\loc(\O)$ for all $x\in \O$, and $\supp \mu \ss [0,T) \times B_R$. Then the system \eqref{e:char} is equivalent to \eqref{e:XV}.
\end{lemma}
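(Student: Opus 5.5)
The plan is to show that the right-hand sides of the two systems agree along characteristics. Once they do, the ODEs in \eqref{e:char} (with $s=0$) and \eqref{e:XV} are literally the same equations for $(X,V)$ with the same initial data, and the equivalence follows.

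Fix $t$ and a trajectory $X=X(t,0,x,v)$, $V=V(t,0,x,v)$. Recall from \rem{r:macro} that
\[
\bc_\rho(u)(X) - \st_\rho(X)V = \int_\domain \phi_\rho(X,x')(v'-V)\dmu(t,x',v').
\]
Apply \lem{l:transport} with $s=0$: $\mu(t) = (X,V)\#\mu_0$. If the integrand $h(x',v') = \phi_\rho(X,x')(v'-V)$ is allowed in \eqref{e:consK}, the right-hand side becomes
\[
\int_\domain \phi_\rho(X,X')(V'-V)\dmu_0(x',v'),
\]
which is the right-hand side of \eqref{e:XV}. So the whole matter reduces to justifying the use of $h$ in \eqref{e:consK}.

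The obstacle is exactly this justification. The map $h$ is continuous in $x'$ by the hypothesis $\phi_\rho(X,\cdot)\in C_\loc(\O)$, and continuous in $v'$ because it is affine there. It is neither bounded nor compactly supported, however, so it is not in $C_b(\domain)$. The fix is truncation. Pick a cutoff $\chi\in C_c(\domain)$ with $\chi\equiv 1$ on a neighbourhood of $B_R$. Then $h\chi\in C_b$, and \eqref{e:consK} applies to it.

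Because $\supp\mu(t)\ss B_R$, the left-hand side of \eqref{e:consK} is unchanged when $h$ is replaced by $h\chi$. On the right-hand side, $\mu_0$-almost every $(x',v')$ is mapped by the flow into $\supp\mu(t)\ss B_R$. Hence $\chi(X',V')=1$ for $\mu_0$-almost every $(x',v')$, and the cutoff disappears there too. The time-dependence of $\rho=\rho(t)$ is harmless, since the argument is carried out for each fixed $t$.

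This gives the two identities in both directions. A solution of \eqref{e:char} satisfies \eqref{e:XV}. Conversely, \eqref{e:XV} has right-hand side equal to $\bc_\rho(u)(X)-\st_\rho(X)V$, which is locally Lipschitz, so \eqref{e:XV} is the same ODE as \eqref{e:char}. Uniqueness for \eqref{e:char} then identifies the two flows.
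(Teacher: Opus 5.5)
Your proposal is correct and follows the paper's route: the paper obtains \eqref{e:XV} by inserting $h(x',v')=\phi_\rho(X,x')(v'-V)$ into the push-forward identity \eqref{e:consK} of \lem{l:transport}, and handles the unboundedness of $h$ by a ``standard truncation argument'' that uses the bounded support, which is what your cutoff $\chi$ does. Your observation that $\mu_0$-almost every initial point is carried into $\supp\mu(t)$ supplies the detail the paper leaves implicit, namely why the cutoff can be removed on the right-hand side.
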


Under the assumptions of \lem{l:chartrad}  the characteristic system takes traditional form \eqref{e:XV}, and the maximum principle on $V$-component follows directly. Under the lower bound on the kernel \eqref{e:kerbelowintro} the flocking follows too by the Ha-Liu argument \cite{HL2009}. So, in the smooth settings the theory falls under the scope of existing literature.

\subsection{Probabilistic representation of solutions}\label{s:transport}  
Let $\mu \in L^\infty([0,T); \cP_q(\O \times \R^n))$ be a weak solution to \eqref{e:VA} without any additional regularity assumptions on the macroscopic parameters. The lack of regularity of the transport field $U$ makes the characteristic system \eqref{e:char}
ill-posed in the Hadamard sense. In particular, the characteristics may loose uniqueness. 

In this case, the push-forward representation of solutions given by \eqref{e:push} can still be interpreted in probabilistic sense where a single characteristic curve is replaced with a bundle of curves emanating from the same data and with an associated probability measure on the bundle. More specifically,  since our solution is narrowly continuous, for $q>1$, it falls under the scope of \cite[Theorem 8.2.1]{Ambrosio}, which states that there exists a measure $\Sigma$ on $C([0,T]; \domain) \times \domain$ supported on  the set $(X,V,x,v)$ where $(X,V)$ is an absolutely continuous distributional solution to \eqref{e:char} (with $s=0$), such that the following representation holds:
\begin{equation}\label{e:pushmu}
\int_\domain h(x,v) \dmu(t,x,v) = \int_{C([0,T]; \domain) \times \domain} h(X(t),V(t)) \dSigma(X,V,x,v), \quad \forall h\in C_b(\domain).
\end{equation}

The representation implies several structural properties of the measure $\Sigma$. First, 
evaluating \eqref{e:pushmu} at $t=0$ shows that the $(x,v)$-marginal of $\Sigma$ is $\mu_0$:
\[
\int_{C([0,T]; \domain)} \dSigma(X,V,x,v) = \mu_0(x,v).
\]
This in particular implies that $\Sigma$ is supported on initial conditions starting from the support of $\mu_0$. Second, by the disintegration theorem, there exists a family of probability measures $\{ \nu_{x,v}\}_{(x,v) \in \supp \mu_0}$ on the path space $C([0,T]; \domain)$ each supported on absolutely continuous characteristic curves \eqref{e:char} starting with the same initial data $(x,v)$, such that for all $h\in C_b(\domain)$,
\begin{equation}\label{e:disint}
\int_\domain h(x,v) \dmu(t,x,v) = \int_{ \domain} \left( \int_{C([0,T]; \domain)} h(X(t),V(t)) \dnu_{x,v}(X,V) \right) \dmu_0(x,v).
\end{equation}
In the classical situation, $\nu_{x,v}$ would be the Dirac measure on the unique characteristics $(X,V)$ starting from $(x,v)$. 

Third, we obtain a somewhat instructive description of the support of $\mu$ at time $t$:
\begin{equation}\label{e:suppmu}
\supp \mu(t) \ss \overline{ \{ (X(t),V(t)):  (X(0),V(0)) \in \supp \mu_0 \text{ and } (X,V) \text{ solves } \eqref{e:char} \}}.
\end{equation}
Indeed, if $h$ is supported outside of the set on the right hand side, then clearly from \eqref{e:disint}, we obtain 
$\int_\domain h(x,v) \dmu(t,x,v) = 0$. 

Further desired implications of \eqref{e:disint}, such as the maximum principle or the flocking analysis, which traditionally relies on the push-forward representation of solutions stumbles upon the lack of regularity of the field $U$ and continuity of the characteristic map.  For example, in writing the equation for $V$-characteristics we use the transport nature of the solution to represent it in terms of the initial measure:
\[
\p_t V = \int_\domain \phi_\rho(X,x') (v' - V)\dmu(t,x',v') = \int_{C([0,T]; \domain) \times \domain}  \phi_\rho(X,X'(t)) (V'(t) - V)\dSigma(X',V',x',v').
\]
This already requires the function $h(x',v') = \phi_\rho(X,x') (v' -V)$ to be continuous in $x',v'$, which is not always the case.  Furthermore, due to lack of continuity with respect to initial data, it is not always possible to find a representative $V$ which maximizes a particular coordinate of $V$ -- a necessary step to prove the maximum principle via the traditional technique. Similarly, the diameter of the $X$-characteristics $D = \sup\{ |X(t,x,v) - X(t,x',v')|: (x,v), (x',v') \in \supp \mu_0 \}$ is not always achieved.

For these reasons we must depart from the traditional characteristic-based analysis and develop a new technique which relies solely on the original weak formulation of the solution \eqref{e:weakVA}.

We finish our introductory discussion with a similar remark pertaining to representation of solutions to the macroscopic density equation
\begin{equation}\label{ }
\p_t \rho + \n_x(u \rho) = 0
\end{equation}
which is a direct consequence of \eqref{e:weakVA} by taking test functions $g = g(t,x)$. The finite $q$-th moment implies that the family of measures $\rho(t)$, $t<T$ is tight, and weakly continuous, even Lipschitz in $W_q$. Consequently, it is narrowly continuous. So, if $q>1$, and hence $u\in L^\infty L^q(\rho)$ according to \eqref{e:umuLp}, we apply \cite[Theorem 8.2.1]{Ambrosio},  which states that there exists a probability measure $\eta$ on $C([0,T]; \O) \times \O$ satisfying  supported on the set of pairs $(\g,x)$ where $\g$ is an absolutely continuous distributional solution to 
\begin{equation}\label{e:gODE}
\ddt \g(t) = u(\g(t),t), \quad \text{with } \g(0) = x;
\end{equation}
and the following representation holds
\begin{equation}\label{e:etarep}
\int_\O h(x) \drho(t,x) = \int_{C([0,T]; \O)  \times \O} h(\g(t)) \deta(\g,x), \quad \forall h\in C_b(\O), \ t <T.
\end{equation}

Similarly to the previous discussion, we have 
\begin{equation}\label{ }
\rho_0 = \int_{C([0,T]; \O)} \deta(\g,\cdot),
\end{equation} 
and 
\begin{equation}\label{e:supprho}
\supp \rho(t) \ss  \overline{ \{ \gamma(t):  \g(0) \in \supp \rho_0 \text{ and } \g \text{ solves } \eqref{e:gODE} \}}.
\end{equation}

\subsection{Maximum principle for weak solutions} \label{ss:mp}

Let us state a version of  the maximum principle for weak measure-valued solutions.

\begin{proposition}[Weak Maximum Principle]\label{p:mp} Suppose $\cC$ is a bounded communication protocol. 
Let $\mu \in L^\infty([0,T); \cP_1(\O \times \R^n))$ be an admissible weak solution to \eqref{e:VA} with a compact initial support $\supp \mu_0 \ss \cO \times \cK$, where $\cK \ss \R^n$ is assumed to be convex and bounded.  Then 
\begin{equation}\label{e:mp}
\supp \mu(t) \ss (\cO +t \cK) \times \cK.
\end{equation}
In particular, the support of the solution in $v$-direction will remain confined to its initial convex hull.
\end{proposition}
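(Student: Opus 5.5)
The plan is to prove the velocity confinement first, directly from the weak formulation \eqref{e:weakVA} with a nonnegative ``barrier'' test function. Then I would deduce the spatial confinement from the probabilistic representation \eqref{e:supprho} of the density equation.

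\emph{Step 1: velocity support.} Since $\cK$ is convex and bounded, its closure is the intersection of half-spaces $\{v:\ \ell(v)\le c_\ell\}$ over unit covectors $\ell$. It therefore suffices to fix one pair $(\ell,c)=(\ell,c_\ell)$, set $w(v)=\ell(v)-c$, and show that
\[
F(t)=\int_\domain w_+\dmu(t)
\]
vanishes for all $t$. Fix $\e>0$ and choose $\psi_\e\in C^1(\R)$ with $\psi_\e=0$ on $(-\infty,0]$, $0\le\psi_\e'\le 1$, $\psi_\e(w)\le w_+$ and $\psi_\e(w)\to w_+$ as $\e\to0$. The test function $g=\psi_\e(w(v))$ is time-independent, has bounded derivatives and grows at most linearly. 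By \rem{r:ext1} it is admissible in \eqref{e:weakVA}, which gives
\[
\int \psi_\e(w)\dmu(t)=\int\psi_\e(w)\dmu_0+\int_0^t\!\!\int\!\!\int \phi_\rho(x,x')\,(w'-w)\,\psi_\e'(w)\dmu(s,x',v')\dmu(s,x,v)\ds,
\]
where $w'=w(v')$ and I used $\ell(\cA(\mu))=\int\phi_\rho(x,x')(\ell(v')-\ell(v))\dmu'$. The key pointwise observations are the following. Since $\psi_\e'(w)=0$ when $w\le0$, we have $-w\psi_\e'(w)\le 0$. We also have $w'\le w'_+$ and $\psi'_\e\le1$. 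Hence the integrand is bounded by $\phi_\rho(x,x')\,w'_+$. Integrating first in $(x,v)$ produces $\st^*_\rho(x')\le\oS^*$ by \ref{i:f3}. All integrals are finite by \rem{r:P1}, since $\mu\in\cP_1$. This yields
\[
\int\psi_\e(w)\dmu(t)\le F(0)+\oS^*\int_0^tF(s)\ds .
\]
I would then let $\e\to0$ by monotone or dominated convergence (note $w_+\le C\jap{v}$) and use $F(0)=0$, since $\supp\mu_0\ss\cO\times\cK$. Gr\"onwall's Lemma then gives $F\equiv0$. Hence $\ell(v)\le c_\ell$ holds $\mu(t)$-a.e. for every $\ell$, and so $\supp_v\mu(t)\ss\overline{\cK}$. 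No regularity of $\phi_\rho$ enters; only the Schur bound is used.

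\emph{Step 2: spatial support.} By Step 1 and \rem{r:macro}, the macroscopic velocity satisfies $u(t)\in\overline{\cK}$ $\rho(t)$-a.e. In particular $u\in L^\infty(\rho)$, so the moment hypothesis of \cite[Theorem 8.2.1]{Ambrosio} holds for every $q$, even though $\mu$ is only assumed to be in $\cP_1$. The representation \eqref{e:etarep} then applies. By Fubini,
\[
\int\!\!\int_0^T\one_{u(\g(t),t)\notin\overline{\cK}}\dt\deta=\int_0^T\rho(t)(\{u\notin\overline{\cK}\})\dt=0,
\]
so for $\eta$-a.e. curve we have $\dot\g(t)\in\overline{\cK}$ for a.e. $t$. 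By convexity,
\[
\g(t)-\g(0)=\int_0^t\dot\g\in t\overline{\cK},
\]
so $\g(t)\in\cO+t\overline{\cK}$. Since the exceptional curves have $\eta$-measure zero, the argument behind \eqref{e:supprho} gives $\supp\rho(t)\ss\overline{\cO+t\cK}$. Combining this with Step 1 gives \eqref{e:mp}, with the usual convention of closures.

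\emph{Main obstacle.} The main obstacle is the sign bookkeeping in Step 1. One must avoid using $u\in\cK$, which is what is being proved, and this is why the two-particle form of $\cA$ is kept and only the adjoint strength bound is used. The other delicate point is making sure that the exceptional null sets in Step 2 are handled along $\eta$-a.e. curve, rather than assuming a pointwise flow.
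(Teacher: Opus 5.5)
Your proposal is correct and follows essentially the same route as the paper. You use a $C^1$ barrier in $\ell(v)-c$ admitted via \rem{r:ext1}, discard the sign-definite part of the alignment term, close with the adjoint bound $\oS^*$ and Gr\"onwall's Lemma, and then integrate the superposition-principle curves. The differences are minor: your family $\psi_\e$ with $\psi_\e'\le 1$ and the limit $\e\to 0$ replaces the paper's fixed $g$ with $g''\ge 0$ and $rg'(r)\le Cg(r)$, and you handle the exceptional null set by Fubini (correctly noting that $u\in L^\infty(\rho)$ is what makes \cite[Theorem 8.2.1]{Ambrosio} applicable under only a $\cP_1$ assumption), whereas the paper redefines $u$ on a $\rho$-negligible set.
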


\begin{proof} 
The proof will be done in two steps leading towards \eqref{e:mp}.

{\sc Step 1}: $\supp \mu(t) \ss \O \times \cK$.   So, here we will focus solely on establishing that the $v$-support of the solution, i.e. $\supp_v \mu = \supp (\int_\O \dmu(x,\cdot))$, is confined to $\cK$.
To this end, we recall that
any convex compact set $\cK$ can be represented as an intersection of hyperspaces:
\[
\cK = \bigcap_{\ell \in F \ss \R^n} \{v:  \ell(v) \leq c_\ell\},
\]
for some compact $F \ss \R^n$. So, we aim to establish that if initially for a fixed $\ell\in F$, $\ell(\supp_v \mu_0 ) \leq c_\ell$, then the same holds at any later times. Without loss of generality and to simplify the notation, let us just assume that $\ell$ is the $i$th basis functional $\ell(v) = v_i$, and denote $c_\ell = c_i$. So, we must demonstrate  $(\supp_v \mu(t) )_i \leq c_i$ if initially so.

To this end, let us consider the test-function $g= g(v_i - c_i)$, where $g \in C^1_\loc$ is given by
\begin{equation*}\label{}
g(r) = \left\{ \begin{split}
0, & \quad r<0,\\
 \frac12 r^2, &\quad 0\leq r \leq 1,\\
r-\frac12, &\quad r >1.
\end{split}\right.
\end{equation*}
Basically, we want a function supported on $r>0$ with positive first and second derivatives $g',g'' \geq 0$, and so that $r g'(r)\leq C g(r)$.  Since $g$ fulfills the assumptions of \rem{r:ext1} we obtain from the weak formulation

\begin{equation*}\label{}
\begin{split}
\int_{\domain} g(v_i - c_i) \dmu(t,x,v) & =	 \int_0^t \int_{\domain} \int_{\domain} \phi_\rho(x,y) (w_i  - v_i) g'(v_i - c_i)\dmu(s,y,w) \dmu(s,x,v) \ds \\
& \leq 	 \int_0^t \int_{\domain} \int_{\domain} \phi_\rho(x,y) (w_i  - v_i) g'(v_i - c_i) \one_{w_i > v_i}\dmu(s,y,w) \dmu(s,x,v) \ds \\
& \leq  \int_0^t \int_{\domain} \int_{\domain} \phi_\rho(x,y) (w_i - c_i) g'(v_i - c_i) \one_{w_i > v_i} \dmu(s,y,w)\dmu(s,x,v) \ds \\
& \leq  \int_0^t \int_{\domain} \int_{\domain} \phi_\rho(x,y) (w_i - c_i) g'(w_i - c_i) \one_{w_i > v_i} \dmu(s,y,w)\dmu(s,x,v) \ds \\
& \leq  \int_0^t \int_{\domain} \int_{\domain} \phi_\rho(x,y) g(w_i - c_i) \dmu(s,y,w)\dmu(s,x,v) \ds \\
& \leq  \int_0^t \int_{\domain}  \st^*_\rho(y) g(w_i - c_i) \dmu(s,y,w) \ds\\
& \leq  \oS^* \int_0^t \int_{\domain}   g(w_i - c_i) \dmu(s,y,w) \ds.
\end{split}
\end{equation*}
By \GL\ $\int_{\domain} g(v_i - c_i) \dmu(t,x,v) = 0$, as desired, which implies that $\mu$ is supported on the hyperspace $v_i \leq c_i$, as desired.

{\sc Step 2}: $\supp \rho \ss \cO + t \cK$. As a consequence of \rem{r:macro},  $u\in \cK$, $\rho$-almost everywhere. Note that in the same density $\rho$ satisfies the continuity equation weakly $\p_t \rho + \n_x( \tilde{u} \rho)$ for any $\tilde{u} = u$, $\rho$-almost everywhere. So, let us alter $u$ on a $\rho$-negligible set such that  $u(x)\in \cK$, for any $x$ (we can set $u = \bar{u}$ outside the $\supp \rho$). 

Now, integrating \eqref{e:gODE} in time,
\begin{equation}\label{e:gammaFTC}
\g(t) = x + \int_0^t u(\g(s),s)\ds \in \cO + t \cK.
\end{equation}
Hence, $\supp \rho \ss \cO + t \cK$ as follows from \eqref{e:supprho}.
\end{proof}

\subsection{Cucker-Smale flocking theorem for weak solutions}\label{s:CSFT}

\begin{proposition}\label{p:CS} Suppose the kernel $\phi_\rho$ has the following power bound from below
\begin{equation}\label{e:kerbelow}
\phi_\rho(x,y) \geq \frac{\l}{\jap{x-y}^\b}, \quad \rho \otimes \rho - \text{a.e.}
\end{equation}
for some $\b \leq 1$ and $\l>0$. 
Let $\mu \in L^\infty([0,T); \cP_1(\O \times \R^n))$ be any weak solution to \eqref{e:VA} with a compact initial support $\supp \mu_0 \ss \cO \times \cK$. Then the diameter of the flock remains bounded
\[
D(t): =  \sup\{ |x-y|: \ x,y \in \supp \rho(t)\} <\infty,
\]
and exponential alignment holds
\[
A(t): = \sup\{ |v-w|: \  v,w \in \supp_v \mu(t) \} \leq A(0) e^{-\d t},
\]
for some $\d >0$.
\end{proposition}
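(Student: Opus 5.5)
The plan is to run the classical Ha--Liu system of differential inequalities $\dot D \leq A$, $\dot A \leq -\l \jap{D}^{-\b} A$, but with both quantities replaced by barriers that can be checked directly in the weak formulation \eqref{e:weakVA}. We never differentiate along characteristics. The spatial part is easy. By \prop{p:mp}, $\supp_v \mu(t) \ss \cK(t)$, where $\cK(t)$ is any convex set containing $\supp_v \mu(s)$ for $s \geq t$. By Step 2 of that proof (via \eqref{e:gammaFTC} and \eqref{e:supprho}), if $\supp_v\mu(s)$ lies in a set of diameter $\leq a(s)$ for $s\in[t_0,t]$, then $D(t) \leq D(t_0) + \int_{t_0}^t a(s)\ds$. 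This holds because two curves $\g,\g'$ solving \eqref{e:gODE} satisfy $|\g(t)-\g'(t)| \leq |\g(t_0)-\g'(t_0)| + \int |u(\g)-u(\g')|\ds$, and $u(\cdot,s)$ may be modified to take values in the convex hull of $\supp_v\mu(s)$.

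For alignment, I fix a unit direction $\ell$ and a dynamic half-space barrier. Let $M(t)$ and $m(t)$ be barriers with $\ell(v)\in[m(t),M(t)]$ on $\supp\mu(t)$, and set $a(t)=M(t)-m(t)$. I test with $g(v\cdot\ell - M(t))$, where $g$ is as in Step 1 of \prop{p:mp}. This time there is an extra term $-\dot M\, g'$, and I choose
\[
\dot M = -\tfrac{\l}{2}\jap{\bar D(t)}^{-\b} a(t), \qquad \dot m = +\tfrac{\l}{2}\jap{\bar D(t)}^{-\b} a(t),
\]
where $\bar D(t) = D(0)+\int_0^t a$ is an a priori diameter bound. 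The key computation is to split the alignment term $\int\int \phi_\rho(x,y)(\ell(w)-\ell(v))g'(\ell(v)-M)\,\dmu\dmu$ into two pieces. The part with $\ell(w)>\ell(v)$ is controlled as in Step 1 by $\oS^*\int g$, or a Gr\"onwall-compatible bound. On the remaining part we use $\ell(w)-\ell(v)\leq m - \ell(v)$ in the bulk. Since $\ell(v)>M$ is where $g'>0$, this gives $\ell(w)-\ell(v) \leq -(a)$ up to a term proportional to $(\ell(v)-M)_+$, which is again absorbed by $g' r\lesssim g$. The lower bound \eqref{e:kerbelow} together with $|x-y|\leq \bar D$ then gives $\phi_\rho \geq \l\jap{\bar D}^{-\b}$, and the total mass is $1$. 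So the good term is $-\l\jap{\bar D}^{-\b} a \int g'$, which dominates $-\dot M\int g'$ by the choice of the factor $1/2$. Gr\"onwall yields $\int g(\ell(v)-M(t))\dmu(t)=0$. The symmetric argument handles $m$, and a covering over finitely many directions $\ell$ (or a ball barrier $g(|v-c(t)|-r(t))$ centered at a moving center) converts this into a bound on $A(t)\lesssim a(t)$.

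It remains to close the ODE system $\dot a = -\l\jap{\bar D}^{-\b}a$ with $\dot{\bar D} = a$, which is exactly the Ha--Liu system. The Lyapunov functional $a(t) + \l\int_{0}^{\bar D(t)}\jap{r}^{-\b}\dr$ is nonincreasing, and since $\b\leq1$ gives $\int_0^\infty \jap{r}^{-\b}\dr=\infty$, $\bar D$ stays bounded by some $D_\infty$. Then $a(t)\leq a(0)e^{-\l\jap{D_\infty}^{-\b}t}$. A bootstrap/continuity argument justifies using $\bar D$ inside the barrier: the barrier holds on $[0,t]$, hence the diameter bound holds on $[0,t]$, and so on. Alternatively one can simply define $(a,\bar D)$ by the ODE first and verify both barriers simultaneously.

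The main obstacle is the barrier computation itself. To get a negative term proportional to $a$ from the integral over $w$, I need all of $\supp\mu$ in $\ell(w)\leq m + \text{small}$ to face the escaping mass near $M$. The difficulty is controlling the part of the support lying between $m$ and $M$, together with the non-sharpness of $m$. I would handle this by replacing $\ell(w)-\ell(v)$ with $\int \ell(w)\dmu - \ell(v)$, that is, the mean of $\ell(u)$. This requires a \emph{lower} bound on $\phi_\rho$ only, via $\phi_\rho(\ell(w)-\ell(v))\leq \l\jap{\bar D}^{-\b}(\ell(w)-\ell(v))$ when $\ell(w)\leq \ell(v)$. Combined with the positive part already controlled, this compares against the midpoint and gives the decay rate $\d = \frac{\l}{2}\jap{D_\infty}^{-\b}$ for the width.
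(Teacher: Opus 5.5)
\textbf{The gap: the barriers converge to a fixed point.} Your dynamics $\dot M=-\frac{\l}{2}\jap{\bar D}^{-\b}a$ and $\dot m=\frac{\l}{2}\jap{\bar D}^{-\b}a$ keep the midpoint $\frac12(M+m)$ frozen at its initial value. So you are in effect claiming that the $\ell$-velocities collapse onto the midpoint of the initial velocity range, and this is false. Take $n=1$, $\phi_\rho\equiv 1$ (so $\l=1$, $\b=0$), and $\mu_0=0.9\,\delta_{(0,0)}+0.1\,\delta_{(0,1)}$. The mean velocity $0.1$ is conserved, and the two velocities are $0.1-0.1e^{-t}$ and $0.1+0.9e^{-t}$. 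Your lower barrier $m(t)=\frac12-\frac12 e^{-t}$ is violated by the slow particle for every $t>0$.

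The faulty step is the key computation. On $\supp\mu$ one has $\ell(w)\geq m$, not $\ell(w)\leq m$, so nothing gives $\ell(w)-\ell(v)\leq -a$. What the lower bound on $\phi_\rho$ actually produces, as you begin to see in your last paragraph, is the good term $\kappa\,(\bar\ell(t)-M)\int g'\dmu$. Here $\kappa=\l\jap{\bar D}^{-\b}$ and $\bar\ell(t)=\int \ell(v)\dmu(t)$ is the \emph{current mean}. This term absorbs $-\dot M\int g'\dmu=\frac{\kappa}{2}a\int g'\dmu$ only if $\bar\ell\leq\frac12(M+m)$. The mirror argument for $m$ needs $\bar\ell\geq\frac12(M+m)$. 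So ``comparing against the midpoint'' works only when mean and midpoint coincide, which is not propagated in time; for non-conservative kernels the mean need not even be constant.

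\textbf{The repair: let the solution drive the barriers.} Set $\dot M=-\kappa(M-\bar\ell)$ and $\dot m=-\kappa(m-\bar\ell)$. This still gives $\frac{d}{dt}(M-m)=-\kappa(M-m)$, wherever $\bar\ell$ sits. With $r=\ell(v)-M$ and $r'=\ell(w)-M$ one has, $\mu\otimes\mu$-a.e.,
$\phi_\rho(r'-r)g'(r)\leq 2\phi_\rho\, g(r')+\kappa (r'-r)g'(r)$.
When $r'>r$ this uses that $g'$ is nondecreasing and $rg'(r)\leq 2g(r)$; when $r'\leq r$ it uses $\phi_\rho\geq\kappa$. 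Integrating gives at most $2\oS^*\int g\dmu+\kappa(\bar\ell-M)\int g'\dmu$. The last term cancels $-\dot M\int g'\dmu$ exactly, and Gr\"onwall closes. Since $\bar\ell$ is Lipschitz in $t$ (test with $\ell(v)$ via \rem{r:ext1}), $M,m\in C^1$, and $g(\ell(v)-M(t))$ is admissible by \rem{r:ext1} alone. Running this over all unit directions $\ell$, not finitely many, gives $A(t)\leq A(0)e^{-\int_0^t\kappa}$ with no constant loss.

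\textbf{Comparison with the paper.} Once repaired, your route is a genuine and somewhat lighter alternative. The paper uses the translation-invariant pair functional $G(t)=\frac12\iint(v_i-v_i'-e(t))_+^2\dmu'\dmu$. That functional never has to locate a center, but its test function depends on $\mu$ itself. This is why the paper needs Lemmas \ref{l:ext2}--\ref{l:ext3} and the longer cancellation bookkeeping ($II_{212}$ against $R$, and $I$ against $II_{211}$).

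\textbf{The bootstrap is circular as stated.} The rate $\kappa$ presupposes $D\leq\bar D$, which in turn presupposes the barriers. You can fix this in one of two ways. Either run the first pass with the unconditional bound $D(t)\leq a+bt$ from \prop{p:mp}, as the paper does, improving once for $\b<1$ and twice for $\b=1$. Or build slack into $\bar D$, so that a continuity argument can be pushed past the first failure time.
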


Before we get to the proof, let us expand the class of test-functions that we are allowed to use in the weak formulation \eqref{e:weakVA}.

\begin{lemma}\label{l:ext2}
The definition \defin{d:weakVA} extends to include test-functions $g \in W^{1,\infty}([0,T)\times \O \times \R^{n})$ such that  $\p_x g, \p_v g \in C([0,T)\times \O \times \R^{n})$ and  $\p_t g \in L^\infty_t C_{x,v}$. 
\end{lemma}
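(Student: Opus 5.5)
The plan is to approximate a given $g$ by functions in $C^1_b$ (or in the class of \rem{r:ext1}) via mollification in all variables $(t,x,v)$, and then pass to the limit in each term of \eqref{e:weakVA}. Fix $g$ as in the statement. Extend $g$ to $t<0$ (and slightly beyond $t$ if needed) by reflection or constant extension; this preserves $W^{1,\infty}$. Then set $g_\e = g \ast \eta_\e$ with a standard mollifier $\eta_\e$ on $\R\times\O\times\R^n$. Each $g_\e$ is smooth with bounded derivatives, and $\|\n g_\e\|_\infty \leq \|\n g\|_\infty$. Truncating in $v$ as in \rem{r:ext1} if necessary, \eqref{e:weakVA} holds for $g_\e$.

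Now pass $\e\to0$ term by term.

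First, $g_\e\to g$ uniformly, since $g$ is Lipschitz. Hence the endpoint terms $\int g_\e(t)\dmu(t)$ and $\int g_\e(0)\dmu_0$ converge, because $\mu(t)$ are probability measures.

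Second, $\p_x g$ and $\p_v g$ are continuous, so $\p_x g_\e\to\p_x g$ and $\p_v g_\e\to\p_v g$ locally uniformly, hence pointwise everywhere. They are also uniformly bounded. Since $|v|+|\cA(\mu)|\in L^1(\mu(s))$ uniformly in $s$ by \rem{r:P1} (with $q=1$), dominated convergence on $[0,t]\times\domain$ with respect to $\mu(s)\ds$ handles the terms $v\cdot\n_x g_\e$ and $\cA(\mu)\cdot\n_v g_\e$.

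The main obstacle is the term $\p_t g_\e$, because $\p_t g$ is only $L^\infty$ in time. Its convergence can be secured in two ways.

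The first option is to mollify only in $(x,v)$ and to handle time differently. For a.e. $s$, $\p_t g(s,\cdot)$ is continuous in $(x,v)$, and $(\p_t g)\ast_{x,v}\eta_\e\to\p_t g$ pointwise in $(x,v)$ for a.e. $s$. That function is then integrated against $\mu(s)\ds$ with a bound.

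This still needs time regularity of the test function in the weak formulation. To get it, use the narrow continuity and absolute continuity remarks: for $\tilde g\in C^1_b$ in $(x,v)$, the map $s\mapsto\int\tilde g(s)\dmu(s)$ is absolutely continuous with derivative $\int(\p_s\tilde g+v\cdot\n_x\tilde g+\cA\cdot\n_v\tilde g)\dmu(s)$. This can be derived from \eqref{e:weakVA} applied to $x,v$-independent-in-time tests together with a product rule for Lipschitz-in-time functions. It can be justified by difference quotients, writing
\[
\int \tilde g(s+h)\dmu(s+h)-\int\tilde g(s)\dmu(s) = \int(\tilde g(s+h)-\tilde g(s))\dmu(s+h)+\Big(\int\tilde g(s)\dmu(s+h)-\int\tilde g(s)\dmu(s)\Big).
\]
Dividing by $h$ and integrating in $s$, the first term converges by the Lipschitz bound together with narrow continuity of $\mu$ (tightness from $\cP_1$). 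The second term converges by \eqref{e:weakVA} with a time-independent test function.

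Applying this to $g_\e=g\ast_{x,v}\eta_\e$, which is $C^1_b$ in $(x,v)$ and Lipschitz in $t$, gives \eqref{e:weakVA} for $g_\e$. Then let $\e\to0$ using dominated convergence in every term, including $\p_t g_\e\to\p_t g$ for a.e. $s$ pointwise, with bound $\|\p_t g\|_\infty$.

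I expect the difference-quotient/product-rule step for the Lipschitz-in-time test to be the most delicate point. It needs care near $s=0$ and $s=t$, and it requires that, for a.e. $s$, the difference quotients $(\tilde g(s+h)-\tilde g(s))/h\to\p_s\tilde g(s)$ locally uniformly in $(x,v)$. This is where the assumption $\p_t g\in L^\infty_tC_{x,v}$ enters: via Lebesgue differentiation for the Bochner-type map $s\mapsto \p_t g(s)\in C(K)$ on compacts $K$.
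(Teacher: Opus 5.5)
Your argument is correct, but it takes a different route from the paper.

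The paper mollifies $g$ in time only, $g_\delta=g\ast_t\chi_\delta$ on $[s,t]$ with $\delta<s$. Then $g_\delta\in C^1_b$ can be inserted directly into \eqref{e:weakVA}. The transport and alignment terms converge by dominated convergence, since $\nabla_{x,v}g_\delta\to\nabla_{x,v}g$ pointwise by continuity in $t$. For the $\partial_t$-term, Fubini and the substitution $\tau\mapsto\tau+\tau'$ move the time convolution from $\partial_t g$ onto $\mu$. After that one only needs $\int\partial_\tau g(\tau)\,d\mu(\tau+\tau')\to\int\partial_\tau g(\tau)\,d\mu(\tau)$ for each fixed $\tau$, which is narrow continuity applied to the single function $\partial_\tau g(\tau)\in C_b$. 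Finally $s\to 0$.

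You instead prove a product rule for $F(s)=\int\tilde g(s)\,d\mu(s)$ by difference quotients. You use frozen-time test functions for the $\mu$-increment, and a Lebesgue differentiation theorem for the $C(K)$-valued map $s\mapsto\partial_t\tilde g(s)$ for the $\tilde g$-increment. At the endpoints, $\frac{1}{h}\int_t^{t+h}F-\frac{1}{h}\int_0^{h}F\to F(t)-F(0)$ by continuity of $F$, so you need no extension to negative times.

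This route works but costs more than the paper's. First, you need strong measurability of $s\mapsto\partial_t g(s)$ into $C(K)$. This is built into a Bochner reading of $L^\infty_tC_{x,v}$, or follows from Pettis' theorem since $C(K)$ is separable. Second, the frozen-gradient term needs a diagonal Lebesgue-differentiation step, which you only assert. That step is exactly what your $(x,v)$-mollification buys, since it makes $\nabla_{x,v}\tilde g$ Lipschitz in $t$. Otherwise the mollification is superfluous, because $g$ is already $C^1_b$ in $(x,v)$.

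You can also drop the Bochner step by borrowing the paper's trick. Write $\frac{\tilde g(s+h)-\tilde g(s)}{h}=\int_0^1\partial_t\tilde g(s+h\sigma)\,d\sigma$ and substitute $s'=s+h\sigma$; this puts the shift on $\mu$. Then only scalar dominated convergence and narrow continuity at fixed $s'$ are needed.

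On presentation: you announce two ways of handling $\partial_t g_\varepsilon$ but give only one, and the opening full-mollification paragraph is abandoned. The write-up would be cleaner without both.
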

\begin{proof}
We prove the lemma by showing that \eqref{e:weakVA} holds for any small initial time $s<t$ and then letting $s \to 0$, noting that by weak continuity of the measures and strong continuity of the test-function
\[
\int_{\domain} g(s,x,v)\dmu(s,x,v) \to \int_{\domain} g(0,x,v)\dmu_0(x,v), \quad s \to 0.
\]

Consider a standard 1D mollifier $\chi$ supported on the unit interval, and for $0<\d<s$ let $\chi_\d = \d^{-1} \chi(t \d^{-1})$, $g_\d(t) = g \ast \chi_\d(t)$, for $t\geq s$. Then $g_\d \in C^1$ and we can plug it into \eqref{e:weakVA}:
	\begin{equation}\label{e:weakVAd}
	\begin{split}
\int_{\domain} g_\d(t,x,v)\dmu(t,x,v)  &=	\int_{\domain} g_\d(s,x,v)\dmu(s,x,v) \\
&+ \int_s^t \int_{\domain}  ( \p_\t g_\d + v\cdot \n_x  g_\d + \cA(\mu) \cdot \n_v g_\d)\dmu(\t,x,v)\dtau .
\end{split}
\end{equation}
The end-point integrals as well as the inner integral involving $v\cdot \n_x  g_\d + \cA(\mu) \cdot \n_v g_\d$ can be interpreted as a pairing of the form $\int_M G_\d(t) d \lambda(t)$, where $\lambda$ is a finite measure and $G$ is a continuous function on $M$. Since $G_\d \to G$ pointwise and boundedly,  the dominated convergence applies to get 
\[
 \int_s^t \int_{\domain}  ( v\cdot \n_x  g_\d + \cA(\mu) \cdot \n_v g_\d)\dmu(\t,x,v)\dtau \to  \int_s^t \int_{\domain}  (  v\cdot \n_x  g + \cA(\mu) \cdot \n_v g)\dmu(\t,x,v)\dtau.
\]
As to the remaining integral, we have
\begin{equation*}\label{}
\begin{split}
\int_s^t \int_{\domain}  \p_\t g_\d \dmu(\t,x,v) \dtau & =\int_{-s}^s \int_s^t \int_{\domain}  \p_\t g(\t - \t',x,v) \dmu(\t,x,v) \dtau \chi_\d(\t') \dtau' \\
& =\int_{-s}^s \int_{s+\t'}^{t+\t'} \int_{\domain}  \p_\t g(\t ,x,v) \dmu(\t+\t',x,v) \dtau \chi_\d(\t') \dtau' .
\end{split}
\end{equation*}
Since $\p_\t g(\t ,x,v) \in C(\domain)$ and $t \to \mu(t)$ is weakly on continuous, we have 
\[
\int_{\domain}  \p_\t g(\t ,x,v) \dmu(\t+\t',x,v)  \to \int_{\domain}  \p_\t g(\t ,x,v) \dmu(\t,x,v),
\]
for any $\tau>0$, while clearly the integrals are bounded uniformly in $\tau$. So, by the dominated convergence,
\[
\int_{s+\t'}^{t+\t'} \int_{\domain}  \p_\t g(\t ,x,v) \dmu(\t+\t',x,v) \dtau \to \int_{s}^{t} \int_{\domain}  \p_\t g(\t ,x,v) \dmu(\t,x,v) \dtau.
\]
Consequently,
\[
\int_s^t \int_{\domain}  \p_\t g_\d \dmu(\t,x,v) \dtau \to \int_s^t \int_{\domain}  \p_\t g \dmu(\t,x,v) \dtau.
\]
\end{proof}
\begin{lemma}\label{l:ext3}
Let $e \in C^1(\R)$ be a smooth function of time and $\f \in C^1(\R^n)$. Then the function
\[
g(t,v) = \frac12 \int_\domain \f(v_i - v_i' - e(t)) \dmu(t,x',v')
\]
satisfies the conditions of \lem{l:ext2}.
\end{lemma}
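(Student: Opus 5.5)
We need to verify that $g(t,v)=\frac12\int \f(v_i-v_i'-e(t))\dmu(t,x',v')$ lies in $W^{1,\infty}$, that $\p_v g$ (and the trivial $\p_x g=0$) is continuous, and that $\p_t g\in L^\infty_t C_{x,v}$. Presumably $\f$ is meant to be $C^1$ with bounded derivative (e.g. of the type used in the proof of \prop{p:mp}, with $\f'$ bounded and $|\f(r)|\leq C\jap{r}$). Under that assumption $g$ is well defined, since $\mu(t)\in\cP_1$ uniformly in $t$, and $|g(t,v)|\leq C(\jap{v}+\|\mu(t)\|_{\cP_1}+|e(t)|)$. Note also that if $\f$ is unbounded then $g\notin L^\infty$. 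However, the proof of \lem{l:ext2} combined with the cut-off argument of \rem{r:ext1} only uses boundedness of the derivatives together with linear growth of $g$, so we aim to verify those.

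\emph{Spatial regularity.} The $v$-derivative is
\[
\p_{v_i} g(t,v)=\frac12\int_\domain \f'(v_i-v_i'-e(t))\dmu(t,x',v').
\]
Differentiation under the integral is justified by dominated convergence, since $\f'$ is bounded. This gives $|\p_v g|\leq \frac12\|\f'\|_\infty$. Continuity of $\p_v g$ in $(t,v)$ follows from three facts: $\f'$ is uniformly continuous on compacts, the family $\{\mu(t)\}$ is tight, and $t\mapsto\mu(t)$ is narrowly continuous. Concretely, $\f'(v_i-v_i'-e(t))$ depends continuously and boundedly on $(t,v,v')$, so narrow convergence of $\mu(t_k)\to\mu(t)$ together with uniform convergence of the integrands on compact sets gives joint continuity. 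Clearly $\p_x g=0$.

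\emph{Time regularity.} This is the main step. We compute the time derivative using the weak formulation \eqref{e:weakVA} itself. Freeze $v$ and a time $\tau$, and apply \eqref{e:weakVA} in the extended form of \rem{r:ext1} to the test function
\[
h(s,x',v')=\frac12\f(v_i-v_i'-e(s)),
\]
which is $C^1$ with bounded derivatives and linear growth. Since this $h$ has no $x'$-dependence, the transport term vanishes. We obtain
\[
g(t,v)-g(t_0,v)=\frac12\int_{t_0}^t\int_\domain \f'(v_i-v_i'-e(s))\bigl(-\dot e(s)-\cA(\mu)_i(s,x',v')\bigr)\dmu(s,x',v')\ds .
\]
The integrand in $s$ is bounded by $\|\f'\|_\infty(|\dot e|+\int|\cA(\mu)|\dmu)$. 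By \rem{r:P1} with $q=1$, this quantity is uniformly bounded on $[0,T')$. Hence $g$ is Lipschitz in $t$ uniformly in $v$, and $g\in W^{1,\infty}$ in the derivative sense. For almost every $s$,
\[
\p_t g(s,v)=-\frac12\int \f'(v_i-v_i'-e(s))\bigl(\dot e(s)+\cA(\mu)_i\bigr)\dmu(s).
\]

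\emph{The obstacle.} It remains to show that this expression is continuous in $v$ for fixed $s$. The function $\cA(\mu)_i\in L^1(\mu(s))$ need not be continuous, so narrow-continuity arguments are unavailable. This does not matter, however, because we only need continuity in $v$ with $s$ fixed. The integrand $\f'(v_i-v_i'-e(s))(\dot e+\cA_i)$ converges pointwise as $v\to v_0$, by continuity of $\f'$, and is dominated by $\|\f'\|_\infty(|\dot e|+|\cA_i|)\in L^1(\mu(s))$. Dominated convergence therefore gives continuity in $v$. Together with the uniform bound, this shows $\p_t g\in L^\infty_t C_{x,v}$. All hypotheses of \lem{l:ext2} are then verified, up to the linear-growth caveat, which is handled by the cutoff of \rem{r:ext1} exactly as before.
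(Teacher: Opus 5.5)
Your proof is correct and follows the paper's argument. You freeze $v$, insert $\frac12\f(v_i - v_i' - e(s))$ into \eqref{e:weakVA} to get Lipschitz dependence on $t$ with the explicit formula for $\p_t g$, and read off $\p_t g \in L^\infty_t C_v$; you also supply the uniform bound from \rem{r:P1} and the fixed-time dominated-convergence argument for continuity in $v$, which the paper simply calls ``clear.''

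Your caveat on the hypotheses is apt: $\f\in C^1$ alone does not even make $g$ well defined. In the application, however, the tapered $\f$ lies in $C^1_b$, so the linear-growth extension of \lem{l:ext2} that you sketch is not actually needed.
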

\begin{proof} All the conditions are trivial except that $\p_t g \in L^\infty_t C_{v}$. Let us check it.

Clearly, for a fixed $v$,  the function $(t,v') \to \f(v_i - v_i' - e(t))$ is $C^1$, and therefore can be used as a test-function in \eqref{e:weakVA}. We obtain
\begin{equation*}\label{}
\begin{split}
g(t,v) - g(s,v) & =  - \int_s^t \int_{\domain}   \dot{e}(\t) \f'(v_i - v_i' - e(\tau)) \dmu(\t,x',v')\dtau \\
&-\int_s^t \iint_{\ddomain}   \phi_\rho(x',x'')(v_i'' - v_i') \f'(v_i - v_i' - e(\tau)) \dmu'' \dmu' \dtau.
\end{split}
\end{equation*}
The inner spacial integrals are uniformly bounded as functions of $(\t,v)$. It shows that $g$ is Lipschitz in $t$ and that
\begin{equation}\label{e:timepg}
\p_t g(t,v) = -\int_{\domain}   \dot{e}(t) \f'(v_i - v_i' - e(t)) \dmu'- \iint_{\ddomain}   \phi_\rho(x',x'')(v_i'' - v_i') \f'(v_i - v_i' - e(t)) \dmu'' \dmu' ,
\end{equation}
which clearly belongs to $L^\infty_t C_{v}$.
\end{proof}

\begin{proof}[Proof of \prop{p:CS}] According to the maximum principle proved in \prop{p:mp}, $D(t) \leq a + bt:= d(t) $ for some $a,b>0$.
Denote $\phi(r) =  \frac{\l}{\jap{r}^\b}$, and define
\[
e(t) = A(0) \exp\left\{ - \e \int_{d(0)}^{d(t)} \phi(r) \dr \right\},
\]
where $\e>0$ is small to be defined later.  We first will demonstrate that $|v-w| \leq e(t)$, for all $(v,w)\in \supp_v \mu \otimes \mu$. This will be done by showing that 
\[
G(t) = \frac12 \iint_{\ddomain} (v_i - v_i' - e(t))_+^2 \dmu' \dmu = 0,
\]
Since we already know that the support of $\mu$ in $v$ is bounded uniformly in time, we can redefine and taper the function $(v_i - v_i' - e(t))_+^2 $ for large $v,v'$ without changing $G(t)$.  We will not change the notation for such tapered function to simplify the notation.  Then, $G(t) = \int_\domain g(t,v) \dmu$, where 
\[
g(t,v) = \frac12 \int_\domain (v_i - v_i' - e(t))_+^2 \dmu',
\]
 and according to \lem{l:ext3}, $g$ can be used as a test-function. Since $G(0) = 0$, we obtain
\begin{equation*}\label{}
\begin{split}
G(t) = \int_0^t \int_{\domain}  ( \p_s g + \cA(\mu) \cdot \n_v g)\dmu \ds.
\end{split}
\end{equation*}
Using the formula for the time derivative of $g$, \eqref{e:timepg}, and that $\dot{e} = -\e b A_0 \phi(d) e$,
\begin{equation}\label{e:G1st}
\begin{split}
G(t) & = \e b A_0 \int_0^t \phi(d(s)) e(s)  \iint_{\ddomain}  (v_i - v_i' - e)_+ \dmu'\dmu\ds \\
& + \int_0^t \iiint_{\dddomain}   \phi_\rho(x',x'')(v_i' - v_i'') (v_i - v_i' - e)_+ \dmu'' \dmu'  \dmu \ds\\
& +\int_0^t \iiint_{\dddomain}   \phi_\rho(x,x'')(v_i'' - v_i) (v_i - v_i' - e)_+ \dmu'' \dmu'  \dmu \ds.
\end{split}
\end{equation}
We will fix an $\e>0$ so that $\e b A_0 \leq 1$. Thus, we have
\begin{equation}\label{e:Gsplit}
\begin{split}
G(t) & \leq R + I + II,\\
R & = \int_0^t \phi(d(s)) e(s)  \iint_{\ddomain}  (v_i - v_i' - e)_+ \dmu'\dmu\ds\\
I & = \int_0^t \iiint_{\dddomain}   \phi_\rho(x',x'')(v_i' - v_i'') (v_i - v_i' - e)_+ \dmu'' \dmu'  \dmu \ds,\\
II & = \int_0^t \iiint_{\dddomain}   \phi_\rho(x,x'')(v_i'' - v_i) (v_i - v_i' - e)_+ \dmu'' \dmu'  \dmu \ds.
\end{split}
\end{equation}
We will now split the integrals $I, II$ further. Let us start with $II$:
\begin{equation*}\label{}
\begin{split}
II & =  \int_0^t \iiint_{\dddomain}   \phi_\rho(x,x'')(v_i'' - v_i) (v_i - v_i' - e)_+ \one_{v_i'' -v_i' >e} \dmu'' \dmu'  \dmu \ds\\
& +   \int_0^t \iiint_{\dddomain}   \phi_\rho(x,x'')(v_i'' - v_i) (v_i - v_i' - e)_+ \one_{v_i'' -v_i' \leq e} \dmu'' \dmu'  \dmu \ds: = II_1 + II_2.
\end{split}
\end{equation*}
For the first part we have
\begin{equation}\label{e:II1}
\begin{split}
II_1 & =  \int_0^t \iiint_{\dddomain}   \phi_\rho(x,x'')((v_i'' - v_i' - e) - (v_i-v_i' -e)) (v_i - v_i' - e)_+ \one_{v_i'' -v_i' >e} \dmu'' \dmu'  \dmu \ds\\
& =  \int_0^t \iiint_{\dddomain}   \phi_\rho(x,x'')((v_i'' - v_i' - e)_+ - (v_i-v_i' -e)_+) (v_i - v_i' - e)_+ \one_{v_i'' -v_i' >e} \dmu'' \dmu'  \dmu \ds\\
& \leq \frac12 \int_0^t \iiint_{\dddomain}   \phi_\rho(x,x'')(v_i'' - v_i' - e)^2_+  \dmu'' \dmu'  \dmu \ds \\
& = \frac12 \int_0^t \iint_{\ddomain}   \st_\rho(x'') (v_i'' - v_i' - e)^2_+  \dmu'' \dmu' \ds \leq C \int_0^t G(s) \ds.
\end{split}
\end{equation}
Inside $II_2$, we have a negative integrand: $v_i'' - v_i \leq 0$. So, by our assumption on the kernel 
\eqref{e:kerbelow} we obtain
\[
II_2 \leq  \int_0^t \phi(D(s)) \iiint_{\dddomain}   (v_i'' - v_i) (v_i - v_i' - e)_+ \one_{v_i'' -v_i' \leq e} \dmu'' \dmu'  \dmu \ds.
\]
Writing $ \one_{v_i'' -v_i' \leq e} = 1 -  \one_{v_i'' -v_i' > e}$ we split the integral into two:
\begin{equation*}\label{}
\begin{split}
II_2 & \leq \int_0^t \phi(D(s)) \iiint_{\dddomain}   (v_i'' - v_i) (v_i - v_i' - e)_+ \dmu'' \dmu'  \dmu \ds \\
& + \int_0^t \phi(D(s)) \iiint_{\dddomain}   (v_i - v''_i) (v_i - v_i' - e)_+ \one_{v_i'' -v_i' > e} \dmu'' \dmu'  \dmu \ds: = II_{21} + II_{22}.
\end{split}
\end{equation*}
For the second, we perform similar estimates as in \eqref{e:II1}:
\begin{equation}\label{e:II22}
\begin{split}
II_{22} & = \int_0^t \phi(D(s)) \iiint_{\dddomain}   ((v_i - v_i' - e)- (v''_i-v'_i-e)) (v_i - v_i' - e)_+ \one_{v_i'' -v_i' > e} \dmu'' \dmu'  \dmu \ds  \\
& =  \int_0^t \phi(D(s)) \iiint_{\dddomain}   ((v_i - v_i' - e)_+- (v''_i-v'_i-e)_+) (v_i - v_i' - e)_+ \one_{v_i'' -v_i' > e} \dmu'' \dmu'  \dmu \ds  \\
& \leq C \int_0^t \iiint_{\dddomain} (v_i - v_i' - e)^2_+  \dmu'' \dmu'  \dmu \ds + C\int_0^t \iiint_{\dddomain}  (v''_i-v'_i-e)^2_+  \dmu'' \dmu'  \dmu \ds\\
& = C \int_0^t \iint_{\ddomain} (v_i - v_i' - e)^2_+   \dmu'  \dmu \ds + C\int_0^t \iint_{\ddomain}  (v''_i-v'_i-e)^2_+  \dmu'' \dmu'   \ds \\
& = C \int_0^t G(s) \ds.
\end{split}
\end{equation}
As to $II_{21}$ it provides the necessary dissipation: 
\begin{equation*}\label{}
\begin{split}
II_{21} & = \int_0^t \phi(D(s)) \iiint_{\dddomain}   (v_i'' - v_i) (v_i - v_i' - e)_+ \dmu'' \dmu'  \dmu \ds \\
& = \int_0^t \phi(D(s)) \iiint_{\dddomain}   (v_i'' - v'_i) (v_i - v_i' - e)_+ \dmu'' \dmu'  \dmu \ds \\
& +\int_0^t \phi(D(s)) \iiint_{\dddomain}   (v_i' - v_i) (v_i - v_i' - e)_+ \dmu'' \dmu'  \dmu \ds: = II_{211} + II_{212}.
\end{split}
\end{equation*}
We will leave $II_{211}$ unchanged for now, while inside $II_{212}$  we notice that $v_i' - v_i \leq -e$. 
So,
\begin{equation}\label{e:II212}
II_{212} \leq - \int_0^t \phi(D(s)) e(s) \iint_{\ddomain}   (v_i - v_i' - e)_+ \dmu'  \dmu \ds \leq  - R,
\end{equation}
where we recall that $R$ is defined in \eqref{e:Gsplit}. So, this term provides essential cancellation of what otherwise would be an uncontrolled term.

Let us summarize what we have obtained so far:
\begin{equation}\label{ }
G(t) \leq I+ II_{211} + C \int_0^t G(s) \ds.
\end{equation}
We will now utilize $I$ to cancel $II_{211}$ as well by cost of adding another Gr\"onwall term $\int_0^t G(s) \ds$. Indeed,  we proceed similarly,
\begin{equation*}\label{}
\begin{split}
I & = \int_0^t \iiint_{\dddomain}   \phi_\rho(x',x'')(v_i' - v_i'') (v_i - v_i' - e)_+ \dmu'' \dmu'  \dmu \ds\\
& = \int_0^t \iiint_{\dddomain}   \phi_\rho(x',x'')(v_i' - v_i'') (v_i - v_i' - e)_+ \one_{v_i'' >v_i - e}\dmu'' \dmu'  \dmu \ds\\
& + \int_0^t \iiint_{\dddomain}   \phi_\rho(x',x'')(v_i' - v_i'') (v_i - v_i' - e)_+ \one_{v_i''  \leq v_i - e}\dmu'' \dmu'  \dmu \ds \\
\intertext{noticing that the integrand in the first integral is negative,}
& \leq \int_0^t  \phi(D(s)) \iiint_{\dddomain}  (v_i' - v_i'') (v_i - v_i' - e)_+ \one_{v_i'' >v_i - e}\dmu'' \dmu'  \dmu \ds\\
& + \int_0^t \iiint_{\dddomain}   \phi_\rho(x',x'')((v_i-v''_i-e) - (v_i-v'_i-e)) (v_i - v_i' - e)_+ \one_{v_i''  \leq v_i - e}\dmu'' \dmu'  \dmu \ds \\
& = - II_{211}  - \int_0^t  \phi(D(s)) \iiint_{\dddomain}  (v_i' - v_i'') (v_i - v_i' - e)_+ \one_{v_i'' \leq v_i - e}\dmu'' \dmu'  \dmu \ds\\
& + \int_0^t \iiint_{\dddomain}   \phi_\rho(x',x'')((v_i-v''_i-e)_+ - (v_i-v'_i-e)_+) (v_i - v_i' - e)_+ \one_{v_i''  \leq v_i - e}\dmu'' \dmu'  \dmu \ds 
\intertext{the integral in the middle is manipulated similar to the last integral,}
& = - II_{211}  - \int_0^t  \phi(D(s)) \iiint_{\dddomain}  ((v_i-v''_i-e)_+ - (v_i-v'_i-e)_+) (v_i - v_i' - e)_+ \one_{v_i'' \leq v_i - e}\dmu'' \dmu'  \dmu \ds\\
& + \int_0^t \iiint_{\dddomain}   \phi_\rho(x',x'')((v_i-v''_i-e)_+ - (v_i-v'_i-e)_+) (v_i - v_i' - e)_+ \one_{v_i''  \leq v_i - e}\dmu'' \dmu'  \dmu \ds 
\intertext{after which in both integrals we make the same estimates as in \eqref{e:II1} and \eqref{e:II22}:} 
& \leq - II_{211} + C\int_0^t G(s) \ds.
\end{split}
\end{equation*}
We have proved that 
\[
G(t) \leq C\int_0^t G(s) \ds,
\]
which implies that $G(t) \equiv 0$. It proves that the diameter of the support of $\mu$ in the $v_i$-direction is bounded by $e(t)$. Doing it for all $i$ proves that $A(t) \leq C e(t)$. 

Let us now consider the case when $\b <1$. Then for some $\d>0$, we obtain $e(t) \lesssim e^{- c t^\d}$, for some $c,\d>0$. By the argument in the beginning of \sect{ss:mp}, $\|u - \bar{u}\|_{L^\infty(\rho)} \lesssim e^{- c t^\d}$. Changing $u$ possibly on a subset of zero $\rho$-measure, and defining $u = \bar{u}$ outside of $\supp \rho$, we can conclude that $\sup_{x\in \O} |u(x,t) - \bar{u}| \lesssim e^{- c t^\d}$. This alteration does not change the weak formulation of the continuity equation $\rho_t + \n_x( u \rho) = 0$. Hence,  as a consequence of the application of \eqref{e:supprho} and \eqref{e:gammaFTC}, $D(t) \leq \bar{D}<\infty$ for all time. 

We now repeat the above argument, this time choosing $e(t) = A_0 e^{-\e t}$ for some small $\e>0$. 
The residual term in \eqref{e:G1st} becomes
\[
\e A_0 \int_0^t  e(s)  \iint_{\ddomain}  (v_i - v_i' - e)_+ \dmu'\dmu\ds,
\]
while the estimate for the dissipation term in \eqref{e:II212} becomes
\begin{equation}\label{e:II212a}
II_{212} \leq - \phi(\bar{D}) \int_0^t  e(s) \iint_{\ddomain}   (v_i - v_i' - e)_+ \dmu'  \dmu \ds.
\end{equation}
So, picking $\e < \phi(\bar{D}) / A_0$ eliminates the residual term. This proves that $A(t) \leq e(t)$ as required.

Let us consider the case $\b=1$.  Then at the initial run, $e(t) \lesssim 1 / \jap{t}^\d$ for some $\d>0$. Consequently, the estimate on the diameter of the flock improves to $D \lesssim \jap{t}^{1-\d}$. The coefficient in front of the dissipation term improves to $\phi(D) \gtrsim \frac{1}{\jap{t}^{1-\d}}$, which means that we can pick $e(t) = A_0 e^{-\e (\jap{t}^\d-1)}$ to absorb the residual term. This brings us up to the previous case, because the amplitude $A(t) \leq e(t)$ becomes integrable in time, and the diameter $D$ becomes bounded. 

This finishes the proof.

\end{proof}

\begin{remark}\label{}
The theorem also holds under general fat tail condition:
\begin{equation}\label{e:kerbelow}
\phi_\rho(x,y) \geq \phi(|x-y|), \quad \rho \otimes \rho - \text{a.e.},
\end{equation}
where 
\[
\int_0^\infty \phi(r) \dr = \infty.
\]
However, it offers no specific rate of alignment:
\[
A(t) = o(1).
\]
This follows from the first run of the argument since $\int_{d(0)}^{d(t)} \phi(r) \dr \to \infty$. Such a weak alignment rate doesn't allow us to conclude that the radius of the flock $D$ remains bounded.

\end{remark}

\section{Existence of admissible weak solutions}

The results of Propositions \ref{p:mp} and \ref{p:CS} are stated for arbitrary weak solutions with compact initial data. Let us discuss existence of such solutions in the context of models based on bounded communication protocols.  At this point we will need all the assumptions of \defin{d:protocol}.

On the discrete level, by the classical Peano Theorem one can construct a class of weak solutions given by empirical measures \eqref{e:empirical} $\mu^N$ provided the kernel $\phi_{\rho^N}$ is a continuous function of $x_i$'s, and the admissible class $\cF$ contains Diracs.   This would be the case, for example, for the Cucker-Smale model \eqref{CS} with $\phi \in C_b(\O)$. Same for the Motsch-Tadmor model \eqref{MT} since in the kernel
\[
\phi_{\rho^N}(x_i,x_j) = \frac{\phi(x_i - x_j)}{\sum_k m_k \phi(x_i - x_k)}
\]
the denominator is bounded from below by $m_i \phi(0)$.

For integrable densities a general result can be established for compactly supported data in $L^p$, $p>1$. We prove it next.

\begin{theorem}\label{t:weak} Let $\cC$ be a bounded communication protocol.
For any compactly supported data $f_0 \in L^p(B_R \times B_R)$ for some $p>1$, there exists a global weak solution $f \in L^\infty([0,T); L^p(B_{R(T)} \times B_R)$ to \eqref{e:VA}. 
\end{theorem}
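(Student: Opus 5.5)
I would construct the solution by regularization and compactness, in the spirit of Karper--Mellet--Trivisa, with the protocol axioms replacing the specific structure of the kernels. The first step is to regularize the kernel. For $\e>0$ I would replace $\phi_\rho$ by a smooth, bounded kernel $\phi^\e_\rho$, for instance by mollifying $\rho\mapsto\rho_\e=\rho\ast\chi_\e$ inside the protocol and mollifying $\phi_{\rho_\e}$ in $(x,y)$. At the same time I would add a small vacuum-preventing correction: replace $\rho$ by $(1-\e)\rho_\e+\e\,\one_{B_{R'}}/|B_{R'}|$, so that \ref{i:th2} gives a positive lower bound on thickness on bounded sets. By \ref{i:b1} this makes $\phi^\e$ uniformly bounded on $B_{R(T)}\times B_{R(T)}$. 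For such kernels the characteristic system \eqref{e:char} is Hadamard well-posed, and a fixed point in $C([0,T];W_1)$ produces a solution $f^\e$. The Lipschitz dependence on $\rho$ needed for the contraction comes from \ref{i:b2} and \ref{i:th3}, since the TV distance of mollified densities is controlled by the Wasserstein distance at scale $\e$.

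The next step is a set of uniform a priori bounds. Since $\cK=\overline{B_R}$ is convex, the maximum principle \prop{p:mp} (or directly \lem{l:transport}) gives $\supp f^\e(t)\ss B_{R+tR}\times B_R$ uniformly in $\e$. For the $L^p$ bound I would use the transport structure. The $v$-divergence of $\cA$ is $-n\,\st_\rho$, so along characteristics
\[
\tfrac{d}{dt}f=n\,\st_\rho f .
\]
By \ref{i:f2}, $\|f^\e(t)\|_{L^p}\le e^{n\oS(1-1/p)t}\|f_0\|_{L^p}$. Here the admissible norm $\|\rho^\e\|_\cF$ is controlled because $\rho^\e$ is supported in a fixed ball and bounded in $L^1$, using the embedding $L^1(B_{R(T)})\ss\cLF$. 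Finally, the weak formulation with \rem{r:P1} gives equicontinuity of $t\mapsto f^\e(t)$ in $W_1$.

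The compactness step is where I expect the main obstacle. Weak-$*$ compactness in $L^\infty L^p$ and the $W_1$ equicontinuity give $f^\e\to f$ weakly and $\rho^\e(t)\to\rho(t)$ narrowly. However, the nonlinear term $\cA(f^\e)f^\e=(\bc_{\rho^\e}(u^\e)-\st_{\rho^\e}v)f^\e$ involves rough kernels whose dependence on $\rho$ is only TV-continuous by \ref{i:b2}. So I need strong convergence of the macroscopic quantities $\rho^\e$ and $j^\e=u^\e\rho^\e$ in $L^1_{t,x}$. I would obtain this from velocity averaging: $f^\e$ is bounded in $L^p$ with $p>1$, and the transport equation has a right-hand side $\n_v\cdot(\cA f^\e)$ bounded in $L^1_tL^{p'}$-type spaces after multiplying by compactly supported $v$-test functions. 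Averaging lemmas then give $\int f^\e\psi(v)\dv$ compact in $L^1_{t,x}$. With $\rho^\e(t)\to\rho(t)$ in $L^1$, hence in TV for a.e. $t$, \ref{i:b2} yields $\phi_{\rho^\e}\to\phi_\rho$ pointwise on the set where $\th_\rho>0$, which is $\rho\otimes\rho$-full by \ref{i:th1}. To pass to the limit in $\iint\phi_{\rho^\e}(x,y)(w-v)\,\n_vg\,\df^\e\df^\e$, I would split it into a region $\{\th_\rho\ge\d\}$ and its complement. On the first region, \ref{i:th3} keeps $\th_{\rho^\e}\ge\d/2$ for small $\e$, so the kernels are uniformly bounded by \ref{i:b1} and converge uniformly by \ref{i:b2}; the strong convergence of $\rho^\e$, $j^\e$ then closes that part. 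The complement is made small using the Hilbert--Schmidt bound \ref{i:f1} together with the $L^p$ bound and \HI. If averaging turns out to be too delicate in this generality, my fallback is to test only against macroscopic quantities, using that $\cA$ is affine in $v$.
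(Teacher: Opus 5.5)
Your proposal is correct and follows the paper's proof essentially step by step. It uses the same regularized kernel built from $(1-\e)\chi_\e\ast\rho$ plus a vacuum-preventing reference density; your compactly supported uniform density works as well as the paper's Gaussian $\orho$, provided $R'$ is taken from \ref{i:th2} for a ball containing $B_{R(T)}$. The remaining steps also match: the maximum-principle support bound and the $L^p$ bound, velocity averaging for strong compactness of the velocity averages, and the split into $\{\th_\rho\geq\d\}$ (handled by \ref{i:th3}, \ref{i:b1}, \ref{i:b2}) and its complement (handled by \ref{i:f1} and \ref{i:th1}).

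Two small remarks. First, the flux in the averaging lemma is bounded in $L^p$, not $L^{p'}$, because $\cA_\e$ is bounded on the common support. Second, the reduction via affinity of $\cA$ in $v$ that you list as a fallback is really part of the main line: it turns the alignment term into products of velocity averages, exactly as in the paper. It cannot substitute for averaging, which stays indispensable since \ref{i:b2} only gives TV-continuity in $\rho$.
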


We note that this theorem is new new even for Cucker-Smale model and for Motsch-Tadmor model on the open space $\O = \R^n$ since the work of Karper-Mellet-Trivisa \cite{KMT2013} require the defining kernel $\phi$ to be local and compactly supported:
\begin{equation}\label{e:KMTrR}
\l \one_{|x|\leq r} \leq \phi(x) \leq \L \one_{|x|\leq R}.
\end{equation}
This in particular prohibits the application of the Cucker-Smale flocking theorem. 
Since any $L^\infty$ power-like kernel \eqref{e:powerlike} defines a bounded communication protocol, our theorem provides a class of solutions for the Motsch-Tadmor model to which \prop{p:CS} applies.

Since the data is compactly supported, it will remain such in the course of the construction. Therefore,  the cases $\O = \R^n$ and $\O = \T^n$ are similar. Let us assume $\O = \R^n$ for definiteness.

\subsection{Approximation scheme}
Let us consider a standard mollifier $\chi \in C^\infty_0(B_1(0))$, and $\chi_\e = \e^{-n} \chi(\e^{-1}x)$. We consider the mollification of the initial data
\[
f_\e(0,x,v) =  (\chi_\e \otimes \chi_\e) \ast f_0.
\]
Since $\supp f_0 \ss B_R \times B_R$, we have $\supp f_0^\e \ss B_{R+1} \times B_{R+1}$.  Now, let us fix a reference density
\[
\orho(x) = \frac{1}{(2\pi)^n} e^{- \frac{|x|^2}{2}},
\]
and let us introduce the regularized kernel
\[
\phi^\e_\rho = (\chi_\e \otimes \chi_\e) \ast \phi_{(1-\e)  \chi_\e \ast \rho + \e \orho}.
\]
Since by \ref{i:th2} the thickness of $(1-\e)  \chi_\e \ast \rho + \e \orho$ is positive on any ball $B_R$, by \ref{i:b1} the kernel is bounded, and hence $\phi^\e_\rho\in C^\infty_\loc(\O \times \O)$.
Furthermore, the kernel satisfies all the same axioms \ref{i:f1} -- \ref{i:f3} uniformly in $\e$ for $\e<1/2$ for uniformly admissible densities $\|\rho\|_\cF <C$:
\begin{align}\label{}
\st^\e_\rho(x) &= \int_\O \phi^\e_\rho(x,y) \drho(y) = \chi_\e \ast \left( \int_\O \phi_{(1-\e)  \chi_\e \ast \rho +\e \orho} (\cdot, y)  \chi_\e \ast \rho(y)  \dy  \right) \leq 2 \oS, \label{e:Se}\\
(\st^\e_\rho)^*(y) &= \int_\O \phi^\e_\rho(x,y) \drho(x) = \chi_\e \ast \left( \int_\O \phi_{(1-\e)  \chi_\e \ast \rho +\e \orho} (x, \cdot)  \chi_\e \ast \rho(x)  \dx  \right) \leq 2 \oS^*, \label{e:Se*}\\
\int_{\O \times \O} &(\phi^\e_\rho(x,y) )^2\drho(y) \drho(x)  \leq \int_{\O \times \O}  (\chi_\e \otimes \chi_\e) \ast \phi^2_{(1-\e)  \chi_\e \ast \rho + \e \orho} \drho(y) \drho(x) \label{e:He}\\
& = \int_{\O \times \O}  \phi^2_{(1-\e)  \chi_\e \ast \rho + \e \orho} (x,y) \chi_\e \ast \rho(y) \chi_\e \ast \rho(x)\dy \dx \leq 4\oH. \notag
\end{align}

Now let us consider the regularized equation with mollified data
\begin{equation}\label{e:VAe}
\begin{split}
\p_t f_\e + v \cdot \n_x f_\e +  \n_v \cdot (  \cA_\e(f_\e) f_\e) & = 0, \\
f^\e(0,x,v)  = f^\e_0(x,v) &.
\end{split}
\end{equation}
where
\begin{equation}\label{e:Ae}
\cA_\e(f_\e)(x,v) = \int_\domain \phi^\e_{\rho_\e}(x,x') (v'-v) f_\e(x',v') \dv'\dx'.
\end{equation}
There exists a global classical solution to \eqref{e:VAe} via the mean-field limit, see for example \cite{Shv-EA}, which satisfy the classical push-forward transport representation \eqref{e:push} along the smooth characteristic flow. The flow satisfies the classical maximum principle, and therefore, $f_\e \in C_0^\infty(B_{R t + R} \times B_R)$ for all $t>0$. This implies in particular that $\rho_\e \in L^1(B_{R T + R})$ uniformly in $\e$, and hence according to our assumptions of $\cF$,  $\| \rho_\e\|_\cF \leq C_T$. Consequently, in the protocol bounds \ref{i:f1} - \ref{i:f3}, \ref{i:b1} - \ref{i:b2} dependencies on $\| \rho_\e\|_\cF$ can be eliminated. 

Next, testing the equation with $f^{p-1}_\e$ we obtain after integrating by parts in the alignment term
\begin{equation*}\label{}
\begin{split}
\ddt \int_\domain f^p_\e \dv\dx  & = n  \int_\domain  \int_\domain \phi^\e_{\rho}(x,x') f_\e^p f_\e' \dv'\dx' \dv\dx \\
& =  n  \int_\domain  \int_\O \phi^\e_{\rho}(x,x')\rho(x') \dx' f_\e^p \dv\dx \leq 2n\oS \int_\domain f^p_\e \dv\dx.
\end{split}
\end{equation*}
This shows that $f_\e \in L^p(\domain)$ uniformly.  Now, recall that the supports of $f_\e$ are contained in a common $B_R \times B_R$.  It implies that $|u_\e| \leq R$ pointwise. Hence, $|\bc^\e_{\rho_\e}(u_\e)|^p \leq \oS^p R^p$ pointwise also.  This shows that
\[
|\cA_\e(f_\e)| \leq C_1 |v| + C_2,
\]
pointwise. Consequently, $\cA_\e(f_\e) f_\e \in L^p$ uniformly.

We are in a position to apply the averaging lemma from \cite{PS1998}.

 \begin{lemma} \label{l:al}
Suppose $f_n$ and $G_n$ are bounded in $L^p_\loc(\domain \times \R)$ solve the following equation on $\O \times \R^n \times \R$
\begin{equation}\label{ }
\p_t f_n + v \cdot \n_x f_n = \n_v G_n,
\end{equation}
Let $\f \in C^\infty_0(\R^n)$. Then the macroscopic quantities
\[
\rho^{\f}_n (t,x) = \int_{\R^{n}} \f(v) f_n(t,x,v) \dv
\]
are relatively compact in $L^p_\loc(\O \times \R)$. 
\end{lemma}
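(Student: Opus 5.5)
Since this is the averaging lemma of Perthame--Souganidis \cite{PS1998}, I would reproduce the standard Fourier-analytic argument: an $L^2$ case, then extension to $L^p$, $1<p<\infty$, by interpolation. First I localize. Take $\psi\in C^\infty_0(\O\times\R)$ and set $\tilde f_n=\psi f_n$. It satisfies
\[
\p_t \tilde f_n + v\cdot\n_x \tilde f_n = \n_v\cdot(\psi G_n) + (\p_t\psi + v\cdot \n_x\psi) f_n .
\]
Multiplying also by $\f(v)$ (or a slightly larger cutoff $\tilde\f$ equal to one on $\supp \f$), all quantities become compactly supported in $(t,x,v)$ and bounded in $L^p$. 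The right-hand side has the form $\n_v\cdot H_n + K_n$ with $H_n,K_n$ bounded in $L^p$ and supported in a fixed compact set. The lower order term $K_n$ is harmless, since it is a zeroth-order source and is covered by the same argument with a better multiplier. So it suffices to show that $T(f)=\int \f(v) f\dv$ maps such solutions into a bounded set of some $W^{s,p}$, $s>0$, in $(t,x)$, or at least that $T$ is a limit in operator norm of smoothing operators. Compactness in $L^p_\loc$ then follows from Rellich--Kondrachov, or from a Kolmogorov--Riesz equicontinuity-of-translates estimate.

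For $p=2$, I Fourier transform in $(t,x)\mapsto(\t,\xi)$ to get $i(\t+v\cdot\xi)\hat f = \n_v\cdot \hat H + \hat K$. I fix a smooth cutoff $\eta$ with $\eta=1$ near $0$ and a parameter $\d>0$. Then I split
\[
\widehat{Tf}=\int \f\, \eta\Big(\tfrac{\t+v\cdot\xi}{\d\,\jap{(\t,\xi)}}\Big)\hat f\dv+\int \f\,\Big(1-\eta\Big(\tfrac{\t+v\cdot\xi}{\d\,\jap{(\t,\xi)}}\Big)\Big)\frac{\n_v\cdot\hat H+\hat K}{i(\t+v\cdot\xi)}\dv .
\]
For the first piece, Cauchy--Schwarz in $v$ together with the fact that the set $\{v\in\supp\f: |\t+v\cdot\xi|\le\d\jap{(\t,\xi)}\}$ has measure $\lesssim \d$ gives the bound $\d^{1/2}\|\hat f\|_{L^2_v}$. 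In the second piece I integrate by parts in $v$. The derivative falls either on $\f$, costing $|\t+v\cdot\xi|^{-1}$, or on the multiplier, costing $|\xi|\,|\t+v\cdot\xi|^{-2}$. On the region $|\t+v\cdot\xi|\gtrsim\d\jap{(\t,\xi)}$, integrating the square over $v$ gives a bound $\lesssim (\d^{-3/2}\jap{(\t,\xi)}^{-1}+\d^{-1/2}\jap{(\t,\xi)}^{-1})\,\|\hat H,\hat K\|_{L^2_v}$. Optimizing with $\d=\jap{(\t,\xi)}^{-1/2}$ gives the gain $\jap{(\t,\xi)}^{-1/4}$, i.e. a uniform $H^{1/4}$ bound on $T f_n$, hence compactness in $L^2_\loc$.

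For general $p$ I would not aim for Sobolev smoothness directly. Instead I write $T=T_\d^{(1)}+T_\d^{(2)}$ as above, but now as operators in physical space. The plan is to show two things. First, $\|T^{(1)}_\d\|_{L^p\to L^p}\lesssim \d^{\a_p}$. Second, $T^{(2)}_\d$ sends bounded sets of $(f,H,K)$ in $L^p$ to sets that are bounded in $W^{s,p}$ with constant $C(\d)$, hence precompact. A family that is within $\d^{\a_p}$ of a precompact set for every $\d$ is precompact, which finishes the proof. The bounds for $p=2$ were just obtained. At $p=1$ and $p=\infty$ the pieces $T^{(1)}_\d$ and $T^{(2)}_\d$ are at least bounded with polynomial dependence on $\d$, using Mikhlin/Hörmander-type control of the multipliers, or the trivial $L^\infty$ bound for $p=\infty$. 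Riesz--Thorin (or Stein) interpolation then gives $\a_p>0$ for every $1<p<\infty$.

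I expect the main obstacle to be exactly this endpoint control of the $v$-dependent multipliers $\eta\big((\t+v\cdot\xi)/\d\jap{(\t,\xi)}\big)$ outside $L^2$. These are not uniformly Mikhlin in $(\t,\xi)$ for fixed $v$. My first attempt would be to use that after averaging in $v$ against the smooth $\f$ the combined kernel becomes an honest Calderón--Zygmund-type kernel, via a Littlewood--Paley decomposition in $|(\t,\xi)|$ with the splitting parameter $\d$ chosen dyadically. Failing that, for $1<p<2$ I would instead use truncation and equi-integrability: the part of $f_n$ where $|f_n|>M$ contributes a uniformly small amount to $T f_n$ in $L^1$, while the bounded part is handled by the $L^2$ theory. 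Compactness in $L^1_\loc$ together with uniform $L^p$ bounds then gives compactness in $L^q_\loc$ for $q<p$, and a final interpolation step is needed to reach $q=p$ itself.
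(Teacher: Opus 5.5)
The paper gives no proof of this lemma; it quotes it from Perthame--Souganidis \cite{PS1998}. Your localization and your $L^2$ computation are correct. With $\zeta=(\tau,\xi)$ and $\delta=\langle\zeta\rangle^{-1/2}$ you get the standard $H^{1/4}$ gain, once the low frequencies, where the near-resonant piece is not small, are cut off and treated as trivially compact.

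The genuine gap is the passage to $p\neq 2$, which is the real content of the lemma. Specifically, you never establish the claim $\|T^{(1)}_\delta\|_{L^p\to L^p}\lesssim\delta^{\alpha_p}$, and your route to it fails on two counts.

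First, $T^{(1)}_\delta$ is not bounded on $L^1$ (nor on $L^\infty$) at all. For fixed $v$ its symbol $\eta\big((\tau+v\cdot\xi)/(\delta\langle\zeta\rangle)\big)$ is asymptotically homogeneous of degree zero and non-constant, so it is not the Fourier transform of a finite measure. Averaging against $\varphi(v)\,dv$ does not help, because $f$ may concentrate near a single velocity. Mikhlin--H\"ormander gives nothing at the endpoints.

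Second, even a bound $\lesssim\delta^{-N}$ at an endpoint would be useless. Riesz--Thorin between $\delta^{1/2}$ at $p=2$ and $\delta^{-N}$ at $p_0$ gives $\delta^{\theta/2-N(1-\theta)}$, which is small only for $p$ close to $2$. You need an endpoint bound that is \emph{uniform} in $\delta$.

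The fallbacks do not repair this:
\begin{itemize}
\item Truncating $f_n$ by a truncation $T_M$ destroys the equation. The identity $(\partial_t+v\cdot\nabla_x)T_M(f_n)=T_M'(f_n)\,\nabla_v\cdot G_n$ has lost its divergence structure, so the $L^2$ theory cannot be applied to the bounded part.
\item Splitting $G_n$ by size instead leaves a remainder controlled only in $W^{-1,1}$, since inverting the transport operator turns $\nabla_v$ into $\nabla_x$. That carries no information about strong compactness.
\item Compactness in $L^q_{\mathrm{loc}}$ for all $q<p$ plus a uniform $L^p$ bound does not imply compactness in $L^p_{\mathrm{loc}}$; think of the concentrating bumps $k^{(n+1)/p}\chi(k\,\cdot)$. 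So the announced final interpolation step does not exist.
\end{itemize}

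The missing ingredient is a bound on the near-resonant piece that is uniform in the splitting parameter on some $L^{p_0}$, $p_0\in(1,\infty)$, lying beyond $p$. There are two standard ways to get it.

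\emph{Route 1: keep your fixed-$\delta$ scheme}, using $|\zeta|$ at high frequencies.
\begin{itemize}
\item Stretching frequency space by $\delta^{-1}$ in the direction $(1,v)$ is a linear change of variables, so it preserves $L^{p_0}$-multiplier norms. It turns $\eta\big((\tau+v\cdot\xi)/(\delta|\zeta|)\big)$ into a Mikhlin symbol with bounds independent of $\delta$ and of $v\in\supp\varphi$.
\item Minkowski's inequality in $v$ then gives $\|T^{(1)}_\delta\|_{L^{p_0}\to L^{p_0}}\le C_{p_0}$. Interpolation with your $L^2$ bound yields $\delta^{\alpha_p}$.
\item Meanwhile, after integration by parts in $v$, $T^{(2)}_\delta$ maps $L^p$ into $W^{1,p}$ with constant $C(\delta)$, again by Mikhlin. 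Rellich then gives compactness in $L^p_{\mathrm{loc}}$ itself, with no detour through $q<p$.
\end{itemize}

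\emph{Route 2: follow DiPerna--Lions--Meyer.}
\begin{itemize}
\item On the dyadic block $|\zeta|\sim 2^j$, write $f=R_\lambda(\lambda f+\nabla_v\cdot G)$ with $R_\lambda=(\lambda+\partial_t+v\cdot\nabla_x)^{-1}$ and $\lambda=2^{j/2}$.
\item Commuting $\nabla_v$ through $R_\lambda$ produces $\nabla_x$ applied to an operator of $L^1$-norm $\lambda^{-2}=2^{-j}$. This exactly offsets the cost $\|P_j\nabla_x\|_{L^1\to L^1}\lesssim 2^j$ on the block.
\item So both pieces are uniformly bounded on $L^1$ and $L^\infty$ and gain $2^{-j/4}$ in $L^2$. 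Interpolation gives a $B^{s}_{p,\infty}$ bound with $s>0$ for every $1<p<\infty$.
\end{itemize}
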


By the standard diagonalization argument and extending $f_\e$ to a solution on the real line using temporal cutoffs (see \cite{Shv-EA} as an example), we can select a sequence $f_{\e_n} = f_n$ such that $f_n \to f\in L^\infty([0,T);  L^p(B_R \times B_R))$ weakly, and
\begin{equation}\label{e:macroLp}
\| \rho^{\f}_n - \rho^{\f} \|_{L^p([0,T)\times \O)} \to 0,
\end{equation}
for all $\f \in C^\infty_0(\R^n)$. In particular, since all $f_n$ are supported on $B_R \times B_R$ by choosing $\f = 1$ on $B_R$ we obtain $\rho^{\f}_n = \rho_n$. So,
\[
\| \rho_n - \rho \|_{L^p([0,T)\times \O)} \to 0.
\]
Similarly, choosing $\f = v$ on $B_R$ we obtain $\rho^{\f}_n = \rho_n u_n$. So,
\[
\| \rho_n u_n - \rho u \|_{L^p([0,T)\times \O)} \to 0.
\]

\subsection{Obtaining a weak solution}
It remains to demonstrate that $f$ is a weak solution on $[0,T) \times \O \times \R^n$. By approximation we can assume that a test function  is given by $g= \chi(t,x) \f(v) \in C_0^\infty([0,T) \times \O \times \R^n)$. Plugging it into \eqref{e:weakVA} we have
\begin{equation}\label{e:weakn}
\begin{split}
 \int_0^T\int_{\O}  \rho_{\f}^n(t,x) \p_t \chi(t, x) \dx \dt  + & \int_{\O} \rho^n_\f(0,x) \chi(0,x) \dx+ \int_0^T\int_{\O}  \rho_{v \f}^n(t,x) \cdot \n_x \chi(t,x) \dx \dt \\
 = &  \int_0^T\int_{\domain} \cA_n(f_n) f_n  \chi(t,x) \n_v \f(v) \dv \dx \dt .
\end{split}
\end{equation}
Convergence of the left hand side is trivial by \eqref{e:macroLp}. Let us focus on the right hand side.

From the definition of $\cA_n$, \eqref{e:Ae}, and by fixing another test-function $\f' = v'$ on $B_R$, we have
\begin{equation*}\label{}
\begin{split}
 \int_0^T\int_{\domain} \cA_n(f_n) f_n  \chi(t,x) \n_v \f(v) \dv \dx &=  \int_0^T\int_{\O}  \int_\O \phi^n_{\rho_n}(x,x') \rho_n^{\f'}(x') \cdot \rho_n^{\n \f}(x) \chi(t,x) \dx' \dx \dt\\
 &   -  \int_0^T\int_{\O}  \int_\O \phi^n_{\rho_n}(x,x') \rho_n(x')   \rho_n^{v\cdot \n \f}(x) \chi(t,x) \dx' \dx \dt.
\end{split}
\end{equation*}

The two integrals on the right hand side are treated similarly. So, let us show that 
\[
\int_0^T\int_{\O}  \int_\O \phi^n_{\rho_n}(x,x') \rho_n^{\f'}(x') \cdot \rho_n^{\n \f}(x) \chi(t,x) \dx' \dx \dt \to \int_0^T\int_{\O}  \int_\O \phi_{\rho}(x,x') \rho^{\f'}(x') \cdot \rho^{\n \f}(x)  \chi(t,x) \dx' \dx \dt
\]

Let us observe that for any compactly test-function $\f$,  we have pointwise estimate
\[
| \rho^\f(x,t)| \leq C_\f \rho(x,t).
\]
Let us move the $\chi$-convolutions back on the densities:
\begin{multline*}
\int_0^T\int_{\O\times \O} \phi^n_{\rho_n}(x,x') \rho_n^{\f'}(x') \cdot \rho_n^{\n \f}(x)  \chi(t,x)  \dx' \dx \dt \\
= \int_0^T\int_{\O\times \O} \phi_{(1-\e_n)  \chi_{\e_n} \ast \rho_n + \e_n \orho}(x,x') \chi_{\e_n} \ast \rho_n^{\f'}(x') \cdot \chi_{\e_n} \ast ( \rho_n^{\n \f} \chi)(x) \dx' \dx \dt.
\end{multline*}

Note that for any $t\in [0,T]$ by \eqref{e:He}  we have
\[
\left| \int_{\O\times \O} \phi_{(1-\e_n)  \chi_{\e_n} \ast \rho_n + \e_n \orho}(x,x') \chi_{\e_n} \ast \rho_n^{\f'}(x') \cdot \chi_{\e_n} \ast ( \rho_n^{\n \f} \chi)(x) \dx' \dx \right| \leq C,
\]
uniformly in $n$. Therefore, for any subset $E \ss [0,T]$, 
\begin{equation}\label{e:Esmall}
\begin{split}
\left| \int_E \int_{\O\times \O} \phi_{(1-\e_n)  \chi_{\e_n} \ast \rho_n + \e_n \orho}(x,x') \chi_{\e_n} \ast \rho_n^{\f'}(x') \cdot \chi_{\e_n} \ast ( \rho_n^{\n \f} \chi)(x) \dx' \dx \dt \right| & \leq C |E|\\
\left| \int_E \int_{\O}  \int_\O \phi_{\rho}(x,x') \rho^{\f'}(x') \cdot \rho^{\n \f}(x)  \chi(t,x) \dx' \dx \dt \right| & \leq C |E|
\end{split}
\end{equation}

Since
\[
\|(1-\e_n)  \chi_{\e_n} \ast \rho_n + \e_n \orho - \rho \|_{L^1([0,T] \times \O} \to 0,
\]
by Egorov's Theorem, for any $\d_1>0$ there exists a subset $E_{\d_1} \ss [0,T]$ of measure $|E_{\d_1}| < \d_1$ such that 
\begin{equation}\label{e:Eunif}
\sup_{t\in [0,T] \backslash E_{\d_1}} \|(1-\e_n)  \chi_{\e_n} \ast \rho_n(t) + \e_n \orho(t) - \rho(t) \|_{L^1(\O)} \to 0.
\end{equation}
Now let us fix an arbitrary small $\d_2>0$, and consider the set $S_{\d_2} = \{ (t,x,y): \th_{\rho(t)}(x,y) > \d_2\}$.  In view of \ref{i:th3} and \eqref{e:Eunif} starting from some large $n>N$ we have
$\th_{(1-\e_n)  \chi_{\e_n} \ast \rho_n(t) + \e_n \orho(t)}(x,y) > \d_2 / 2$ for all $(t,x,x') \ss S_{\d_2} \cap (E^c_{\d_1} \times \O \times \O)$.  Then by \ref{i:b2} for all such points uniformly we have
\begin{equation}\label{e:kerlimit}
| \phi_{(1-\e_n)  \chi_{\e_n} \ast \rho_n + \e_n \orho}(x,x') - \phi_{\rho}(x,x') | = o(1).
\end{equation}

Let us now estimate the limit with $\d_1$ and $\d_2$ fixed and letting $n\to \infty$:
\begin{equation*}\label{}
\begin{split}
& \left| \int_0^T\int_{\O}  \int_\O \phi^n_{\rho_n}(x,x') \rho_n^{\f'}(x') \cdot \rho_n^{\n \f}(x) \chi(t,x) \dx' \dx \dt  -  \int_0^T\int_{\O}  \int_\O \phi_{\rho}(x,x') \rho^{\f'}(x') \cdot \rho^{\n \f}(x)  \chi(t,x) \dx' \dx \dt \right| \\
& \leq C \d_1+  \left| \int_{E^c_{\d_1}}\int_{\O}  \int_\O  \phi_{(1-\e_n)  \chi_{\e_n} \ast \rho_n + \e_n \orho}(x,x') \chi_{\e_n} \ast \rho_n^{\f'}(x') \cdot \chi_{\e_n} \ast ( \rho_n^{\n \f} \chi)(x) \dx' \dx \dt  \right. \\
&\hspace{3.1in} \left. -  \int_{E^c_{\d_1}}\int_{\O}  \int_\O \phi_{\rho}(x,x') \rho^{\f'}(x') \cdot \rho^{\n \f}(x)  \chi(t,x) \dx' \dx \dt \right|\\
& \leq C \d_1+  \left| \int_{S_{\d_2} \cap (E^c_{\d_1} \times \O \times \O)}  \phi_{(1-\e_n)  \chi_{\e_n} \ast \rho_n + \e_n \orho}(x,x') \chi_{\e_n} \ast \rho_n^{\f'}(x') \cdot \chi_{\e_n} \ast ( \rho_n^{\n \f} \chi)(x) \dx' \dx \dt  \right. \\
&\hspace{3.1in} \left. -  \int_{S_{\d_2} \cap (E^c_{\d_1} \times \O \times \O)} \phi_{\rho}(x,x') \rho^{\f'}(x') \cdot \rho^{\n \f}(x)  \chi(t,x) \dx' \dx \dt \right|\\
& +  \left| \int_{\th_\rho \leq \d_2}  \phi_{(1-\e_n)  \chi_{\e_n} \ast \rho_n + \e_n \orho}(x,x') \chi_{\e_n} \ast \rho_n^{\f'}(x') \cdot \chi_{\e_n} \ast ( \rho_n^{\n \f} \chi)(x) \dx' \dx \dt  \right. \\
&\hspace{3.1in} \left. -  \int_{\th_\rho \leq \d_2} \phi_{\rho}(x,x') \rho^{\f'}(x') \cdot \rho^{\n \f}(x)  \chi(t,x) \dx' \dx \dt \right|.
\end{split}
\end{equation*}
Let us examine the first difference:
\begin{equation*}\label{}
\begin{split}
 &\left| \int_{S_{\d_2} \cap (E^c_{\d_1} \times \O \times \O)}  \phi_{(1-\e_n)  \chi_{\e_n} \ast \rho_n + \e_n \orho}(x,x') \chi_{\e_n} \ast \rho_n^{\f'}(x') \cdot \chi_{\e_n} \ast ( \rho_n^{\n \f} \chi)(x) \dx' \dx \dt  \right. \\
&\hspace{3.1in} \left. -  \int_{S_{\d_2} \cap (E^c_{\d_1} \times \O \times \O)} \phi_{\rho}(x,x') \rho^{\f'}(x') \cdot \rho^{\n \f}(x)  \chi(t,x) \dx' \dx \dt \right|\\
\leq &  \left| \int_{S_{\d_2} \cap (E^c_{\d_1} \times \O \times \O)} [ \phi_{(1-\e_n)  \chi_{\e_n} \ast \rho_n + \e_n \orho}(x,x') - \phi_\rho(x,x')] \chi_{\e_n} \ast \rho_n^{\f'}(x') \cdot \chi_{\e_n} \ast ( \rho_n^{\n \f} \chi)(x) \dx' \dx \dt  \right| \\
+ & \left| \int_{S_{\d_2} \cap (E^c_{\d_1} \times \O \times \O)} \phi_{\rho}(x,x')[\chi_{\e_n} \ast \rho_n^{\f'}(x') \cdot \chi_{\e_n} \ast ( \rho_n^{\n \f} \chi)(x) -  \rho^{\f'}(x') \cdot \rho^{\n \f}(x)  \chi(t,x) ]\dx' \dx \dt \right| \leq o(1),
\end{split}
\end{equation*}
by \eqref{e:kerlimit}, \ref{i:b1}, and \eqref{e:macroLp}.

Now let us estimate the integrals over ${\th_\rho \leq \d_2}$. We in fact estimate the integrals individually using the Hilber-Schmidt bounds \ref{i:f1} and \eqref{e:He}:
\begin{equation*}\label{}
\begin{split}
& \left| \int_{\th_\rho \leq \d_2}  \phi_{(1-\e_n)  \chi_{\e_n} \ast \rho_n + \e_n \orho}(x,x') \chi_{\e_n} \ast \rho_n^{\f'}(x') \cdot \chi_{\e_n} \ast ( \rho_n^{\n \f} \chi)(x) \dx' \dx \dt  \right. \\
&\hspace{3.1in} \left. -  \int_{\th_\rho \leq \d_2} \phi_{\rho}(x,x') \rho^{\f'}(x') \cdot \rho^{\n \f}(x)  \chi(t,x) \dx' \dx \dt \right| \\
\lesssim & \sqrt{\oH}\left( \int_{\th_\rho \leq \d_2} \chi_{\e_n} \ast \rho_n^{\f'}(x') \cdot \chi_{\e_n} \ast ( \rho_n^{\n \f} \chi)(x) \dx' \dx \dt \right)^{1/2} + \sqrt{\oH}\left( \int_{\th_\rho \leq \d_2} \rho^{\f'}(x') \cdot \rho^{\n \f}(x) \dx' \dx \dt \right)^{1/2}  \\
\lesssim & o(1) + \left( \int_{\th_\rho \leq \d_2} \rho^{\f'}(x') \cdot \rho^{\n \f}(x) \dx' \dx \dt \right)^{1/2} \lesssim o(1) + \left(  \int_{\th_\rho \leq \d_2} \rho(x') \cdot \rho(x) \dx' \dx \dt  \right)^{1/2} 
\end{split}
\end{equation*}
Since by \ref{i:th1} we have $\rho\otimes \rho( \th_\rho = 0) = 0$, then we can pick $\d_2>0$ so that $ \int_{\th_\rho \leq \d_2} \rho(x') \cdot \rho(x) \dx' \dx \dt $ is arbitrarily small. This concludes the proof.

\section{Applications}\label{s:applications}

In this section we will vastly expand the list of examples of bounded protocols building upon the classical ones presented in \sect{s:examples}, and others introduced in \cite{Shv-EA}. Once all the conditions of \defin{d:protocol} are checked, the results proved above apply to the corresponding Vlasov-alignment model based upon such protocol.

We start by introducing a family of protocols that naturally encompass the classical \ref{CS}, \ref{MT},  and  \ref{MT*}.

\subsection{$\g$-protocols}  

Let us fix a local defining kernel $\phi \in L^\infty(\O)$ with
\begin{equation}\label{e:floc}
\phi(x) \geq \l \one_{|x|\leq r}.
\end{equation}
To make a proper sense of the convolutions $\rho \ast \phi$ used below, we will assume that the family of admissible densities is given by $\cF_q = L^1_q(\O) \cap \cP(\O)$ if $\phi \in L^\infty(\O)$, and $\cF_q = \cP_q(\O)$ if $\phi \in C_b(\O)$, for some $q\geq 0$ (the use of higher weights $q>0$ will only be necessary in the unbounded settings). Define
\begin{equation}\label{Cg}
\begin{split}
\phi_\rho(x,y) & =  \frac{\phi(x-y)}{(\rho \ast \phi(x))^{\g_1}(\rho \ast \phi(y))^{\g_2}}, \quad  \g_1,\g_2, \g_1+ \g_2 \leq 1,\\
 \th_\rho(x,y) & = (\rho \ast \phi(x))^{\epsilon_1}  (\rho \ast \phi(y))^{\epsilon_2}, \quad \epsilon_i = 1 \text{ if } \g_i>0, \text{ and } \epsilon_i=0 \text{ if } \g_i\leq 0. 
\end{split} \tag{$\cC_{\g_1,\g_2}$}
\end{equation}
Thus, $\cC_{0,0}$ = \ref{CS},  $\cC_{1,0}$ = \ref{MT},  $\cC_{0,1}$ = \ref{MT*}, see \fig{f:gamma}.
Condition \ref{i:th1} is verified due to the locality. Indeed, if $\epsilon_1 = \epsilon_2 = 1$, then any point where $\th_\rho(x,y) = 0$ will have $\rho(B_r(x) ) \rho(B_r(y)) = 0$ and hence the set $\{\th_\rho = 0\}$ will not intersect the topological support of $\rho \otimes \rho$. If say $\epsilon_1=1$ and $\epsilon_2=0$, then the same conclusion holds for $\supp \rho_x$. If $\epsilon_1 = \epsilon_2 = 0$, then the set $\{\th_\rho = 0\}$ is empty. Condition \ref{i:th2} obviously holds for $R'  = R+r$.

\begin{figure}
	\centering
	\includegraphics[width=2in]{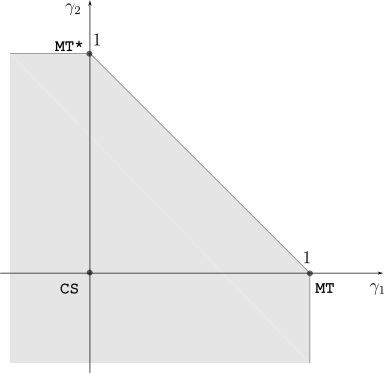}
	\caption{The allowed range of $(\g_1,\g_2)$ in $\gamma$-protocols.}
	\label{f:gamma}
\end{figure}

The communication operator is given by
\begin{equation}\label{ }
\bc_\rho(u) = \frac{1}{ (\rho \ast \phi)^{\g_1} }\left( \frac{u \rho}{ (\rho \ast \phi)^{\g_2} } \right)\ast \phi,
\end{equation}
and the strengths are given by
\begin{equation}\label{ }
\st_\rho =  \frac{1}{ (\rho \ast \phi)^{\g_1} }\left( \frac{\rho}{ (\rho \ast \phi)^{\g_2} } \right)\ast \phi, \quad \st_\rho^* =  \frac{1}{ (\rho \ast \phi)^{\g_2} }\left( \frac{\rho}{ (\rho \ast \phi)^{\g_1} } \right)\ast \phi.
\end{equation}

Proving  conditions \ref{i:f1}-\ref{i:f3} on the torus $\O = \T^n$ reduces to the following lemma, proved in \cite{KMT2013} for the case of $\g =1$, and for general $\g$ in \cite{Shv-EA}.

\begin{lemma}\label{} Let $\O = \T^n$.
For any $0 \leq \g \leq 1$ there exists $C>0$ such that for all $\rho \in \cF_0(\T^n)$ one has 
\begin{align}
\left( \frac{\rho}{(\rho \ast \phi)^\g} \right) \ast \phi(x) & \leq C (\rho \ast \phi(x))^{1-\g}, \label{e:KMTb}\\
 \int_{\O} \frac{\drho(x)}{\rho\ast \phi(x) } & \leq C.\label{e:KMT}
\end{align}
Consequently, \ref{Cg} defines a bounded  communication protocol on $\cF_0(\T^n)$.
\end{lemma}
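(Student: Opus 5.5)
The plan is to prove the two pointwise inequalities \eqref{e:KMTb} and \eqref{e:KMT} by a covering argument that exploits the locality \eqref{e:floc} and the boundedness $\phi\le \|\phi\|_\infty$. Then we verify \ref{i:f1}--\ref{i:f3}, \ref{i:b1}--\ref{i:b2} and \ref{i:th3} for \ref{Cg} directly from these bounds.

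\emph{Step 1: comparability of $\rho\ast\phi$ at nearby points.} Cover $\T^n$ by finitely many balls $B_{r/2}(z_k)$, $k=1,\dots,K$, with $K$ depending only on $r,n$. If $|x-z_k|<r/2$ then $B_{r/2}(z_k)\ss B_r(x)$, so $\rho\ast\phi(x)\ge \l\,\rho(B_{r/2}(z_k))$. For \eqref{e:KMT}, split
\[
\int \frac{\drho}{\rho\ast\phi}\le \sum_k \int_{B_{r/2}(z_k)}\frac{\drho(x)}{\l\rho(B_{r/2}(z_k))}\le K/\l .
\]
For \eqref{e:KMTb}, the main computation is
\[
\left(\frac{\rho}{(\rho\ast\phi)^\g}\right)\ast\phi(x)\le \|\phi\|_\infty\int \frac{\drho(y)}{(\rho\ast\phi(y))^\g}.
\]
By Jensen/H\"older with respect to $\rho$ restricted to each ball, this is $\lesssim \sum_k \rho(B_{r/2}(z_k))^{1-\g}\le K$. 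That gives a uniform bound, but \eqref{e:KMTb} needs the factor $(\rho\ast\phi(x))^{1-\g}$, which is the hard step. To get it, I would localize to the support of $\phi$. If $\phi$ is supported in $B_{R_0}$ (on $\T^n$ we may take $R_0\sim\operatorname{diam}$, which is harmless only if we restrict the sum to balls meeting $B_{R_0}(x)$), then I would bound $\sum_{k:\,B_{r/2}(z_k)\cap B_{R_0}(x)\neq\emptyset}\rho(B_{r/2}(z_k))^{1-\g}$ by $K^{\g}\bigl(\sum_k\rho(B_{r/2}(z_k))\bigr)^{1-\g}$. The latter is at most $K^\g\rho(B_{R_0+r}(x))^{1-\g}$, and this must be controlled by $\rho\ast\phi(x)$. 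On the torus with merely $\phi\ge\l\one_{B_r}$, that comparison fails unless one uses $\rho\ast\phi(x)$ itself as the weight.

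So the argument I would actually run is a pointwise one on the integrand: write $\phi(x-y)\le \|\phi\|_\infty$ only where it is nonzero, and use $\rho\ast\phi(y)\ge c\,\rho\ast\phi(x)$-type comparisons along chains of $r/2$-balls. I expect this to be the main obstacle. As a fallback, since $\O$ is compact and the claimed consequence is only the Schur bounds \ref{i:f2}--\ref{i:f3}, I would verify those directly. Using the factorization
\[
\st_\rho=\frac{1}{(\rho\ast\phi)^{\g_1}}\left(\frac{\rho}{(\rho\ast\phi)^{\g_2}}\right)\ast\phi ,
\]
I would treat the cases $\g_i\le 0$ trivially via $(\rho\ast\phi)^{-\g_i}\le\|\phi\|_\infty^{-\g_i}$. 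For $\g_1,\g_2\ge0$ I would combine \eqref{e:KMTb} with exponent $\g_2$, which gives $\st_\rho\lesssim(\rho\ast\phi)^{1-\g_1-\g_2}\le C$ since $\g_1+\g_2\le1$, and symmetrically for $\st^*_\rho$. For \ref{i:f1} I would note $\phi_\rho^2\le \|\phi\|_\infty\,\phi_\rho\,(\rho\ast\phi(x))^{-\g_1}(\rho\ast\phi(y))^{-\g_2}$ and reduce to \eqref{e:KMT} after integrating in $y$ via \ref{i:f2}.

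\emph{Step 2: remaining axioms.} \ref{i:b1} holds because $\phi_\rho\le\|\phi\|_\infty\th_\rho^{-1}$ up to powers, using $\rho\ast\phi\le\|\phi\|_\infty$ to absorb the exponents. For \ref{i:th3} and \ref{i:b2}, observe that $|\rho'\ast\phi-\rho''\ast\phi|\le\|\phi\|_\infty\|\rho'-\rho''\|_\TV$. Then apply the mean value theorem to $s\mapsto s^{-\g_i}$ on the interval bounded below by the thicknesses $\th_{\rho'},\th_{\rho''}$, which produces constants depending only on these thicknesses, as \ref{i:b2} requires.
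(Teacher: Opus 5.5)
Your covering proof of \eqref{e:KMT} is correct. The paper itself does not prove \eqref{e:KMTb}--\eqref{e:KMT}; it cites them from \cite{KMT2013,Shv-EA} and only deduces the protocol axioms. The genuine gap is \eqref{e:KMTb} for $0<\g<1$, which you never establish.

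\begin{itemize}
\item Your localization attempt is abandoned.
\item The pointwise route you say you would ``actually run'' cannot work. Comparisons $\rho\ast\phi(y)\geq c\,\rho\ast\phi(x)$ fail even for $y\in\supp\rho$ with $\phi(x-y)>0$. Take $\phi=\one_{B_r}$ and let $\rho$ consist of uniform densities of masses $1-m$ and $m$ on tiny balls around $0$ and $\tfrac32 r e_1$. At $x=\tfrac34 r e_1$ one has $\rho\ast\phi(x)=1$, while $\rho\ast\phi(y)=m$ on the second ball.
\item Your fallback for \ref{i:f2}--\ref{i:f3} is circular, because it invokes \eqref{e:KMTb} with exponent $\g_2$. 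For $\g_1=\g_2=\tfrac12$ the bound $\st_\rho\leq C$ is equivalent to \eqref{e:KMTb} with $\g=\tfrac12$, so there is no way around it.
\item In \ref{i:f1}, the $y$-integral of $\phi_\rho(x,y)(\rho\ast\phi(y))^{-\g_2}$ is not $\st_\rho(x)$, so \ref{i:f2} does not apply. You need \eqref{e:KMTb} with exponent $2\g_2$ (after arranging $\g_2\leq\tfrac12$, as the paper does), followed by \eqref{e:KMT}.
\end{itemize}

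The missing idea is the one you gestured at: use $\rho\ast\phi(x)$ itself as the weight. Do not discard $\phi(x-y)$ by bounding it with $\|\phi\|_\infty$; that loses exactly the factor you need. Instead integrate against $\phi(x-y)\,d\rho(y)$, a measure of total mass $\rho\ast\phi(x)$, and apply H\"older with exponents $1/\g$ and $1/(1-\g)$:
\[
\int\frac{\phi(x-y)}{(\rho\ast\phi(y))^{\g}}\,d\rho(y)\leq\Big(\int\frac{\phi(x-y)}{\rho\ast\phi(y)}\,d\rho(y)\Big)^{\g}\big(\rho\ast\phi(x)\big)^{1-\g}\leq\Big(\|\phi\|_\infty\int\frac{d\rho}{\rho\ast\phi}\Big)^{\g}\big(\rho\ast\phi(x)\big)^{1-\g}.
\]
Combined with your bound $\int d\rho/\rho\ast\phi\leq K/\l$, this gives \eqref{e:KMTb} with $C=(\|\phi\|_\infty K/\l)^{\g}$.

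In the example above, the ``bad'' points $y$ carry only mass $m$ and contribute $m^{1-\g}$. H\"older sees this; pointwise comparison does not.

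With this step inserted, the rest of your plan coincides with the paper's argument: Schur bounds from \eqref{e:KMTb}, the Hilbert--Schmidt bound from \eqref{e:KMTb} at exponent $2\g_2$ plus \eqref{e:KMT}, and \ref{i:b1}--\ref{i:b2} from lower bounds on the thickness. One caveat is shared with the paper: ``symmetrically for $\st^*_\rho$'' tacitly assumes $\phi$ is even, since $\st^*_\rho$ involves convolution with $\phi(-\cdot)$.
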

\begin{proof}
To see \ref{i:f2}, if $0\leq \g_2 \leq 1$, then from \eqref{e:KMTb} we obtain
\[
\st_\rho(x) \leq C  (\rho \ast \phi)^{1-\g_1 - \g_2} \leq C'.
\]
If $\g_2<0$, then we can ignore the denominator and obtain
\[
\st_\rho(x) \leq C  (\rho \ast \phi)^{1 - \g_1} \leq C'.
\]
For the adjoint function the argument is similar.

The Hilbert-Schmidt condition \ref{i:f1} follows from \eqref{e:KMT}. Indeed,
\begin{equation*}\label{}
\begin{split}
 \int_{\O \times \O} \phi^2_\rho(x,y) \drho(y) \drho(x) \leq \|\phi\|_\infty \int_{\O \times \O}  \frac{\phi(x-y)}{(\rho \ast \phi(x))^{2\g_1}(\rho \ast \phi(y))^{2\g_2}} \drho(y) \drho(x).
\end{split}
\end{equation*}
Since $\g_1+ \g_2 \leq 1$ one of the exponents must not exceed $1/2$. Let us assume for definiteness that $\g_2 \leq \frac12$, or $2\g_2 \leq 1$. If also $\g_2 \geq 0$, then  \eqref{e:KMT}  applies to show 
\[
\lesssim \int_\O (\rho \ast \phi(x))^{1-2\g_1-2\g_2} \drho(x) \lesssim \int_{\O} \frac{\drho(x)}{\rho\ast \phi(x) } \leq C.
\]
If $\g_2 <0$, then we can simply ignore the $\g_2$-term, and noting that $2\g_1 -1 \leq 1$, we obtain
\[
\lesssim \int_\O (\rho \ast \phi(x))^{1-2\g_1} \drho(x) \lesssim \int_{\O} \frac{\drho(x)}{\rho\ast \phi(x) } \leq C.
\]
\end{proof}

Conditions \ref{i:b1} - \ref{i:b2} are trivial since positivity of thickness makes the denominator positive as well.

In the open space the verification of \ref{i:f1} - \ref{i:f3} is more subtle due to long range interactions. In fact, \eqref{e:KMTb} was proved originally in \cite{KMT2013} on $\O = \R^n$ for the case $\g=1$ and local kernels with compact support \eqref{e:KMTrR}.
Here, $C \sim (R/r)^n$. The condition \eqref{e:KMTrR} prohibits kernels with fat tails which is the main class for the application of the Cucker-Smale theorem. Nonetheless, for power-like kernels the protocol becomes bounded in the class of densities in $\cF_q$ for any $q$ large enough.

\begin{lemma}\label{l:KMTopen} Let $\O = \R^n$. Suppose for some $\l, \L, \b >0$, a kernel $\phi \in L^\infty(\R^n)$ satisfies
\begin{equation}\label{ }
\frac{\l}{\jap{x}^\b} \leq \phi(x) \leq \frac{\L}{\jap{x}^\b}.
\end{equation}
Then for any $q \geq \b$, $0\leq \g \leq 1$, and $\g_1,\g_2$ as above, we have
\begin{align}
\left( \frac{\rho}{(\rho \ast \phi)^\g} \right) \ast \phi(x) & \leq C \|\rho\|_{\cP_q}^{2 \b/q} (\rho \ast \phi(x))^{1-\g} , \label{e:KMTF}\\
\int_{\O \times \O} \phi_\rho^2(x,y) \drho(y) \drho(x) & \leq C\|\rho\|_{\cP_q}^{4\b/q}.\label{e:KMTHS}
\end{align}
Consequently, \ref{Cg} defines a bounded  communication protocol on $\cF_q(\R^n)$.
\end{lemma}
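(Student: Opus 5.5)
The heart of the matter is a pointwise comparison of $\phi(x-z)$ and $\phi(y-z)$ through Peetre's inequality $\jap{y-z}^{\b} \leq 2^{\b/2}\jap{x-y}^{\b}\jap{x-z}^{\b}$. Combined with the power bounds this gives
\[
\phi(x-z) \leq \frac{\L}{\l}\, 2^{\b/2} \jap{x-y}^\b\, \phi(y-z), \qquad\text{hence}\qquad \rho\ast\phi(x) \leq C \jap{x-y}^\b \rho\ast\phi(y).
\]
The step I expect to be hardest is producing a \emph{lower} bound on $\rho\ast\phi(y)$ that involves only moments of $\rho$, so that the weight $\jap{x-y}^\b$ can be traded for $\|\rho\|_{\cP_q}$. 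For this, pick $R$ with $\rho(B_R)\ge 1/2$; by Chebyshev we may take $R \sim \|\rho\|_{\cP_q}^{1/q}$. Then for every $y$,
\[
\rho\ast\phi(y) \geq \frac{\l}{2}\jap{|y|+R}^{-\b} \gtrsim \|\rho\|_{\cP_q}^{-\b/q}\jap{y}^{-\b},
\]
and trivially $\rho\ast\phi \leq \L$.

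To prove \eqref{e:KMTF}, write
\[
\Big(\frac{\rho}{(\rho\ast\phi)^\g}\Big)\ast\phi(x)=\int \frac{\phi(x-y)}{(\rho\ast\phi(y))^{\g}}\drho(y).
\]
Split $y$ into a near region $|y-x|\le \jap{x}/2$ and its complement. In the near region the comparison above gives $\rho\ast\phi(y) \gtrsim \jap{x-y}^{-\b}\rho\ast\phi(x)$, so with $\g\le1$ the integrand is bounded by
\[
(\rho\ast\phi(x))^{-\g}\,\jap{x-y}^{\b\g}\,\phi(x-y).
\]
Here I will not lose the factor $\jap{x-y}^{\b\g}$ blindly. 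Instead, use $\phi(x-y)\jap{x-y}^{\b\g}\lesssim \phi(x-y)^{1-\g}$ together with Jensen/Hölder, $\int \phi^{1-\g}\drho \le (\rho\ast\phi)^{1-\g}$, which yields exactly the bound $(\rho\ast\phi(x))^{1-\g}$. In the far region, use the moment lower bound $(\rho\ast\phi(y))^{-\g}\lesssim \|\rho\|_{\cP_q}^{\g\b/q}\jap{y}^{\b\g}$ and absorb $\jap{y}^{\b\g}\lesssim \jap{x-y}^{\b\g}+\jap{x}^{\b\g}$. Then compare with $(\rho\ast\phi(x))^{1-\g} \gtrsim \|\rho\|_{\cP_q}^{-(1-\g)\b/q}\jap{x}^{-\b(1-\g)}$; the decay $\phi(x-y)\lesssim\jap{x}^{-\b}$ in the far region closes the estimate. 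Collecting the powers of $\|\rho\|_{\cP_q}$ gives at most $\|\rho\|_{\cP_q}^{2\b/q}$; the condition $q\ge\b$ ensures $\int\jap{y}^{\b}\drho \le \|\rho\|_{\cP_q}$ wherever a $\b$-moment appears.

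With \eqref{e:KMTF} in hand, conditions \ref{i:f2}--\ref{i:f3} follow exactly as in the torus lemma, by cases on the signs of $\g_1,\g_2$, since $\rho\ast\phi\le\L$.

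For \eqref{e:KMTHS}, bound $\phi^2\le\L\phi$ and assume $\g_2\le 1/2$ as in the torus case. Apply \eqref{e:KMTF} with $\g=2\g_2$ to reduce to $\int(\rho\ast\phi)^{1-2\g_1-2\g_2}\drho$, which is at most $\int\frac{\drho}{\rho\ast\phi}$. By the moment lower bound this is $\lesssim \|\rho\|_{\cP_q}^{\b/q}\int\jap{x}^\b\drho \lesssim \|\rho\|_{\cP_q}^{2\b/q}$ up to the stated exponent. The case $\g_2<0$ is handled by ignoring that factor as before.

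The thickness axioms \ref{i:th1}--\ref{i:th2} and conditions \ref{i:b1}--\ref{i:b2} are verified exactly as in the torus setting, using $\th_\rho>0$ and the Lipschitz dependence of $\rho\ast\phi$ on $\rho$ in total variation.
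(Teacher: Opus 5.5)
The gap is in your near-region estimate for \eqref{e:KMTF}, and it is not cosmetic. After the Peetre comparison, the integrand is bounded by $(\rho\ast\phi(x))^{-\g}\jap{x-y}^{\b\g}\phi(x-y)$. The bound $\jap{x-y}^{\b\g}\phi(x-y)\lesssim \phi(x-y)^{1-\g}$ together with Jensen then controls $\int \phi(x-y)^{1-\g}\drho(y)$ by $(\rho\ast\phi(x))^{1-\g}$. But the prefactor $(\rho\ast\phi(x))^{-\g}$ is still there. What you have actually proved is
\[
\int_{|y-x|\le \jap{x}/2} \frac{\phi(x-y)}{(\rho\ast\phi(y))^{\g}}\drho(y) \lesssim (\rho\ast\phi(x))^{1-2\g},
\]
which misses the target by the factor $(\rho\ast\phi(x))^{-\g}$. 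This factor is not bounded by any function of $\|\rho\|_{\cP_q}$. For $\rho$ uniform on the unit ball, $\rho\ast\phi(x)\sim\jap{x}^{-\b}$, so the loss is $\jap{x}^{\b\g}\to\infty$. For that $\rho$ the near region carries no mass at all when $|x|$ is large, which shows where the loss comes from: applying Jensen over all of $\rho$ throws away the fact that $\rho$ has little mass where $|y|\sim|x|$. A symptom is that your proof of \eqref{e:KMTF} never uses a moment of order $\b$, although this is exactly where $q\ge\b$ is needed.

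The missing ingredient is the smallness of the mass of $N=\{|y-x|\le\jap{x}/2\}$. On $N$ one has $\jap{y}\gtrsim\jap{x}$, so $\rho(N)\lesssim \jap{x}^{-\b}\int\jap{y}^{\b}\drho\le \jap{x}^{-\b}\|\rho\|_{\cP_q}^{\b/q}$. Use H\"older on $N$ instead of Jensen on the whole space: $\int_N\phi^{1-\g}\drho\le \rho(N)^{\g}(\rho\ast\phi(x))^{1-\g}$. The near-region contribution then becomes $\bigl(\rho(N)/\rho\ast\phi(x)\bigr)^{\g}(\rho\ast\phi(x))^{1-\g}$. Your moment lower bound $\rho\ast\phi(x)\gtrsim\|\rho\|_{\cP_q}^{-\b/q}\jap{x}^{-\b}$ turns the first factor into $\|\rho\|_{\cP_q}^{2\b\g/q}$, which closes the estimate.

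This is in substance what the paper does. It applies the lower bound $(\rho\ast\phi(y))^{-\g}\lesssim\|\rho\|_{\cP_q}^{\g\b/q}\jap{y}^{\b\g}$ everywhere, with no Peetre comparison. It then splits into $|y|<\frac12|x|$, $|y|>2|x|$ and the shell $\frac12|x|\le|y|\le 2|x|$, and in the shell bounds the mass by $\jap{x}^{-\b}\int\jap{y}^\b\drho$.

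The rest of your argument is fine once \eqref{e:KMTF} is repaired: the far-region estimate, the deduction of \ref{i:f2}--\ref{i:f3}, and the Hilbert--Schmidt reduction via \eqref{e:KMTF} with $\g=2\g_2$. The latter is a different but valid route compared with the paper's direct three-region computation.
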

\begin{proof}
The proof of \ref{i:f2}-\ref{i:f3} goes as before provided we establish \eqref{e:KMTb}. So, let us focus on the latter. 

We first observe the exact asymptotic behavior of the convolution:
\begin{equation}\label{e:asymrhof}
 \frac{c \|\rho\|_{\cF_q}^{-\b/q}}{\jap{x}^\b} \leq \rho \ast \phi(x) \leq \frac{C \|\rho\|^{\b/q}_{\cF_q}}{\jap{x}^\b}.
\end{equation}
Indeed, from above, we have
\[
\rho \ast \phi(x)  \leq \L \int_{\R^n} \frac{\drho(y)}{\jap{x-y}^\b}.
\]
Let us split the integral into three regions: $|y| <\frac12 |x|$, $|y| > 2 |x|$, and $\frac12|x| \leq |y| \leq 2|x|$. We have
\begin{equation*}\label{}
\begin{split}
\int_{|y| <\frac12 |x|} \frac{\drho(y)}{\jap{x-y}^\b} & \lesssim \frac{1}{\jap{x}^\b} \\
\int_{|y| > 2 |x|} \frac{\drho(y)}{\jap{x-y}^\b} & \lesssim \int_{|y| > 2 |x|} \frac{\drho(y)}{\jap{y}^\b} \lesssim \frac{1}{\jap{x}^\b} \int_{|y| > 2 |x|} \drho(y) \leq  \frac{1}{\jap{x}^\b} \\
\int_{ \frac12|x| \leq |y| \leq 2|x| } \frac{\drho(y)}{\jap{x-y}^\b} & \leq \int_{ \frac12|x| <|y|<2|x| } \drho(y) \lesssim  \frac{1}{\jap{x}^\b} \int_{ \frac12|x| <|y|<2|x| } \jap{y}^\b \drho(y) \leq \frac{\|\rho\|^{\b/q}_{\cP_q}}{\jap{x}^\b} .
\end{split}
\end{equation*}
Now, from below we have
\[
\rho \ast \phi(x)  \geq \l \int_{\R^n} \frac{\drho(y)}{\jap{x-y}^\b}.
\]
Note that for $R = (2 \|\rho\|_{\cF_q})^{1/q}$ we have $\int_{|y| >R} \drho(y)  < \frac12$. So, the measure is substantially concentrated in the ball of radius $R$: $\int_{|y|  \leq R} \drho(y)  \geq  \frac12$. So, now if $|x| > 2R$, then
\[
\int_{\R^n} \frac{\drho(y)}{\jap{x-y}^\b} \geq \int_{|y|  \leq R} \frac{\drho(y)}{\jap{x-y}^\b} \gtrsim  \frac{1}{\jap{x}^\b} \int_{|y| \leq R} \drho(y) \geq \frac{1}{2\jap{x}^\b}.
\]
But if $|x| \leq 2R$, then
\[
\int_{\R^n} \frac{\drho(y)}{\jap{x-y}^\b} \geq \frac{c}{R^\b} \int_{|y| \leq R} \drho(y) \geq \frac{c}{2R^\b} \geq \frac{c}{2R^\b} \frac{1}{\jap{x}^\b} \gtrsim   \|\rho\|_{\cP_q}^{-\b/q}\frac{1}{\jap{x}^\b} .
\]
This proves \eqref{e:asymrhof}.

To prove \eqref{e:KMTb} we use the established asymptotics:
\[
\left( \frac{\rho}{(\rho \ast \phi)^\g} \right) \ast \phi(x)  \lesssim \|\rho\|_{\cF_q}^{\g \b/q} \int_{\R^n} \frac{\jap{y}^{\b \g} \drho(y)}{\jap{x-y}^\b}.
\]
Let us split the integral into three regions as before: $|y| <\frac12 |x|$, $|y| > 2 |x|$, and $\frac12|x| \leq |y| \leq 2|x|$. We have
\begin{equation*}\label{}
\begin{split}
\int_{|y| <\frac12 |x|} \frac{ \jap{y}^{\b \g} \drho(y)}{\jap{x-y}^\b} & \lesssim \frac{1}{\jap{x}^{(1-\g)\b}} \\
\int_{|y| > 2 |x|} \frac{\jap{y}^{\b \g}\drho(y)}{\jap{x-y}^\b} & \lesssim \int_{|y| > 2 |x|} \frac{\drho(y)}{\jap{y}^{(1-\g)\b}} \leq \frac{1}{\jap{x}^{(1-\g)\b}} \\
\int_{ \frac12|x| \leq |y| \leq 2|x| } \frac{\jap{y}^{\b \g} \drho(y)}{\jap{x-y}^\b} & \leq \jap{x}^{\b \g}\int_{ \frac12|x| <|y|<2|x| } \drho(y) \lesssim  \frac{1}{\jap{x}^{(1-\g)\b}} \int_{ \frac12|x| <|y|<2|x| } \jap{y}^\b \drho(y) \leq \frac{\|\rho\|^{\b/q}_{\cP_q}}{\jap{x}^{(1-\g)\b}} .
\end{split}
\end{equation*}
So, we obtain
\[
\left( \frac{\rho}{(\rho \ast \phi)^\g} \right) \ast \phi(x)  \lesssim \frac{\|\rho\|_{\cP_q}^{(1+\g) \b/q} } {\jap{x}^{(1-\g)\b}}  \lesssim ( \rho \ast \phi(x) )^{1-\g} \|\rho\|_{\cP_q}^{2 \b/q}.
\]

Finally, as to \eqref{e:KMTHS}, we can assume that both $\g_i \geq 0$, for otherwise we can ignore the corresponding term $\rho \ast \phi$ and just assume that $\g_i = 0$. We have
\begin{equation*}\label{}
\begin{split}
\int_{\O \times \O}  \frac{\phi^2(x-y)}{(\rho \ast \phi(x))^{2\g_1}(\rho \ast \phi(y))^{2\g_2}} \drho(y) \drho(x) \lesssim  \|\rho\|_{\cP_q}^{2(\g_1+ \g_2)\b/q} \int_{\O \times \O}  \frac{\jap{x}^{2\g_1 \b}\jap{y}^{2\g_2 \b}}{\jap{x-y}^{2\b}} \drho(y) \drho(x)
\end{split}
\end{equation*}
Splitting the integral into three regions as before, we obtain since $\g_1+ \g_2 \leq 1$,
\begin{equation*}\label{}
\begin{split}
\int_{|y| < \frac12 |x| }  \frac{\jap{x}^{2\g_1 \b}\jap{y}^{2\g_2 \b}}{\jap{x-y}^{2\b}} \drho(y) \drho(x) &\lesssim \int_{|y| < \frac12 |x| }  \frac{\jap{x}^{2(\g_1+\g_2) \b}}{\jap{x}^{2\b}} \drho(y) \drho(x)  \leq 1 \\
\int_{|x| < \frac12 |y| }  \frac{\jap{x}^{2\g_1 \b}\jap{y}^{2\g_2 \b}}{\jap{x-y}^{2\b}} \drho(y) \drho(x) &\lesssim \int_{|x| < \frac12 |y| }  \frac{\jap{y}^{2(\g_1+\g_2) \b}}{\jap{y}^{2\b}} \drho(y) \drho(x)  \leq 1 \\
\int_{\frac12|x| \leq |y| \leq 2 |x| }  \frac{\jap{x}^{2\g_1 \b}\jap{y}^{2\g_2 \b}}{\jap{x-y}^{2\b}} \drho(y) \drho(x) & \leq \int_{\frac12|x| \leq |y| \leq 2 |x| } \jap{x}^{2(\g_1+\g_2) \b} \drho(y) \drho(x) \\
&\leq \int_{\O\times \O} \jap{y}^{(\g_1+\g_2) \b} \drho(y) \jap{x}^{(\g_1+\g_2) \b} \drho(x) \\
& \leq  \|\rho\|_{\cP_q}^{2\b/q}.
\end{split}
\end{equation*}
We arrive at 
\[
\int_{\O \times \O}  \frac{\phi^2(x-y)}{(\rho \ast \phi(x))^{2\g_1}(\rho \ast \phi(y))^{2\g_2}} \drho(y) \drho(x) \lesssim \|\rho\|_{\cP_q}^{4\b/q}.
\]

\end{proof}

\subsection{Bochner-positive  $\g$-protocols}  Another way to symmetrize the Motsch-Tadmor model is through a Bochner-type representation, which also makes the communication  positive semi-definite. This model was first proposed in \cite{Shv-hypo}. We extend it to arbitrary scaling on $\rho$. 

We adopt the model to the periodic and open space cases differently. Let us consider $\O = \T^n$ first.

Let $\g \leq 1$, and let $\psi\in C(\O)$, $\psi \geq 0$, be an even kernel $\psi(x) = \psi(-x)$. Let $\cF = L^1(\O) \cap \cP(\O)$. Consider
\begin{equation}\label{Cg+T}\tag{$\cC_{\g}^+(\T^n)$}
\begin{split}
\phi_\rho(x,y) &= \int_{\O} \frac{\psi(x-z)\psi(y-z)}{(\rho \ast \psi(z))^{\g}}\dz, \\
 \th_\rho(x,y)  & = \begin{cases} \inf_{z \in (x+ \supp \psi) \cap (y + \supp \psi)} \rho \ast \psi(z), \text{ if }  (x + \supp \psi) \cap (y  + \supp \psi) \neq \varnothing \\
\|\psi\|_\infty, \text{ if }  (x +  \supp \psi) \cap (y + \supp \psi) = \varnothing. \end{cases}
\end{split}
\end{equation}
The corresponding communication operator is given by the overmollified Favre filtration 
\begin{equation}\label{ }
\bc_\rho(u) =\left(\frac{(u\rho)\ast \psi}{(\rho \ast \psi)^\g}\right)\ast \psi, \quad \st_\rho = ((\rho \ast \psi)^{1-\g})\ast \psi,
\end{equation}
and it is positive semi-definite
\[
(\bc_\rho(u),u)_\rho = \int_\O \frac{((u\rho) \ast \psi)^2}{(\rho \ast \psi)^\g} \dx \geq 0.
\]
Clearly, \ref{i:f2} - \ref{i:f3} hold on any environment due to $\psi \in L^\infty$. As to the Hilbert-Schmidt property  \ref{i:f1},  
\begin{equation*}\label{}
\begin{split}
\int_{\O \times \O} \phi^2_\rho(x,y) \drho(y) \drho(x) & \leq \int_{\O \times \O} \int_{\O} \frac{\psi(x-z)\psi(y-z)}{(\rho \ast \psi(z))^{2 \g}}\dz \drho(y) \drho(x) \\
&\leq  \int_{\O} (\rho \ast \psi(z))^{2-2 \g}\dz  \leq \|\psi\|_\infty^{2-2\g} |\O|.
\end{split}
\end{equation*}
Conditions \ref{i:b1} - \ref{i:b2} are also trivial to verify.

The reason why we are not allowed to use general probability measure flocks $\cP$ in this protocol (unless of course $\supp \psi = \O$) is because of the failure of \ref{i:th1}.
\begin{example}
Suppose the support of $\psi$ is exactly the unit ball $B_1(0)$ and $\psi(x)>0$ for $|x|<1$. Let $\rho = \d_0$. We have $\th_\rho(0,0) = \inf_{z \in \supp \psi}\psi(z) = 0$. So, in this case $\rho \otimes \rho ( \{\th_\rho = 0 \} ) > 0$. 
\end{example}

To verify axioms of thickness, we note that \ref{i:th2} and \ref{i:th3} are trivial. Let us note that the map $x \to x + \supp \psi$ is upper semi-continuous, i.e. for every open $x_0 + \supp \psi \ss U$ there exists open $x_0 \in V$ such that $y + \supp \psi \ss U$ for all $y\in V$. To the check lower semi-continuity of $\th_\rho$ let us fix a point $(x,y)$ such that $(x + \supp \psi) \cap (y + \supp \psi) \neq \varnothing$ (otherwise there is nothing to check). Call $Z_{x,y} = (x + \supp \psi) \cap (y + \supp \psi)$. Then $\th_\rho(x,y)  = \inf_{z\in Z_{x,y}} \rho \ast \psi(z)$. For any $\e>0$, by continuity, there exists an open nightborhood $U \supset Z_{x,y}$ such that $\inf_{z \in U} \rho \ast \psi(z) > \th_\rho(x,y) - \e$. Note that the sets $(x + \supp \psi) \backslash U$, $(y + \supp \psi )\backslash U$ are closed and disjoint. So, there exist two open disjoint neighborhoods $U_x \supset x +\supp \psi$ and $U_y \supset y + \supp \psi$, $U_x \cap U_y = \emptyset$. Note that $U \cup U_x$ covers $x + \supp \psi$ while $U \cup U_y$ covers $y + \supp \psi$. By upper semicontinuity observed earlier, there exist two open neighborhoods $x\in V_x$, $y \in V_y$ such that for all $\tilde{x} \in V_x$, $\tilde{y} \in V_y$ we have $\tilde{x} + \supp \psi \ss U \cup U_x$ and  $\tilde{y} + \supp \psi \ss U \cup U_y$. Since $U_x$ and $U_y$ are disjoint, we have $(\tilde{x} + \supp \psi ) \cap (\tilde{y} + \supp \psi ) \ss U$, and hence, $\inf_{z \in (\tilde{x} + \supp \psi ) \cap (\tilde{y} + \supp \psi )} \rho \ast \psi(z) > \th_\rho(x,y) - \e$. This proves that $\liminf_{(x',y') \to (x,y)} \th_\rho(x',y') \geq \th_\rho(x,y)$.

It remains to check \ref{i:th1}.  For that property to hold we need to assume that the support of $\psi$ retains a positive capacity at the boundary:
\begin{equation}\label{e:cap}
\exists r_0>0, \ 0<\a<1 \text{ such that } \forall x\in \p \supp \psi, \ r<r_0:  \quad |B_r(x) \cap \{\psi>0\}| \geq \a |B_r(x)|.
\end{equation}

 Let us fix $\rho \in L^1(\O)$. Let us assume on the contrary that $\rho \otimes\rho ( \{\th_\rho = 0 \} ) > 0$. Then there exists a $\e>0$ such that $\int_{\rho \geq \e}\int_{\rho \geq \e} \one_{\{\th_\rho = 0\}}(x,y) \rho(y) \dy \rho(x) \dx >0$.  Let us fix a $\d \ll \e$ and consider the sum
\[
\int_{\rho \geq \e} \one_{\{\th_\rho = 0\}}(x) \rho(x) \dx = \sum_{l=0}^\infty \sum_{k=0}^\infty \int_{\e + (k+1) \d \geq \rho \geq \e + k \d} \int_{\e + (l+1) \d \geq \rho \geq \e + l \d}  \one_{\{\th_\rho = 0\}}(x,y) \rho(y) \dy \rho(x) \dx >0.
\]
So, there will exist a pair $k , l \geq 0$ such that 
\[
 \int_{\e + (k+1) \d \geq \rho \geq \e + k \d} \int_{\e + (l+1) \d \geq \rho \geq \e + l \d}  \one_{\{\th_\rho = 0\}}(x,y) \rho(y) \dy \rho(x) \dx >0.
\]
So, the Lebesgue measure of the set where the function
\[
g(x,y) = \one_{\{\e + (k+1) \d \geq \rho \geq \e + k \d\}} (x) \one_{\{\e + (l +1) \d \geq \rho \geq \e + l \d\}} (y) \one_{\{\th_\rho = 0\}}(x,y) \rho(x) \rho(y)
\]
is positive is positive. Applying the Lebesgue Differentiation Theorem to the function 
we find a point $(x,y)$ (in fact almost all points on the support) such that 
\begin{equation}\label{e:g>0}
\lim_{r \to 0} \frac{1}{|B_r(x)| |B_r(y)|} \int_{B_r(x) \times B_r(y)} g(x',y') \dx' \dy'  = g(x,y) >0.
\end{equation}
In particular, $\th_\rho(x,y) = 0$, and $\rho(x) \geq \e + k \d$, $\rho(y) \geq \e + l \d$. So,
\[
\lim_{r \to 0} \frac{1}{|B_r(x)| |B_r(y)|} \int_{B_r(x) \times B_r(y)} g(x',y') \dx' \dy'  \geq (\e + k \d)(\e+ l \d).
\]
Yet, since, $\th_\rho(x,y) = 0$, by continuity there exists a $z\in Z_{x,y}$  such that $\rho \ast \psi(z)  = 0$. If in fact $\psi(z-x)>0$, then by continuity $\psi(z-x') \geq  a>0$ for $x'\in B_r(x)$, and hence, $\rho(B_r(x)) = 0$ for some small $r$ and this leads immediately to a contradiction with \eqref{e:g>0}.  At the same time  $z - x \in \supp \psi$ by the choice of $z$. So, necessarily $z\in x+\p \supp \psi$.   Since $\rho \ast \psi(z)  = 0$, we have
\[
\int_{x' \in B_r(z-x) \cap \{\psi>0\}} \rho(z-x')\psi(x') \dx' = 0,
\]
for all $r>0$. Thus, $\rho(z -  B_r(z-x) \cap \{\psi>0\}) = 0$. Since $z- x\in \p \supp \psi$, by our capacity assumption on the support, $|B_r(z-x) \cap \{\psi>0\}| \geq \a |B_r|$. Since $z -  B_r(z-x) \cap \{\psi>0\} = B_r(x) \cap (z + \supp \psi)$, we found a subset  of the ball $B_r(x)$, namely $A_r(x) = B_r(x) \cap (z + \supp \psi)$ of measure at least $\a |B_r(x)|$ on which $\rho = 0$ a.e.
Consequently, we have
\begin{equation*}\label{}
\begin{split}
(\e + k \d)(\e+ l \d) & \leq \lim_{r \to 0} \frac{1}{|B_r(x)| |B_r(y)|} \int_{B_r(x) \times B_r(y)} g(x',y') \dx' \dy'  \\
& =\lim_{r \to 0} \frac{1}{|B_r(x)| |B_r(y)|} \int_{B_r(x) \backslash A_r(x)} \int_{B_r(y)} g(x',y') \dx' \dy' \\
& \leq   \lim_{r \to 0} \frac{(\e + (k+1) \d)(\e + (l+1) \d)}{|B_r(x)|} (1-\a) |B_r(x)|\\
& \leq (1-\a) (\e + (k+1) \d)(\e + (l+1) \d).
\end{split}
\end{equation*}
This is a contradiction if $\d$ is chosen small enough.

\begin{lemma}\label{}
 \ref{Cg+T} defines a bounded protocol on  $\cF = L^1(\T^n) \cap \cP(\T^n)$ for any even kernel $\psi\in C(\O)$, $\psi \geq 0$ satisfying the capacity condition \eqref{e:cap}.
\end{lemma}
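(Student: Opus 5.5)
The plan is to verify, one at a time, the axioms of \defin{d:protocol} for the kernel and thickness of \ref{Cg+T} on $\cF = L^1(\T^n)\cap\cP(\T^n)$. Most of this work was carried out in the discussion just before the lemma, so the proof mainly assembles those pieces and supplies the remaining details for \ref{i:b1}--\ref{i:b2} and \ref{i:th2}--\ref{i:th3}.

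\textbf{Kernel axioms.} First I check the kernel axioms \ref{i:f1}--\ref{i:f3}. Since $\g\le 1$ and $\rho\ast\psi\le \|\psi\|_\infty$, the strength is
\[
\st_\rho = \bigl((\rho\ast\psi)^{1-\g}\bigr)\ast\psi \le \|\psi\|_\infty^{1-\g}\|\psi\|_{L^1}.
\]
By symmetry of $\phi_\rho$, the adjoint strength $\st^*_\rho$ equals $\st_\rho$ and obeys the same bound. For \ref{i:f1}, I bound $\phi_\rho^2$ by Cauchy--Schwarz in $z$:
\[
\phi_\rho(x,y)^2 \le \|\psi\|_{L^1}\int \frac{\psi(x-z)^2\psi(y-z)^2}{(\rho\ast\psi(z))^{2\g}}\dz \le \|\psi\|_{L^1}\|\psi\|_\infty^2\int \frac{\psi(x-z)\psi(y-z)}{(\rho\ast\psi(z))^{2\g}}\dz .
\]
Integrating against $\rho\otimes\rho$ then gives $\int (\rho\ast\psi)^{2-2\g}\dz \le \|\psi\|_\infty^{2-2\g}|\T^n|$, as in the displayed computation. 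When $\g<0$ the denominators are harmless, since then $(\rho\ast\psi)^{-\g}\le \|\psi\|_\infty^{-\g}$.

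\textbf{Boundedness axioms.} For \ref{i:b1}, the integrand of $\phi_\rho(x,y)$ vanishes unless $z\in Z_{x,y}$, where the denominator is at least $\th_\rho(x,y)^\g$. Hence $\phi_\rho(x,y)\le \|\psi\|_\infty\|\psi\|_{L^1}\,\th_\rho(x,y)^{-\g}$ when $\g\ge0$, and the bound is trivial when $\g<0$. When $Z_{x,y}=\varnothing$ we simply have $\phi_\rho=0$. For \ref{i:b2}, on $Z_{x,y}$ the mean value theorem gives
\[
|a^{-\g}-b^{-\g}|\le |\g|\min(a,b)^{-\g-1}|a-b|,
\]
with $a=\rho'\ast\psi(z)$ and $b=\rho''\ast\psi(z)$. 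Since $|a-b|\le \|\psi\|_\infty\|\rho'-\rho''\|_\TV$ and $\min(a,b)\ge\min(\th_{\rho'},\th_{\rho''})$, integrating over $z$ yields the required Lipschitz bound.

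\textbf{Thickness axioms.} Uniform boundedness follows from $\rho\ast\psi\le\|\psi\|_\infty$. Property \ref{i:th3} holds because
\[
|\inf_Z \rho'\ast\psi-\inf_Z\rho''\ast\psi|\le \sup_Z|(\rho'-\rho'')\ast\psi|\le \|\psi\|_\infty\|\rho'-\rho''\|_\TV .
\]
Property \ref{i:th2} on the compact torus only requires that $\inf\rho>0$ imply $\inf\th_\rho>0$. This holds since $\rho\ast\psi\ge (\inf\rho)\|\psi\|_{L^1}>0$, assuming $\psi\not\equiv0$. Lower semicontinuity and \ref{i:th1} are exactly the two arguments given above: the first uses the upper semicontinuity of $x\mapsto x+\supp\psi$ together with separation of compact sets, and the second is a Lebesgue-differentiation argument that uses the capacity condition \eqref{e:cap}. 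I would simply cite them.

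\textbf{Main obstacle.} The delicate step is \ref{i:th1}, because it needs \eqref{e:cap} to exclude degenerate boundary points of $\supp\psi$, where $\rho\ast\psi(z)=0$ could coexist with $\rho$ having positive density near $x$. This is already handled by the contradiction argument above. A secondary point needing care is the case $Z_{x,y}=\varnothing$ in \ref{i:b1}--\ref{i:b2}, where $\th_\rho=\|\psi\|_\infty$ by convention and both kernels vanish, so these axioms hold trivially there.
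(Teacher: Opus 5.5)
\textbf{Gap in \ref{i:th1}.} You hand \ref{i:th1} off to the contradiction argument that precedes the lemma. That argument, and hence the paper's proof, has a hole. From $z\in Z_{x,y}=(x+\supp\psi)\cap(y+\supp\psi)$, $\rho\ast\psi(z)=0$ and $\psi(z-x)=0$, it concludes that $z-x\in\p\supp\psi$. This tacitly assumes $\psi>0$ on $\operatorname{int}\supp\psi$, which is not a hypothesis, and \eqref{e:cap} says nothing about zeros of $\psi$ inside its support.

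Without that assumption the statement is false. On $\T^n=\R^n/\mathbb{Z}^n$, take a compact $K=-K\subset B_{1/8}$ with $|K|>0$ and empty interior (for instance a product of symmetric fat Cantor sets), and set $\psi(w)=\mathrm{dist}(w,K)\,(1/4-|w|)_+$. This $\psi$ is continuous, even and nonnegative, with $\supp\psi=\overline{B_{1/4}}$. Condition \eqref{e:cap} holds on $\p\supp\psi=\{|w|=1/4\}$ with $r_0=1/16$ and $\a=2^{-n}$. Now take $\rho=|K|^{-1}\one_{z_0-K}\in\cF$. Then $\rho\ast\psi(z_0)=|K|^{-1}\int_K\psi\dz=0$, while $z_0\in Z_{x,y}$ for all $x,y\in z_0-K$. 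Hence $\th_\rho=0$ on a set of full $\rho\otimes\rho$-measure, and \ref{i:th1} fails.

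What is missing is the correct hypothesis: require \eqref{e:cap} at every point of $\p\{\psi>0\}=\supp\psi\setminus\{\psi>0\}$. For instance, assume $\psi>0$ on $\operatorname{int}\supp\psi$, in which case $\p\{\psi>0\}=\p\supp\psi$ and the original condition suffices. With this hypothesis the argument closes:
\begin{itemize}
\item If $\th_\rho(x,y)=0$, compactness of $Z_{x,y}$ gives some $z$ with $\rho\ast\psi(z)=0$.
\item Then $\rho=0$ a.e.\ on $B_r(x)\cap(z-\{\psi>0\})$, and this set has measure at least $\a|B_r|$ for all small $r$.
\item So $x$ is not a Lebesgue point of $\rho$ with $\rho(x)>0$, which gives $\{\th_\rho=0\}\subset N\times\T^n$ with $\rho(N)=0$.
\end{itemize}

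\textbf{Remaining steps.} Otherwise your proposal follows the paper's route. Your explicit checks of \ref{i:b1}--\ref{i:b2} and \ref{i:th2}--\ref{i:th3}, which the paper calls trivial, are correct up to two slips.
\begin{itemize}
\item In the Cauchy--Schwarz step for \ref{i:f1}, the constant $\|\psi\|_{L^1}$ is wrong; the inequality is not invariant under $\psi\mapsto c\psi$ and fails for small $c$. Use $|\T^n|$ instead, or the weighted form $\phi_\rho(x,y)^2\le\|\psi\|_\infty\|\psi\|_{L^1}\int\psi(x-z)\psi(y-z)(\rho\ast\psi(z))^{-2\g}\dz$.
\item In \ref{i:b2} with $\g<-1$, the derivative of $t\mapsto t^{-\g}$ is increasing. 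The mean value bound must therefore use $\max(a,b)^{-\g-1}\le\|\psi\|_\infty^{-\g-1}$ rather than $\min(a,b)^{-\g-1}$.
\end{itemize}
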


In the open space, $\O = \R^n$, we assume  $\psi \in L^\infty(\R^n)$ satisfies
\begin{equation}\label{e:psib}
\frac{\l}{\jap{x}^\b} \leq \psi(x) \leq \frac{\L}{\jap{x}^\b}, \text{ for some } \b >n.
\end{equation}
Let $\cF = \cF_q$ be as chosen for $\g$-protocols, but with $q \geq 2\b$. Define the protocol by
\begin{equation}\label{Cg+R}\tag{$\cC_{\g}^+(\R^n)$}
\begin{split}
\phi_\rho(x,y) &= \int_{\O} \frac{\psi(x-z)\psi(y-z)}{(\rho \ast \psi(z))^{\g}}\dz, \\
 \th_\rho(x,y)  & = \frac{1}{\jap{x}^\b \vee \jap{y}^\b}.
\end{split}
\end{equation}
The choice of thickness will be clear in a moment, after we estimate the kernel. 

By \eqref{e:asymrhof}, 
\begin{equation}\label{e:phiopen}
\phi_\rho(x,y) \leq C \|\rho\|^{\g \b/q}_{\cF_q} \int_{\R^n} \frac{\jap{z}^{\g \b}}{\jap{x-z}^\b \jap{y-z}^\b} \dz.
\end{equation}
Note that since $\b>n$ and $\g \leq 1$, the integral converges. We now prove the following estimate 
\begin{equation}\label{ }
\phi_\rho(x,y) \leq C \|\rho\|^{\g \b/q}_{\cF_q}\jap{x}^\b \vee \jap{y}^\b.
\end{equation}
Indeed, by symmetry, we can assume that $|x| \leq |y|$. Then splitting the integral we obtain
\begin{equation*}\label{}
\begin{split}
\int_{\R^n} \frac{\jap{z}^{\g \b}}{\jap{x-z}^\b \jap{y-z}^\b} \dz & = \int_{|z|<2|y|} \frac{\jap{z}^{\g \b}}{\jap{x-z}^\b \jap{y-z}^\b} \dz + \int_{|z| \geq 2|y|} \frac{\jap{z}^{\g \b}}{\jap{x-z}^\b \jap{y-z}^\b} \dz \\
& \lesssim \jap{y}^{\g\b} \int_{\R^n} \frac{\dz}{\jap{z}^\b} +  \int_{|z| \geq 2|y|}\frac{ \jap{z}^{\g \b} }{\jap{z}^{2\b}}\dz \\
& \lesssim \jap{y}^{\b} + \jap{y}^{\g\b+n - 2\b} \lesssim \jap{y}^{\b}.
\end{split}
\end{equation*}
Plugging this back into \eqref{e:phiopen} we obtain
\[
\int_{\O \times \O} \phi^2_\rho(x,y) \drho(y) \drho(x) \leq C \|\rho\|^{2\g \b/q}_{\cF_q}  \int_{\O \times \O} (\jap{x}^{2\b} + \jap{y}^{2\b} )\drho(y) \drho(x) \leq C \|\rho\|^{2(1+\g) \b/q}_{\cF_q}.
\]

\begin{lemma}\label{}
 \ref{Cg+R} defines a bounded protocol on  $\cF_q$ for $q \geq 2\b$ and $\b > n$, with $\psi \in L^\infty$ satisfying \eqref{e:psib}. 
\end{lemma}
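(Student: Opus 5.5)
We need to verify that \ref{Cg+R} satisfies every axiom of \defin{d:protocol} on $\cF_q$, with $q\geq 2\b$ and $\b>n$. The Hilbert--Schmidt bound \ref{i:f1} was already obtained just above the statement from the pointwise estimate $\phi_\rho(x,y)\lesssim \|\rho\|^{\g\b/q}_{\cF_q}\,\jap{x}^\b\vee\jap{y}^\b$. Integrating $\jap{x}^{2\b}+\jap{y}^{2\b}$ against $\rho\otimes\rho$ requires a $2\b$-moment, and this is why $q\geq 2\b$ is imposed. It remains to check the Schur bounds, the thickness axioms, and \ref{i:b1}--\ref{i:b2}.

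\textbf{Schur bounds \ref{i:f2}--\ref{i:f3}.} Since $\phi_\rho$ is symmetric, it suffices to bound
\[
\st_\rho(x)=\int_{\R^n}\psi(x-z)\,(\rho\ast\psi(z))^{1-\g}\dz .
\]
For $0\leq \g\leq 1$, use $\rho\ast\psi\leq \|\psi\|_\infty$ to get $\st_\rho\leq \|\psi\|_\infty^{1-\g}\|\psi\|_{L^1}$, which is finite because $\b>n$. For $\g<0$, apply the upper bound in \eqref{e:asymrhof} with $\psi$ in place of $\phi$; this gives $(\rho\ast\psi)^{1-\g}\lesssim \|\rho\|^{(1-\g)\b/q}\jap{z}^{-(1-\g)\b}$, which is bounded, so again $\st_\rho\lesssim\|\psi\|_{L^1}$. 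For $0<\g\leq1$, the lower bound in \eqref{e:asymrhof} is also needed in \eqref{e:phiopen}. Both directions of \eqref{e:asymrhof} hold verbatim for $\psi$, since that proof used only the power bounds and the $q$-moment with $q\geq\b$.

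\textbf{Thickness.} Take $\th_\rho=(\jap{x}^\b\vee\jap{y}^\b)^{-1}$. This function is continuous, takes values in $(0,1]$, and is independent of $\rho$. Therefore \ref{i:th1} holds because $\th_\rho$ never vanishes, \ref{i:th3} holds trivially with zero difference, and \ref{i:th2} holds with $\inf_{B_R\times B_R}\th_\rho=\jap{R}^{-\b}$.

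\textbf{Boundedness \ref{i:b1}.} The pointwise estimate reads $\phi_\rho\leq C\|\rho\|_{\cF_q}^{\g\b/q}\,\th_\rho^{-1}$, which is exactly of the form required in \ref{i:b1}. For $\g\leq 0$ the same splitting of the $z$-integral applies with $\jap{z}^{\g\b}\leq 1$, so it gives the bound $\jap{x}^\b\vee\jap{y}^\b$ or better.

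\textbf{Continuity \ref{i:b2}; the main obstacle.} By the mean value theorem,
\[
|(\rho'\ast\psi)^{-\g}-(\rho''\ast\psi)^{-\g}|\leq |\g|\,\xi^{-\g-1}\,\|\psi\|_\infty\|\rho'-\rho''\|_{\TV},
\]
where $\xi$ lies between $\rho'\ast\psi(z)$ and $\rho''\ast\psi(z)$. The lower bound in \eqref{e:asymrhof} gives $\xi\gtrsim m\jap{z}^{-\b}$, where $m$ depends on $\|\rho'\|,\|\rho''\|$. For $\g>0$ this yields $\xi^{-\g-1}\lesssim\jap{z}^{(1+\g)\b}$, and this growth ruins the convergence of the $z$-integral, since $\int\jap{z}^{(1+\g)\b}\jap{x-z}^{-\b}\jap{y-z}^{-\b}\dz$ diverges.

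The fix is to avoid the crude TV bound in the far region. For large $|z|$, write
\[
|\rho'\ast\psi-\rho''\ast\psi|(z)\leq \int \psi(z-w)\,\mathrm{d}|\rho'-\rho''|(w),
\]
and split $w$ into $|w|<|z|/2$ and its complement. The first piece is $\lesssim\jap{z}^{-\b}\|\rho'-\rho''\|_{\TV}$. The complement is controlled by the tails of $\rho'$ and $\rho''$; to make it proportional to $\|\rho'-\rho''\|_{\TV}$, use the $q$-moments of $\rho',\rho''$ together with interpolation. This reduces the growth of the factor to $\jap{z}^{\g\b}$, for which the convergent $z$-integral estimate already established applies.

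For $\g\leq0$ no lower bound is needed: $\xi^{-\g-1}$ is controlled by $\xi\leq\|\psi\|_\infty$ when $-\g-1\geq0$, while the case $-1<\g<0$ uses the lower bound only with a small power. In every case the resulting constant depends on $\th_{\rho'},\th_{\rho''}$ (through $\jap{x}\vee\jap{y}$) and on the norms $\|\rho'\|_{\cF_q},\|\rho''\|_{\cF_q}$, which is the form \ref{i:b2} allows.

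I expect this far-field bookkeeping in \ref{i:b2} for $\g>0$ to be the only delicate step. If the interpolation fails to produce a constant linear in $\|\rho'-\rho''\|_{\TV}$, I would instead absorb the extra polynomial weight into a weaker thickness, for example $\th_\rho=(\jap{x}\vee\jap{y})^{-2\b}$, which is still admissible for \ref{i:b1}.
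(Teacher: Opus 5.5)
\textbf{Gap in your verification of \ref{i:b2}.} Your checks of \ref{i:f1}--\ref{i:f3}, \ref{i:th1}--\ref{i:th3} and \ref{i:b1} are correct and agree with the paper. The paper's argument is exactly the pointwise bound $\phi_\rho\lesssim\|\rho\|_{\cF_q}^{\g\b/q}\,\jap{x}^\b\vee\jap{y}^\b$ from \eqref{e:asymrhof}, integrated against $\rho\otimes\rho$ using $q\ge 2\b$. Your case split in the Schur bound is unnecessary, since $1-\g\ge0$ already gives $(\rho\ast\psi)^{1-\g}\le\|\psi\|_\infty^{1-\g}$.

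The paper does not write out \ref{i:b2} for this protocol, and your attempt to supply it has a genuine gap. Reducing the growth to $\jap{z}^{\g\b}$ requires the pointwise bound $|(\rho'-\rho'')\ast\psi(z)|\lesssim\jap{z}^{-\b}\|\rho'-\rho''\|_{\TV}$ on bounded sets of $\cF_q$, and this bound is false. Take $\rho'=(1-\eta)\rho_0+\eta\nu_{w_0}$ and $\rho''=(1-\eta)\rho_0+\eta\nu_{-w_0}$, where $\nu_w$ is a normalized unit bump at $w$, $|w_0|=L$, and $\eta\jap{L}^q\le1$. Both norms stay bounded, yet $|(\rho'-\rho'')\ast\psi(w_0)|\gtrsim\l\eta\sim\l\|\rho'-\rho''\|_{\TV}$, with $L$ arbitrary.

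Moment tails plus interpolation can only give $\|\rho'-\rho''\|_{\TV}^{\vartheta}\jap{z}^{-q(1-\vartheta)}$, which is a H\"older dependence, not a Lipschitz one. Your fallback of a weaker thickness changes nothing. The constant in \ref{i:b2} is already an arbitrary function of $\th$, and both candidate thicknesses are functions of $\jap{x}\vee\jap{y}$. The obstruction is a divergence in $z$ at fixed $(x,y)$, which no reweighting in $(x,y)$ can absorb. Incidentally, the crude bound diverges only when $(1-\g)\b\le n$, not for every $\g>0$.

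\textbf{The fix: integrate in $z$ first.} Do not estimate $(\rho'-\rho'')\ast\psi$ pointwise at all. Set $\xi(z)=\min\{\rho'\ast\psi(z),\rho''\ast\psi(z)\}$; by \eqref{e:asymrhof}, $\xi(z)\gtrsim m\jap{z}^{-\b}$ with $m$ depending on $\|\rho'\|_{\cF_q},\|\rho''\|_{\cF_q}$. For $0<\g\le1$, Tonelli gives
\[
|\phi_{\rho'}(x,y)-\phi_{\rho''}(x,y)|\le\g\int_{\R^n}\Big(\int_{\R^n}\psi(x-z)\psi(y-z)\,\xi(z)^{-\g-1}\,\psi(z-w)\dz\Big)\,\mathrm{d}|\rho'-\rho''|(w).
\]
Peetre's inequality $\jap{x-z}^{-\b}\le 2^{\b/2}\jap{x}^{\b}\jap{z}^{-\b}$ then yields
\[
\psi(x-z)\psi(y-z)\,\xi(z)^{-\g-1}\lesssim m^{-1-\g}\jap{x}^\b\jap{y}^\b\jap{z}^{(\g-1)\b}\le m^{-1-\g}\,\th^{-2}.
\]
Hence the inner integral is at most $C\,m^{-1-\g}\th^{-2}\|\psi\|_{L^1}$ uniformly in $w$, where $\b>n$ gives $\psi\in L^1$. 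This is \ref{i:b2} with an admissible constant. The factor $\psi(z-w)$ that you discarded in the far region is exactly what makes the $z$-integral converge. Your treatment of $\g\le0$ is fine.
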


\subsection{Segregation protocols} 

Another distinctly different and non-Galilean invariant protocol was introduced in \cite{Shv-EA} as a consensus model in segregated communities.  Let $\O = \T^n$. Consider any smooth partition of unity $g_l \in C(\O)$, $g_l \geq 0$, and $\sum_{l=1}^L g_l = 1$ subordinated to an open cover $\{ \cO_l \}_{l=1}^L$ of $\O$,  so that $\supp g_l \ss \cO_l$. We assume that $|\p \supp g_l | = 0$ for all $l$. Let in this case $\cF = L^1(\O)\cap \cP(\O)$. For a density $\rho\in \cP(\O)$ we denote $\rho(g_l) = \int_\O g_l \drho$. One defines the protocol by   
\begin{equation}\label{Segg}  \tag{Seg$_\g^+$}
\begin{split}
\phi_\rho(x,y) & =  \sum_{l = 1}^L \frac{ g_l(x) g_l(y)}{(\rho(g_l))^\g},\\
  \th_\rho(x,y) & = \begin{cases} \min_{ l:  x,y \in \supp g_l } \rho(g_l), \text{ if }  \exists l: x,y \in \supp g_l \\
 \|g_\cdot(\cdot)\|_\infty,  \text{ otherwise } . \end{cases}
 \end{split} 
\end{equation}
The superscript '$+$' designate that the model is positive semi-definite too.  The protocol is symmetric and the strength is given by $ \st_\rho(x) = \sum_{l=1 }^L g_l(x) \rho^{1-\g}(g_l)$. The conditions \ref{i:f2} - \ref{i:f3} are trivial. As to the Hilbert-Schmidt condition \ref{i:f1},
\[
\int_{\O \times \O} \phi^2_\rho(x,y) \drho(y) \drho(x) \leq C  \sum_{l = 1}^L \int_{\O \times \O} \frac{ g_l(x) g_l(y)}{(\rho(g_l))^{2\g}}\drho(y) \drho(x)  = C  \sum_{l = 1}^L \frac{ \rho(g_l)^2}{(\rho(g_l))^{2\g}} \leq C'.
\]

The only non-trivial properties to check here are the lower semi-continuity and \ref{i:th1}.  Let us fix a pair $(x,y)$. If $x$, $y$ or both are outside of any single $\supp g_l$, then due their finite number, then there exist open balls $B_r(x)$, $B_r(y)$ that fulfill the same property. Hence, $\th_\rho(B_r(x),B_r(y)) =  \|g_\cdot(\cdot)\|_\infty$. If $x,y \in \supp g_l$ for some $l$, let us denote by $k_1,...,k_p$ the indexes $k$ for which either $x\not \in \supp g_{k}$ or  $y \not \in \supp g_{k}$. Since the supports are compact, there is $\e>0$ such that with $B_\e(x) \cap \supp g_k = \emptyset$ or $B_\e(y) \cap \supp g_k = \emptyset$. This means that if $(x',y') \in B_\e(x) \times B_\e(y)$, then the pair $x',y'$ cannot belong to  $\supp g_{k_i}$ for any $i$. So, the collection of indexes for which $x',y' \in  \supp g_{l}$ will be smaller than or equal to that of the original pair $(x,y)$. Hence, the infimum will be at least that of $\th_\rho(x,y)$.

To show \ref{i:th1} suppose we have $\th_\rho(x,y) = 0$, hence there exists $l$ such that $x,y \in \supp g_l$ and $\rho(g_l)=0$. If $g_l(x) >0$, then $\rho(B_\e(x))=0$ for a small $\e$, hence $(x,y)\not\in \supp \rho \otimes \rho$. Similarly, if $g_l(y) >0$. Otherwise, $x, y\in \p (\supp g_l)$. So, 
\[
 \{ \th_\rho =0 \}  \ss( \O \times \O \backslash \supp \rho \otimes \rho ) \cup \p (\supp g_1) \times \p (\supp g_1) \cup \dots \cup \p (\supp g_L) \times \p (\supp g_L).
 \]
Since $\rho \in L^1$,  the $\rho \otimes \rho$-measure of the set on the right hand side is $0$.

\begin{lemma}\label{}
 \ref{Segg} defines a bounded protocol on  $\cF = L^1(\O)\cap \cP(\O)$, provided $g_l$'s satisfy the assumptions above. 
\end{lemma}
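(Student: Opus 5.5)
The plan is to verify each item of \defin{d:protocol} for \ref{Segg} directly on $\cF = L^1(\O)\cap\cP(\O)$, relying on the computations carried out just above for the kernel axioms and the thickness axioms. Throughout I use the convention that terms with $\rho(g_l)=0$ are dropped, which is consistent with the convention on quotients whose denominator vanishes. Under this convention the bound $\phi_\rho \le C(\th_\rho)$ is immediate. Indeed, if $g_l(x)g_l(y)>0$, then $x,y\in\supp g_l$, so $\rho(g_l)\ge \th_\rho(x,y)$. Each term is then at most $\|g_l\|_\infty^2 \th_\rho^{-\g}$ when $\g\ge 0$, and at most $\|g_l\|_\infty^2$ when $\g<0$, since $\rho(g_l)\le 1$. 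This gives \ref{i:b1}.

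The kernel axioms come next. Since $\rho(g_l)\le 1$, \ref{i:f2} follows from $\st_\rho=\sum_l g_l\rho(g_l)^{1-\g}\le L$ for $\g\le1$. Symmetry of the kernel then gives \ref{i:f3}. The Hilbert--Schmidt bound \ref{i:f1} was already computed above. Uniform boundedness of $\th_\rho$ is clear, since $\rho(g_l)\le\|g_l\|_\infty$.

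For the thickness, lower semicontinuity and \ref{i:th1} were shown above. For \ref{i:th2}, I work on $\T^n$ and take $R'$ large enough that $B_{R'}$ covers the torus. If $\inf\rho=m>0$, then $\rho(g_l)\ge m\int g_l>0$ for each $l$, which gives a uniform positive lower bound $c(m)$. For \ref{i:th3}, the map $\rho\mapsto\rho(g_l)$ is $\|g_l\|_\infty$-Lipschitz in total variation. The minimum over a fixed index set of Lipschitz functions is Lipschitz, and the index set $\{l: x,y\in\supp g_l\}$ does not depend on $\rho$, so $|\th_{\rho'}-\th_{\rho''}|\le \max_l\|g_l\|_\infty\|\rho'-\rho''\|_\TV$.

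The remaining point is \ref{i:b2}, which I expect to be the only delicate step because of the power $\g$ and possibly vanishing denominators. For the indices $l$ that actually contribute at $(x,y)$, both $\rho'(g_l)\ge\th_{\rho'}(x,y)$ and $\rho''(g_l)\ge\th_{\rho''}(x,y)$. The function $s\mapsto s^{-\g}$ is Lipschitz on $[\min(\th_{\rho'},\th_{\rho''}),1]$, with constant depending only on the two thicknesses and on $\g$. Combined with the TV-Lipschitz bound on $\rho(g_l)$, this yields
\[
|\phi_{\rho'}(x,y)-\phi_{\rho''}(x,y)|\le C(\th_{\rho'},\th_{\rho''})\|\rho'-\rho''\|_\TV.
\]
One case needs separate care: a term with $\rho'(g_l)=0$ but $\rho''(g_l)>0$. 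In that case $\th_{\rho'}(x,y)=0$ whenever $x,y\in\supp g_l$, and I allow $C$ to be infinite at zero thickness, as the definition permits. With that, all axioms hold and the lemma follows.
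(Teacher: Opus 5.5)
Your route is the paper's. The text before the lemma verifies \ref{i:f1}, lower semicontinuity and \ref{i:th1}, and treats the remaining axioms as evident; your additional checks of \ref{i:b1}, \ref{i:b2}, \ref{i:th2}, \ref{i:th3} and of $\th_\rho\le\oTh$ are correct. Your reading that \ref{i:b1}--\ref{i:b2} impose nothing at zero thickness matches how they are used in \thm{t:weak}, where they are only invoked on sets where the thickness is bounded below.

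The one step that fails is the \ref{i:th1} verification you import from the text. The paper's inclusion for $\{\th_\rho=0\}$ rests on the claim that $x\in\supp g_l$ with $g_l(x)=0$ forces $x\in\p\supp g_l$. This is false: such an $x$ lies in $\p\{g_l>0\}=\supp g_l\setminus\{g_l>0\}$, which can be much larger than $\p\supp g_l$.

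Set $Z_l=\supp g_l\setminus\{g_l>0\}$ and suppose it has positive measure. Take $\rho=|Z_l|^{-1}\one_{Z_l}\in\cF$. Then $\rho(g_l)=0$, so $\th_\rho=0$ on $Z_l\times Z_l$, and $\rho\otimes\rho(\{\th_\rho=0\})=1$.

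A concrete instance: let $K\ss\T^1$ be a fat Cantor set, $g_1=\min\{\frac12,\mathrm{dist}(\cdot,K)\}$ and $g_2=1-g_1$. Both supports equal $\T^1$, so $|\p\supp g_l|=0$, yet $\th_\rho\equiv 0$ for $\rho=|K|^{-1}\one_K$. Smoothness of the $g_l$ does not help, since every closed set is the zero set of a $C^\infty$ function.

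So the lemma needs the hypothesis $|\p\{g_l>0\}|=0$ for all $l$, and the construction above shows this condition is also necessary for \ref{i:th1}. With it, the paper's inclusion holds with $\p\{g_l>0\}$ in place of $\p\supp g_l$, and your proof goes through unchanged.
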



\end{document}